\date{November 11th, 2013}
\newcommand{\gammaHat}{\widehat{\gamma}}
\newcommand{\rTilde}{\tilde{r}}
\newcommand{\polyLog}{\textrm{polyLog}}
\newcommand{\DPsiPrimeOneLessPred}{D_{\psi'(r_{\cdot,[p]})}}
\newcommand{\outerExp}[1]{\mathbf{E}^*\left(#1\right)}
\title{Asymptotic behavior of unregularized and ridge-regularized high-dimensional robust regression estimators : rigorous results}
\author{Noureddine El Karoui\\ Department of Statistics, UC Berkeley\thanks{I would like to thank Peter Bickel, Bin Yu and Derek Bean for many interesting discussions on high-dimensional robust regression estimators. I am especially grateful to Peter Bickel for many fascinating discussions that greatly improved my understanding of this topic. 
Support from NSF grant
DMS-0847647 (CAREER) is gratefully acknowledged.
\textbf{AMS 2010 MSC: } Primary: 62E20. Secondary: 60F99 \textbf{Key
words and Phrases~: } high-dimensional inference, random matrix theory, concentration of measure, proximal mapping, regression M-estimates, robust regression.
\textbf{Contact~:} \texttt{nkaroui@berkeley.edu}}}
\begin{document}
\maketitle
\begin{abstract}
We study the behavior of high-dimensional robust regression estimators in the asymptotic regime where $p/n$ tends to a finite non-zero limit. More specifically, we study ridge-regularized estimators, i.e 
$$
\betaHat=\argmin_{\beta \in \mathbb{R}^p} \frac{1}{n}\sum_{i=1}^n \rho(\eps_i-X_i\trsp \beta)+\frac{\tau}{2}\norm{\beta}^2\;.
$$
In a recently published paper, we had developed with collaborators probabilistic heuristics to understand the asymptotic behavior of $\betaHat$. We give here a rigorous proof, properly justifying all the arguments we had given in that paper. Our proof is based on the probabilistic heuristics we had developed, and hence ideas from random matrix theory, measure concentration and convex analysis. 

While most the work is done for $\tau>0$, we show that under some extra assumptions on $\rho$, it is possible to recover the case $\tau=0$ as a limiting case.

We require that the $X_i$'s be i.i.d with independent entries, but our proof handles the case where these entries are not Gaussian. 

A 2-week old paper of Donoho and Montanari studied a similar problem by a different method and with a different point of view. At this point, their interesting approach requires Gaussianity of the design matrix. 
\end{abstract}
\section{Introduction}
Robust regression estimators, also known as regression $M$-estimates, have been of interest in Statistics for at least the last five decades. They are natural extensions of the least-squares problem: namely we estimate a regression vector by solving the optimization problem
\begin{equation}\label{eq:defRegressionMEstimate}
\betaHat=\argmin_{\beta \in \mathbb{R}^p} \frac{1}{n}\sum_{i=1}^n \rho(Y_i-X_i\trsp \beta)\;.
\end{equation}
Here, $X_i\in \mathbb{R}^p$ is a vector of predictors and $Y_i\in \mathbb{R}$ is a scalar response. $\rho$ is a function from $\mathbb{R}$ to $\mathbb{R}$. Typically once assumes that there is a linear relationship between $X_i$ and $Y_i$, i.e 
$$
Y_i=X_i\trsp \beta_0+\eps_i\;,
$$
where $\eps_i$ are considered to be unknown random errors, and $\beta_0$ is an unknown fixed vector one wishes to estimate. The $n\times p$ matrix $X$ whose $i$-th row is $X_i\trsp$ is called the design matrix.

Huber's papers from the 1970's (\cite{HuberWaldLecture72}, \cite{HuberRobustRegressionAsymptoticsETCAoS73}) contain a number of very interesting results, including limiting behavior for $\betaHat$ as $n\tendsto \infty$ when $p$ is held fixed. Huber also raised the question of understanding the behavior of the estimators when $p$ is large and obtained partial results in the least-squares case. Further interesting contributions happened in the mid to late 80's with work of Portnoy (\cite{PortnoyMestLargishPNConsistencyAoS84}, \cite{PortnoyMestLargishPNCLTAoS85}, \cite{PortnoyCLTRobustRegressionJMVA87}) and Mammen (\cite{MammenRobustRegressionAos89}). In these studies, the authors studied the behavior of regression M-estimates when $p$ and $n$ are both large, but $p/n\tendsto 0$ at various rates. Some of the papers refer to fixed design (i.e $X$ is non-random and the only source of randomness in the problem are $\eps_i$'s), others treat the random design case (i.e both $X$ and $\eps_i$ are random). 

A central result of Huber (see e.g \cite{HuberRonchettiRobustStatistics09}) is that when $p$ is held fixed, and $\eps_i$'s are i.i.d, the optimal $\rho$ one can use is $-\log f_\eps$, where $f_\eps$ is the density of the errors - at least when one measures quality of the estimator by the size of $\scov{\betaHat}$. In \cite{NEKetAlRobustRegressionTechReport11}, \cite{NEKRobustPaperPNAS2013Published} a group of us looked at corresponding questions in the high-dimensional setting where $p/n$ is not small and found the situation to be very different. Indeed, it was clear that one could do better than using $-\log f_\eps$. In \cite{NEKetAlRobustRegressionTechReport11}, we proposed a probabilistic heuristic to understand the behavior of $\betaHat$ and verified the quality of its predictions on several simulations and computations. Our heuristic led to the formulation of a natural variational problem, which we solved in \cite{NEKOptimalMEstimationPNASPublished2013}. Interestingly, the solution of the variational problem depends in general on $p/n$, i.e the dimensionality of the problem. (\cite{NEKetAlRobustRegressionTechReport11} is the long form of the paper \cite{NEKRobustPaperPNAS2013Published}, which is very short  due to page-limit requirements.)

Our heuristic is based on random matrix and concentration of measure ideas. We prove in this paper that these ideas can be used rigorously and indeed, under various assumptions, rigorously justify the claims made in \cite{NEKetAlRobustRegressionTechReport11} and \cite{NEKRobustPaperPNAS2013Published}.

The assumptions under which we operate for the design matrix reflect the central role played by the concentration of measure phenomenon (\cite{ledoux2001}) in this problem. 

A couple of weeks ago, Donoho and Montanari (\cite{DonohoMontanariRobustArxiv13}) announced a proof of some of the results explained in \cite{NEKRobustPaperPNAS2013Published} under the assumption that the design matrix is full of i.i.d Gaussian random variables (i.e $X_i$'s are independent with i.i.d Gaussian entries). Their proof uses different ideas than ours - it is based on the technology of rigorous analysis of approximate message passing algorithms (see \cite{DonohoMalekiMontanariAMP09PNAS} and \cite{BayatiMontanariLASSORISKIEEE2012}). 

By working under concentration assumptions, we are able to show a number of the same results without requiring i.i.d-ness of the entries of the vectors $X_i$'s. However, to prove the main result, we still need the $X_i$'s to have i.i.d entries, but they do not need to be Gaussian. Donoho and Montanari also make interesting connections with rigorous work in statistical physics, namely to the so-called Shcherbina-Tirozzi model (\cite{ShcherbinaTirozzi03} and \cite{TalagrandSpinGlassesBook03}) and other heuristic approaches based on approximate message passing (\cite{Rangan2011}).

Our proof also makes rigorous the probabilistic heuristics that were developed in \cite{NEKRobustPaperPNAS2013Published}. Our point of view is that the properties of $\betaHat$ defined in Equation \eqref{eq:defRegressionMEstimate} via connections to random matrix theory. As such, our proof relies heavily on leave-one-out, martingale and concentration of measure ideas, as some of our previous work (see e.g \cite{nekCorrEllipD}) did in establishing these connections. Leave-one-out ideas seem to be known in Physics under the name ``cavity method", so our general approach falls broadly in that category. A number of the tools we use are commonly used in the spectral analysis of large random matrices via the Stieltjes transform method (see \cite{mp67}, \cite{wachter78}, \cite{silverstein95}). 

\subsection{Focus of the paper}

We focus on the problem of understanding
\begin{equation}\label{eq:defBetaHatRidgeregularizedCase}
\betaHat=\argmin_{\beta \in \mathbb{R}^p} \frac{1}{n}\sum_{i=1}^n \rho(\eps_i-X_i\trsp \beta)+\frac{\tau}{2}\norm{\beta}^2
\end{equation}
where $\tau>0$. We will see later (see Section \ref{sec:regularizedToUnregularized}) that under certain conditions on $\rho$ the understanding of $\betaHat$ for various $\tau$'s will lead us to rigorous understanding of $\betaHat$ when $\tau=0$. 

Different parts of the proof require different assumptions. So we label the assumptions accordingly.

For the first part of the proof (i.e ``leave-one-observation-out''), we work under the following assumptions:
\begin{itemize}
\item \textbf{O1}: $p/n$ has a finite non-zero limit.
\item \textbf{O2}: $\rho$ is twice differentiable, convex and non-linear. $\psi=\rho'$. Note that $\psi'\geq 0$ since $\rho$ is convex. We assume that $\rho\geq 0$ and $\rho(0)=0$. Note that this implies that $\sgn(\psi(x))=\sgn(x)$.
\item \textbf{O3}: $\psi(|x|)=\gO(|x|^m)$ at infinity for some $m$. Furthermore, $\psi'$ is $L(u)$-Lipschitz on $(-|u|,|u|)$, where $L(|u|)\leq K |u|^{m_1}$ as $|u|\tendsto \infty$. Note that this implies that $\rho$ grows at most polynomially at $\infty$. 
\item \textbf{O4}: $X_i$'s are independent and identically distributed. Furthermore, for any 1-Lipschitz convex function $F$, $P(|F(X_i)-m_F|>t)\leq C_n \exp(-c_n t^2)$, $C_n$ and $c_n$ can vary with $n$. For simplicity, we assume that $c_n=\gO(1/(\log(n))^{\alpha})$ for some $\alpha\geq 0$.  $X_i$'s have mean 0 and $\scov{X_i}=\id_p$.
\item \textbf{O5}: $\{X_i\}_{i=1}^n$ are independent of $\{\eps_i\}_{i=1}^n$
\item \textbf{O6}: for any fixed $k$ , $\frac{1}{n}\sum_{i=1}^n \Exp{\psi^{2k}(\eps_i)}$ remains uniformly bounded in $p$ and $n$, as both grow to infinity.
\item \textbf{O7}: $\sup_{1\leq i \leq n} |\eps_i|\triangleq {\mathcal E}_n=\gO((\log n)^{\beta})$ and $\eps_i$'s are independent. 
\end{itemize}

For the second part of the proof (i.e ``leave-one-predictor-out''), we need all the previous assumptions and 
\begin{itemize}
\item \textbf{P1}: $X_i$'s have i.i.d entries. 
\end{itemize}
We note that according to Corollary 4.10 and the discussion that follows in \cite{ledoux2001}, Assumptions O4 and P1 are compatible. O4 is for instance satisfied
if the entries of $X_i$'s are bounded by $\gO((\log n)^{\alpha/2})$. Another example is the case of $X_i\sim {\cal N}(0,\id_p)$. 

For the last part of the proof, when we combine everything together, we will need the following assumptions on top of all the others:
\begin{itemize}
\item \textbf{F1}: the $\eps_i$'s have identical distribution and for any $r>0$, if $Z\sim{\cal N}(0,1)$, independent of $\eps_i$,  $\eps_i+rZ$ has a density $f$ which is increasing on $(-\infty,0)$ and decreasing on $(0,\infty)$. Furthermore, $\lim_{|t|\tendsto\infty} tf(t)=0$.
\item \textbf{F2}: For any fixed $k$, $\Exp{|\eps_i|^k}<\infty$. 
\end{itemize}

We refer the reader to Lemma \ref{lemma:suffCondKeyEqnInCHasUniqueSolution} and the discussion immediately following it for examples of such densities. We note that symmetric (around 0) $\log$-concave densities will for instance satisfy all the assumptions we made about the $\eps_i$'s. See \cite{KarlinTotalPositivity68} and \cite{IbragimovLogConcave1956} for instance. 

The aim of the paper is to prove the following theorem:
\begin{theorem}\label{thm:systemRidgeRegularizedRigorous}
Consider $\betaHat$ defined in Equation \eqref{eq:defBetaHatRidgeregularizedCase} and assume that $\tau>0$ is given. 
Under Assumptions \textbf{O1-O7}, \textbf{P1} and \textbf{F1-F2}, we have: as $p$, $n$ tend to infinity while $p/n\tendsto \kappa \in (0,\infty)$, $\var{\norm{\betaHat}}\tendsto 0$. Furthermore, if $\zHat_\eps=\eps+r_{\rho}(\kappa)Z$, where $\eps$ has the same distribution as $\eps_i$'s and $Z$ is a ${\cal N}(0,1)$ random variable independent of $\eps$, we have: $\norm{\betaHat}\tendsto r_{\rho}(\kappa)$ and there exists a constant $c_{\rho}(\kappa)$ such that 
\begin{equation}\label{eq:systemToProve}
\left\{
\begin{array}{cl}
\Exp{[\prox_{c_{\rho}(\kappa)}(\rho)]'(\zHat_\eps)}&=1-\kappa+\tau c_{\rho}(\kappa)\\
\kappa r^2_{\rho}(\kappa)&=\Exp{(\zHat_\eps-\prox_{c_{\rho}(\kappa)}(\rho)(\zHat_\eps))^2}\;.
\end{array}
\right.
\end{equation}
\end{theorem}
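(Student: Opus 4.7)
The plan is to combine leave-one-out analysis with concentration of measure to derive a system of asymptotic scalar equations characterizing $\norm{\betaHat}$ and an effective regularization parameter $c_\rho(\kappa)$. Since $\tau>0$ makes the objective $\tau$-strongly convex, $\betaHat$ depends on the data in a Lipschitz way (with constant of order $1/\tau$), so assumption \textbf{O4} immediately implies concentration of any reasonable scalar functional of $\betaHat$; this takes care of the statement $\var{\norm{\betaHat}}\tendsto 0$ and reduces the problem to identifying the limit.

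The first stage is leave-one-observation-out. Let $\betaHat_{(i)}$ denote the minimizer with observation $i$ removed and set $H=\frac{1}{n}\sum_j \psi'(\hat r_j) X_j X_j\trsp + \tau \id_p$, with $H_{(i)}$ the analogous Hessian without the $i$-th term. Writing the KKT conditions for $\betaHat$ and $\betaHat_{(i)}$ and Taylor-expanding around $\betaHat_{(i)}$ shows that $\betaHat - \betaHat_{(i)}$ is, to leading order, a scalar multiple of $H_{(i)}^{-1}X_i$, and that the residual $\hat r_i = \eps_i - X_i\trsp \betaHat$ satisfies the implicit identity
$$
\hat r_i \;=\; \prox_{c_i}(\rho)\bigl(\eps_i - X_i\trsp \betaHat_{(i)}\bigr), \qquad c_i = \tfrac{1}{n}X_i\trsp H_{(i)}^{-1} X_i .
$$
Standard Stieltjes-transform / trace identities together with \textbf{O4} yield that $c_i$ concentrates around $\tr(H_{(i)}^{-1})/n$, which in turn concentrates around a deterministic constant $c=c_\rho(\kappa)$. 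Because $X_i$ is independent of $\betaHat_{(i)}$, the quantity $X_i\trsp \betaHat_{(i)}$ is asymptotically Gaussian with variance $\norm{\betaHat_{(i)}}^2$, so $\eps_i - X_i\trsp \betaHat_{(i)}$ is asymptotically distributed as $\zHat_\eps = \eps + r_\rho(\kappa)Z$. Using the prox-derivative identity $[\prox_c(\rho)]'(u) = 1/(1+c\psi'(\prox_c(\rho)(u)))$ together with the trace expansion $\kappa = p/n = \tr(H^{-1}H)/n$ rearranges into the first equation of \eqref{eq:systemToProve} in the limit.

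The second stage is leave-one-predictor-out, which requires assumption \textbf{P1} because it exploits independence of coordinates of $X_i$. Split $X_i=(X_i^{[p]},X_{i,p})$ and $\betaHat=(\betaHat_{[p]},\hat\beta_p)$; dropping the last column yields an auxiliary estimator $\betaHat^{[p]}$, and Taylor-expanding in the removed column gives an expression of the form $\hat\beta_p \approx (\text{denominator})^{-1}\,\frac{1}{n}\sum_i X_{i,p}\psi(r_i^{[p]})$, where the denominator is a trace quantity that concentrates like $c_i$ above. Squaring, averaging over $p$ coordinates using concentration of empirical averages, and substituting the leave-one-observation relation $c\psi(\hat r_i) = \zHat_\eps - \prox_c(\rho)(\zHat_\eps)$ produces, after multiplying by $\kappa$, the second equation of \eqref{eq:systemToProve}.

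The hard part will be controlling the many error terms uniformly in $i$ and $j$. The Taylor-expansion remainders must be $o(1)$ with probability high enough that, after averaging over $n$ observations or $p$ predictors, the cumulative error remains negligible; this will call on \textbf{O3} and \textbf{O6} for polynomial moment control of $\psi$ and its Lipschitz modulus, on the $\tau$-strong convexity to bound $\norm{\betaHat - \betaHat_{(i)}}$ and $\norm{\betaHat - \betaHat^{[p]}}$, and on the $(\log n)^\beta$ bound of \textbf{O7} to truncate extreme residuals so that the polynomial-growth estimates of \textbf{O3} give useful tail bounds. A separate but essential closing step is showing that the limiting system \eqref{eq:systemToProve} admits a unique pair $(c_\rho(\kappa), r_\rho(\kappa))$; hypotheses \textbf{F1}--\textbf{F2} on the error density are tailored to make the associated scalar fixed-point map monotone, which yields uniqueness and forces the concentrated empirical quantities to converge precisely to that pair.
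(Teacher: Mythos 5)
Your overall architecture --- leave-one-observation-out to get the residual/prox identity $R_i\approx\prox_{c_i}(\rho)(\eps_i-X_i\trsp\betaHat_{(i)})$, leave-one-predictor-out for the marginal of a single coordinate, the trace identity behind the first equation, and a monotonicity argument under \textbf{F1} for uniqueness of the scalar fixed point --- is exactly the paper's route, and those parts of the sketch are sound in outline.

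The genuine gap is your very first step. You assert that because the objective is $\tau$-strongly convex, $\betaHat$ is a Lipschitz function of the data, so \textbf{O4} ``immediately'' gives $\var{\norm{\betaHat}}\tendsto 0$. This does not work as stated, for two reasons. First, \textbf{O4} is a concentration hypothesis on each individual vector $X_i$; to concentrate a functional of the entire sample $(X_1,\ldots,X_n,\eps_1,\ldots,\eps_n)$ you need either Gaussian-type dimension-free concentration on the product space or a Talagrand-type tensorization, and the latter requires the functional to be a \emph{convex} Lipschitz function of the data matrix --- $\norm{\betaHat}$ is not obviously convex in $X$. Second, even granting tensorization, the relevant Lipschitz constant is that of $X\mapsto\betaHat$ with respect to the Frobenius norm, and the bound $\norm{\beta_1-\beta_2}\leq \tau^{-1}\norm{f(\beta_1)-f(\beta_2)}$ only controls perturbations of the gradient map $f$; converting a perturbation of a single row $X_i$ into a perturbation of $f$ of size $\gO(1/n)$ requires controlling $\psi(\eps_i-X_i\trsp\beta)$ and $\psi'$ along the way, which is precisely the content of the leave-one-out residual bounds. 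The paper therefore does \emph{not} invoke direct concentration here: it proves $\var{\norm{\betaHat}^2}\tendsto 0$ via the Efron--Stein inequality, bounding each increment $\norm{\betaHat}^2-\norm{\betaHat_{(i)}}^2$ by $\gO(\polyLog(n)/n)$ in $L_2$ using the approximation $\betaHat\approx\betaTilde_i=\betaHat_{(i)}+\eta_i$ from Corollary \ref{coro:AggregResApproxBetaHatByBetaTildeIncl}. So the variance statement is an \emph{output} of the leave-one-out machinery, not an input you can dispose of up front; you should reorder your argument accordingly. Note also that the same variance control is needed later, when you apply Lindeberg--Feller to $X_i\trsp\betaHat_{(i)}$: the claim that $\eps_i-X_i\trsp\betaHat_{(i)}$ is asymptotically $\eps+r_\rho(\kappa)Z$ requires $\norm{\betaHat_{(i)}}^2/\Exp{\norm{\betaHat}^2}\tendsto 1$ in probability, which again rests on the Efron--Stein bound rather than on a generic Lipschitz-concentration shortcut.
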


We use the notation $\prox_c(\rho)$ to denote the proximal mapping of the function $c\rho$. This notion was introduced in \cite{MoreauProxPaper65}. We recall that 
\begin{align*}
\prox_c(\rho)(x)&=\argmin_{y\in \mathbb{R}} (c\rho(y)+\frac{1}{2}(x-y)^2)\;, \text{ or equivalently,}\\
\prox_c(\rho)(x)&=(\id+c\psi)^{-1}(x)\;.
\end{align*}
The proximal mapping is an important notion in convex analysis and convex optimization (see for instance \cite{BeckAndTeboulleChapter2010} for a nice review of analytic properties and an introduction to proximal gradient algorithms). We note that even when $\rho$ is not differentiable, $\prox_c(\rho)(x)$ is a well-defined function.

As explained in \cite{NEKOptimalMEstimationPNASPublished2013}, the previous system can be reformulated in terms of $\prox_1((c_{\rho}(\kappa)\rho)^*)$, where $f^*$ represents the Fenchel-Legendre dual of $f$. 

\subsubsection*{Remarks on the assumptions}
In the context of robust statistics, where regression M-estimates are commonly used, $\rho$ is often taken to grow linearly at infinity. This is for instance the case for Huber functions. Furthermore, it will often be the case that for instance $\psi'$ is bounded. This situation arises if for instance $x\rightarrow x^2/2-\rho(x)$ is a convex function. So the growth conditions at infinity we impose on $\rho$ and $\psi$ are realistic for the problems we have in mind. A look at the proof reveals that if we had more restrictive growth conditions at infinity than the ones we impose, we could tolerate $\eps_i$'s with fewer moments and heavier tails. Understanding how heavy the tails of $\eps_i$ can be and the result still hold is interesting statistically, but we leave these considerations for future work. Conversely, our assumptions about $\eps_i$'s are somewhat restrictive - especially when it comes to their tail behavior. But this is just a consequence of our assumptions on $\rho$ and the fact that those are relatively unrestrictive. 

Assumption \textbf{O4} is a bit stronger than we will need. The functions $F$ we will be dealing with will either be linear or square-roots of quadratic forms. However, as documented in \cite{ledoux2001}, a large number of natural or ``reasonable" distributions satisfy the \textbf{O4} assumptions. Our choice of having a potentially varying $c_n$ is motivated by the idea that we could, for instance, relax an assumption of boundedness of the entries of $X_i$'s  - that guarantees that \textbf{O4} is satisfied when $X_i$ has i.i.d entries -  and replace it by an assumption concerning the moments of $X_i$'s: this is what we did for instance in \cite{nekCorrEllipD} through a truncation of triangular arrays argument. We also refer the interested reader to that paper for a short list of distributions satisfying \textbf{O4}. Finally, we could replace the $\exp(-c_n t^2)$ upper bound in $\textbf{O4}$ by $\exp(-c_n t^\alpha)$ for some fixed $\alpha>0$ and it seems that all our arguments would go through. We chose not to do work under these more general assumptions because it would involve extra book-keeping and does not enlarge the set of distributions we can consider enough to justify this extra technical cost. 

Our assumption that $1/c_n$ increases like a power of $\log(n)$ at most is quite restrictive when it comes to bounded random variables - but is of course satisfied by e.g Gaussian random variables where $c_n$ is a constant independent on $n$ - and motivated by simplifying the book-keeping needed in our proof. Having $1/c_n$ grow like $n^{\gamma}$ for a small $\gamma$ should be feasible - with $\gamma$ depending on $m$ and $m_1$. In the first part of the proof we keep track of the impact of $c_n$ to show this aspect of the problem. 

Statistically, regression $M$-estimates are quite widely used. But in the random design case studied here, they are known to have somewhat undesirable properties (\cite{baranchikInadmissibility73}, \cite{SteinInadmissibilityRegression60}) even in very simple situations. We do not dwell more on these otherwise interesting issues, since they are a bit tangential to the main aim of this particular paper, which is to give a rigorous justification of the heuristic manipulations made in  \cite{NEKRobustPaperPNAS2013Published}.

\subsubsection*{Notations}
We will repeatedly use the following notations: $\polyLog(n)$ is used to replace a power of $\log(n)$; $\lambda_{\max}(M)$ denotes the largest eigenvalue of the matrix $M$; $\opnorm{M}$ denotes the largest singular value of $M$. We call $\SigmaHat=\frac{1}{n}\sum_{i=1}^n X_i X_i\trsp$ the usual sample covariance matrix of the $X_i$'s. We say that $X\leq Y$ in $L_k$ if $\Exp{|X|^k}\leq \Exp{|Y|^k}$. We use the notation $u_n \lesssim v_n$ to say that there exists a constant $K$ independent of $n$ such that $u_n\leq K v_n$ for all $n$. We use the usual statistical notation $\betaHat_{(i)}$ to denote the regression vector we obtain when we do not use the pair $(X_i,Y_i)$ in our optimization problem. We will also use the notation $X_{(i)}$ to denote $\{X_1,\ldots,X_{i-1},X_{i+1},\ldots,X_n\}$. We use the notation $(a,b)$ for either the interval $(a,b)$ or the interval $(b,a)$: in several situations, we will have to localize quantities in intervals using two values $a$ and $b$ but we will not know whether $a<b$ or $b>a$. We denote by $X$ the $n\times p$ design matrix from $i$-th row is $X_i\trsp$. 

\subsubsection*{Remarks}

Note that under our assumptions on $\rho$, $\betaHat$ is defined as the solution of
\begin{align}
f(\betaHat)&=0 \textrm{ with }\\
f(\beta)&=\frac{1}{n}\sum_{i=1}^n -X_i \psi(\eps_i-X_i\trsp \beta)+\tau \beta \;.\label{eq:defGradient}
\end{align}
We call 
\begin{equation}\label{eq:defF}
F(\beta)=\frac{1}{n}\sum_{i=1}^n \rho(\eps_i-X_i\trsp \beta)+\frac{\tau}{2}\norm{\betaHat}^2\;.
\end{equation}
We call $R_i=\eps_i-X_i\trsp \betaHat$ (i.e the residuals), $S=\frac{1}{n}\sum_{i=1}^n \psi'(R_i) X_i X_i\trsp$ and $c_\tau=\frac{1}{n}\trace{S+\tau\id}^{-1}$.

\section{Preliminaries}

\subsection{General remarks}
\begin{proposition}\label{prop:ControlDeltaBetaDeltaf}
Let $\beta_1$ and $\beta_2$ be two vectors in $\mathbb{R}^p$. Then
\begin{equation}\label{eq:controlNormDeltaBetaFromNormDeltaF}
\boxed{
\norm{\beta_1-\beta_2}\leq\frac{1}{\tau}\norm{f(\beta_1)-f(\beta_2)}\;.
}
\end{equation}
When $\rho$ is strongly convex with modulus of convexity $C$, we also have
$$
\norm{\beta_1-\beta_2}\leq \frac{1}{C\lambda_{\min}(\SigmaHat)+\tau}\norm{f(\beta_1)-f(\beta_2)}\;.
$$	
\end{proposition}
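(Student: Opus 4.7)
The plan is to observe that the map $f$ is essentially the gradient of the (strongly convex) objective $F$, so the inequality is nothing more than the standard fact that the gradient of a $\mu$-strongly convex function satisfies $\|\nabla F(\beta_1)-\nabla F(\beta_2)\|\ge\mu\|\beta_1-\beta_2\|$. All that needs doing is identifying the correct value of $\mu$ in each of the two cases.

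First I would simply compute $\nabla F(\beta)$ from \eqref{eq:defF}: differentiating $\rho(\eps_i-X_i\trsp\beta)$ in $\beta$ gives $-X_i\psi(\eps_i-X_i\trsp\beta)$, so $\nabla F(\beta)=f(\beta)$ exactly as in \eqref{eq:defGradient}. Under Assumption \textbf{O2}, $\rho$ is convex, so $\beta\mapsto \rho(\eps_i-X_i\trsp\beta)$ is convex as the composition of a convex function with an affine map; the term $\frac{\tau}{2}\|\beta\|^2$ is $\tau$-strongly convex, hence $F$ is $\tau$-strongly convex.

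Next I would apply the standard strong-convexity gradient inequality. For a twice-differentiable $\mu$-strongly convex function $F$ one has, by the fundamental theorem of calculus applied to $t\mapsto \nabla F(\beta_2+t(\beta_1-\beta_2))$,
\begin{equation*}
\langle f(\beta_1)-f(\beta_2),\,\beta_1-\beta_2\rangle=\int_0^1 (\beta_1-\beta_2)\trsp \nabla^2 F(\beta_2+t(\beta_1-\beta_2))(\beta_1-\beta_2)\,dt\;\ge\;\mu\|\beta_1-\beta_2\|^2.
\end{equation*}
Cauchy--Schwarz on the left-hand side then yields $\|f(\beta_1)-f(\beta_2)\|\ge \mu\|\beta_1-\beta_2\|$, which is the desired inequality. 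Taking $\mu=\tau$ proves the first (boxed) bound.

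For the refined bound, I would note that strong convexity of $\rho$ with modulus $C$ means $\psi'(x)\ge C$ for all $x$, and compute the Hessian
\begin{equation*}
\nabla^2 F(\beta)=\frac{1}{n}\sum_{i=1}^n \psi'(\eps_i-X_i\trsp\beta)\,X_iX_i\trsp+\tau\id\;\succeq\;C\SigmaHat+\tau\id\;\succeq\;(C\lambda_{\min}(\SigmaHat)+\tau)\,\id.
\end{equation*}
Plugging this lower bound into the integral identity above gives strong convexity with modulus $\mu=C\lambda_{\min}(\SigmaHat)+\tau$, and the same Cauchy--Schwarz step finishes the proof. There is no real obstacle here — the only care needed is to make sure $F$'s second derivative exists (guaranteed by twice-differentiability of $\rho$ in \textbf{O2}) and that the pointwise lower bound on $\psi'$ transfers cleanly into a lower bound on $\nabla^2 F$, which it does since $X_iX_i\trsp\succeq 0$.
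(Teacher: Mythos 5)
Your proof is correct, but it runs through a slightly different mechanism than the paper's. The paper applies the mean value theorem to each scalar term $\psi(\eps_i-X_i\trsp\beta)$ individually to obtain the \emph{exact} linear-algebraic identity $f(\beta_1)-f(\beta_2)=(\mathsf{S}_{\beta_1,\beta_2}+\tau\id_p)(\beta_1-\beta_2)$ with $\mathsf{S}_{\beta_1,\beta_2}\succeq 0$, and then inverts, bounding $\opnorm{(\mathsf{S}_{\beta_1,\beta_2}+\tau\id_p)^{-1}}\leq 1/\tau$ (resp.\ $\leq (C\lambda_{\min}(\SigmaHat)+\tau)^{-1}$). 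You instead identify $f=\nabla F$, establish strong monotonicity of the gradient by integrating the Hessian along the segment, and finish with Cauchy--Schwarz. Both arguments rest on the same underlying fact --- the Jacobian of $f$ is $\succeq \tau\id_p$ (resp.\ $\succeq(C\lambda_{\min}(\SigmaHat)+\tau)\id_p$) --- and both are complete; yours is the more standard textbook route and arguably cleaner for proving the norm inequality alone. What the paper's formulation buys is the explicit representation $\beta_1-\beta_2=(\mathsf{S}_{\beta_1,\beta_2}+\tau\id_p)^{-1}(f(\beta_1)-f(\beta_2))$, which is not merely an inequality: it is recalled verbatim and exploited later (e.g.\ in the proof of Lemma \ref{lemma:BoundRiFromrTildeii}, where $f(\betaHat_{(i)})$ is a rank-one-type quantity and the exact inverse-matrix form is needed), so the monotonicity argument, while sufficient here, would not substitute for it downstream. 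One cosmetic point: the monotonicity inequality $\langle \nabla F(\beta_1)-\nabla F(\beta_2),\beta_1-\beta_2\rangle\geq\tau\norm{\beta_1-\beta_2}^2$ for the first bound does not actually require twice differentiability (convexity of $\rho$ already gives monotonicity of the $\rho$-part of the gradient), though under \textbf{O2} your Hessian computation is perfectly valid.
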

\begin{proof}
Let $\beta_1$ and $\beta_2$ be two vectors in $\mathbb{R}^p$. We have
$$
f(\beta_1)-f(\beta_2)=\tau(\beta_1-\beta_2)+\frac{1}{n}\sum_{i=1}^n X_i \left[\psi(\eps_i-X_i\trsp\beta_2)-\psi(\eps_i-X_i\trsp\beta_1)\right]\;.
$$
We can use the mean value theorem to write
$$
\psi(\eps_i-X_i\trsp\beta_2)-\psi(\eps_i-X_i\trsp\beta_1)=\psi'(\gamma^*_{\eps_i,X_i\trsp\beta_1,X_i\trsp\beta_2}) X_i\trsp (\beta_1-\beta_2)\;,
$$
where $\gamma^*_{\eps_i,X_i\trsp\beta_1,X_i\trsp\beta_2}$ is in the interval $(\eps_i-X_i\trsp\beta_1,\eps_i-X_i\trsp\beta_2)$ - we do not care about the order of the endpoint in our notation.

We therefore have
$$
f(\beta_1)-f(\beta_2)=\tau(\beta_1-\beta_2)+\frac{1}{n}\sum_{i=1}^n \psi'(\gamma^*_{\eps_i,X_i\trsp\beta_1,X_i\trsp\beta_2})X_iX_i\trsp (\beta_1-\beta_2)\;,
$$
which we write
\begin{equation}\label{eq:exactRelationDeltafDeltaBeta}
f(\beta_1)-f(\beta_2)=(\mathsf{S}_{\beta_1,\beta_2}+\tau \id_p)(\beta_1-\beta_2)\;,	
\end{equation}
where
$$
\mathsf{S}_{\beta_1,\beta_2}=\frac{1}{n}\sum_{i=1}^n \psi'(\gamma^*_{\eps_i,X_i\trsp\beta_1,X_i\trsp\beta_2})X_iX_i\trsp\;.
$$
We therefore have
$$
\beta_1-\beta_2=(\mathsf{S}_{\beta_1,\beta_2}+\tau \id_p)^{-1}\left(f(\beta_1)-f(\beta_2)\right)\;.
$$

Since $\rho$ is convex, $\psi'=\rho''$ is non-negative and $\mathsf{S}_{\beta_1,\beta_2}$ is positive semi-definite. In the semi-definite order, we have
$\mathsf{S}_{\beta_1,\beta_2}+\tau \id \succeq \tau \id$. When $\rho$ is strongly convex with modulus C, we have $\psi'(x)\geq C$ (see Theorem 4.3.1 in \citet{HiriartLemarechalConvexAnalysisAbridged2001}) and therefore, $\mathsf{S}_{\beta_1,\beta_2}+\tau \id_p \succeq C \SigmaHat+\tau \id\succeq (C\lambda_{\min}(\SigmaHat)+\tau)\id_p$.
In particular,
$$
\norm{\beta_1-\beta_2}\leq\frac{1}{\tau}\norm{f(\beta_1)-f(\beta_2)}\;.
$$
In the strongly convex case, we have
$$
\norm{\beta_1-\beta_2}\leq\frac{1}{C\lambda_{\min}(\SigmaHat)+\tau}\norm{f(\beta_1)-f(\beta_2)}\;.
$$

\end{proof}
In the proof of Proposition \ref{prop:ControlDeltaBetaDeltaf}, it is clear that all we need is that ``enough" $\psi'(\gamma^*_{\eps_i,X_i\trsp\beta_1,X_i\trsp\beta_2})$'s are greater than a constant C. More precisely, let us call $N=\card{i: \psi'(\gamma^*_{\eps_i,X_i\trsp\beta_1,X_i\trsp\beta_2})\geq C}$ and let us call ${\cal I
}$ the corresponding set of indices. Results similar to that of Proposition \ref{prop:ControlDeltaBetaDeltaf} then hold, with $\SigmaHat$ being replaced by $\SigmaHat_{{\cal I}}=\frac{1}{n}\sum_{i\in {\cal I}} X_i X_i\trsp$. This could perhaps be used in certain situations to move away from strong convexity assumptions when we deal with the un-penalized (i.e $\tau=0$) case. See Section \ref{sec:regularizedToUnregularized} for more details about this question. Strong convexity is a very strong (and somewhat undesirable) requirement on $\rho$ for many applications in Statistics.

Proposition  \ref{prop:ControlDeltaBetaDeltaf} yields the following lemma.
\begin{lemma}\label{lemma:ControlApproxBetaHat}
For any $\beta_1$,
$$
\norm{\betaHat-\beta_1}\leq\frac{1}{\tau}\norm{f(\beta_1)}\;.
$$
\end{lemma}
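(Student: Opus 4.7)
The plan is straightforward: Lemma \ref{lemma:ControlApproxBetaHat} is essentially an immediate specialization of Proposition \ref{prop:ControlDeltaBetaDeltaf} to the case $\beta_2 = \betaHat$. The key observation, already recorded in the ``Remarks'' subsection preceding the lemma, is that $\betaHat$ satisfies the first-order optimality condition $f(\betaHat) = 0$ for the strongly convex objective in Equation \eqref{eq:defBetaHatRidgeregularizedCase} (strong convexity is guaranteed by the ridge term $\tfrac{\tau}{2}\norm{\beta}^2$ with $\tau > 0$, together with convexity of $\rho$).

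First I would invoke Proposition \ref{prop:ControlDeltaBetaDeltaf} with the given $\beta_1$ and with $\beta_2 := \betaHat$, which yields
$$
\norm{\beta_1 - \betaHat} \leq \frac{1}{\tau}\norm{f(\beta_1) - f(\betaHat)}\;.
$$
Then I would substitute $f(\betaHat) = 0$ to conclude. The only point that deserves a brief justification is the existence and characterization of $\betaHat$ as a zero of $f$: under Assumption \textbf{O2}, $\rho$ is convex and twice differentiable, so the objective $F$ in Equation \eqref{eq:defF} is convex, and the ridge term makes it strictly (indeed strongly) convex and coercive, so the argmin $\betaHat$ exists and is unique, and its gradient $f(\betaHat)$ defined in Equation \eqref{eq:defGradient} vanishes there.

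There is no real obstacle here — the lemma is a one-line consequence of the proposition just proved. If one wanted to be fully pedantic, the only non-trivial ingredient hiding in the background is the use of Proposition \ref{prop:ControlDeltaBetaDeltaf}, which in turn relied on the mean-value representation of $\psi$ and on $\psi' \geq 0$; but since that proposition has already been established, no further work is required.
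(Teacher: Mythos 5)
Your proposal is correct and coincides with the paper's own one-line proof: apply Proposition \ref{prop:ControlDeltaBetaDeltaf} (Equation \eqref{eq:controlNormDeltaBetaFromNormDeltaF}) with $\beta_2=\betaHat$ and use $f(\betaHat)=0$. The extra remarks on existence and uniqueness of $\betaHat$ are fine but not needed beyond what the paper already records.
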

The lemma is a simple consequence of Equation \eqref{eq:controlNormDeltaBetaFromNormDeltaF} since by definition $f(\betaHat)=0$\;.

In the following, we will strive to find approximations of $\betaHat$. We will therefore use Lemma \ref{lemma:ControlApproxBetaHat} repeatedly.

\subsection{Boundedness of $\norm{\betaHat}$}
We have the following lemma.
\begin{lemma}\label{lemma:betaHatIsBounded}
Let us call $W_n=\frac{1}{n}\sum_{i=1}^n X_i \psi(\eps_i)$.	
We have
$$
\norm{\betaHat}\leq \frac{1}{\tau}\norm{W_n}\;.
$$
In particular, when $X_i$ are independent and have covariance $\id_p$,
\begin{equation}\label{eq:boundNormBetaHatSquaredUsingPsiEpsi}
\Exp{\norm{\betaHat}^2}\leq \frac{1}{\tau^2}\frac{p}{n} \frac{1}{n}\sum_{i=1}^n \Exp{\psi^2(\eps_i)}\;.
\end{equation}

A similar result holds in $L_{2k}$ - provided the entries of $X_i$ has cumulants of order $2k$. This is automatically satisfied under our assumptions. 

This guarantees that $\norm{\betaHat}$ is bounded in $L_{2k}$ provided $\frac{1}{n}\sum_{i=1}^n\Exp{|\psi(\eps_i)|^{2k}}$ is bounded. If this latter quantity is $\polyLog(n)$ so is $\Exp{\norm{\betaHat}^{2k}}$.

We also have 
\begin{equation}\label{eq:boundNormBetaHatFromSimplyRidge}
\norm{\betaHat}\leq \sqrt{\frac{2}{\tau}} \sqrt{\frac{1}{n}\sum_{i=1}^n \rho(\eps_i)}\;,
\end{equation}
and hence 
$$
\Exp{\norm{\betaHat}^{2k}}\leq \frac{2^k}{\tau^k}\Exp{\left[\frac{1}{n}\sum_{i=1}^n \rho(\eps_i)\right]^k}\;.
$$

\end{lemma}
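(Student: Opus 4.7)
The first bound is essentially free from what has already been established. Apply Lemma~\ref{lemma:ControlApproxBetaHat} with $\beta_1=0$: the defining expression in Equation~\eqref{eq:defGradient} gives $f(0)=-\frac{1}{n}\sum_{i=1}^n X_i\psi(\eps_i)=-W_n$, so $\norm{\betaHat}\leq \tau^{-1}\norm{f(0)}=\tau^{-1}\norm{W_n}$. This immediately reduces the problem to controlling moments of $\norm{W_n}$.

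For the $L_2$ bound, write $W_n=\frac{1}{n}\sum_i Z_i$ with $Z_i=X_i\psi(\eps_i)$. By Assumption~\textbf{O5} the $\eps_i$'s are independent of the $X_i$'s, and by \textbf{O4} the $X_i$'s are i.i.d.\ with mean $0$ and covariance $\id_p$; in particular the $Z_i$ are independent and mean zero (conditionally on $\eps_i$, $\Exp{Z_i\mid \eps_i}=0$). Hence the cross-terms in $\Exp{\norm{W_n}^2}=\frac{1}{n^2}\sum_{i,j}\Exp{Z_i\trsp Z_j}$ vanish, and each diagonal term equals $\Exp{\psi^2(\eps_i)\,X_i\trsp X_i}=p\,\Exp{\psi^2(\eps_i)}$ by conditioning on $\eps_i$ and using $\Exp{\norm{X_i}^2}=\trace{\id_p}=p$. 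Dividing by $n^2$ yields Equation~\eqref{eq:boundNormBetaHatSquaredUsingPsiEpsi}.

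The extension to $L_{2k}$ is the one step that requires a genuine argument. The plan is to bound $\norm{W_n}^{2k}\leq p^{k-1}\sum_{j=1}^p|W_n\cdot e_j|^{2k}$ (or to bound each coordinate directly), and then apply a Rosenthal/Marcinkiewicz--Zygmund inequality to the sum of independent mean-zero real variables $\{X_i\trsp e_j\,\psi(\eps_i)\}_{i=1}^n$. This produces a bound of the form $\mathrm{const}\cdot\big(n^{-k}[\sum_i \Exp{\psi^2(\eps_i)}]^k+n^{-2k+1}\sum_i \Exp{(X_i\trsp e_j)^{2k}}\Exp{\psi^{2k}(\eps_i)}\big)$, and the second summand is controlled using that the entries of $X_i$ have cumulants (equivalently moments) of every fixed order (a consequence of the concentration assumption~\textbf{O4}, as in Lemma~4.2 and Proposition~1.10 of \cite{ledoux2001}). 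The $\polyLog(n)$ statement follows because each of these moment terms inherits at worst $\polyLog(n)$ growth from the bound on $\Exp{\psi^{2k}(\eps_i)}$. This is the main technical obstacle, and it is entirely orthogonal to the structure of the estimator: it is just a moment bound on a sum of independent vectors.

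The alternative bound~\eqref{eq:boundNormBetaHatFromSimplyRidge} uses only the definition of $\betaHat$ as a minimizer of $F$. By optimality $F(\betaHat)\leq F(0)=\frac{1}{n}\sum_i\rho(\eps_i)$, while \textbf{O2} gives $\rho\geq 0$ so $F(\betaHat)\geq \frac{\tau}{2}\norm{\betaHat}^2$. Combining the two inequalities yields the claim, and the $L_{2k}$ version is obtained by raising to the $k$-th power and taking expectations — no independence structure or concentration is needed for this second route, which explains why it is included as a complementary tool to the $W_n$-based bound.
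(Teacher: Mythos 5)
Your proof is correct and follows essentially the same route as the paper's: the reduction $\norm{\betaHat}\leq \tau^{-1}\norm{W_n}$ via Lemma~\ref{lemma:ControlApproxBetaHat} with $\beta_1=0$, the exact second-moment computation using independence of $\{X_i\}$ and $\{\eps_i\}$, the coordinate-wise convexity bound $\norm{W_n}^{2k}\leq p^{k-1}\sum_{j}W_n^{2k}(j)$, and the optimality comparison $F(\betaHat)\leq F(0)$ for Equation~\eqref{eq:boundNormBetaHatFromSimplyRidge}. The only cosmetic difference is that for the $L_{2k}$ moment of each coordinate you invoke a Rosenthal/Marcinkiewicz--Zygmund inequality where the paper works conditionally on the $\eps_i$'s and computes cumulants of $W_n(j)$ directly (the $2k$-th cumulant being $\sum_i\psi^{2k}(\eps_i)\kappa_{2k}(X_i(j))/n^{2k}=\gO(n^{1-2k})$, whence $\Exp{W_n^{2k}(j)}=\gO(n^{-k})$); both tools rest on the same independence and finite-moment input from \textbf{O4} and yield the same conclusion.
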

Though from a probabilistic point of view our various bounds might look interchangeable, it is important to have both from the point of view of statistical applications. Indeed, in robust regression, where $\eps_i$'s can have heavy tails, one would typically used bounded $\psi$ functions (for instance the Huber functions or smoothed version of the Huber functions - see \cite{HuberRonchettiRobustStatistics09}, p. 84, Equation (4.51) for a definition of the exponential of the Huber functions). The bound based on Equation 
\eqref{eq:boundNormBetaHatSquaredUsingPsiEpsi} is then particularly helpful. 
\newcommand{\OneVector}{\mathbf{e}}
\begin{proof}
The first inequality follows easily from taking $\beta_1=0$ in Lemma \ref{lemma:ControlApproxBetaHat}	and realizing that $W_n=f(0)$.
The second inequality follows from the fact that, if $\OneVector$ is an $n$-dimensional vector with entries all equal to 1,
$W_n=X\trsp D_\psi \OneVector/n$, where $X$ is $n\times p$ and $D_\psi$ is a diagonal matrix whose $(i,i)$ entry is $\psi(\eps_i)$. Hence,
$$
W_n^2=\frac{1}{n^2} \OneVector\trsp D_\psi XX\trsp D_\psi \OneVector\;,
$$
and therefore,
$\Exp{W_n^2}=\frac{p}{n^2}\sum_{i=1}^n \Exp{\psi^2(\eps_i)}$, since $\Exp{XX\trsp}=p\id_n$ and $\{\eps_i\}_{i=1}^n$ is independent of $\{X_i\}_{i=1}^n$.

For the $L_{2k}$ bound, can use $\Exp{\norm{W_n}^{2k}}\leq p^{k-1}\sum_{j=1}^p \Exp{W_n^{2k}(j)}$, because for $\alpha_i>0$, $(\sum_{i=1}^p \alpha_i)^k\leq p^{k-1}\sum \alpha_i^k$ by convexity.

Let us work temporarily conditional on $\eps_i$. 
We control  $\Exp{W_n^{2k}(i)}$ through the use of cumulants since $W_n(j)=\sum_{i=1}^n X_i(j)\psi(\eps_j)/n$, so the $2k$-th cumulant of $W_n(j)$ is $\sum_{i=1}^n \psi^{2k}(\eps_i)/n^{2k} \kappa_{2k}(X_i(j))$. These cumulants are all of order $n^{1-2k}$, if $\sum \psi^{2k}(\eps_i)/n=O(1)$. By the classical connection between moments and cumulants, we see that $\Exp{W_n^{2k}(j)}=\gO(n^{-k})$ if $\frac{1}{n}\sum_{i=1}^n \Exp{\psi^{2k}(\eps_i)}$. Hence,
$\Exp{\norm{W_n}^{2k}}=\gO(p^{k-1}pn^{-k})=\gO(1)$.

The proof of Equation \eqref{eq:boundNormBetaHatFromSimplyRidge} simply follows from observing that 
\begin{align*}
\frac{\tau}{2}\norm{\betaHat}^2&\leq \frac{1}{n}\sum_{i=1}^n \rho(\eps_i-X_i\trsp\betaHat)+\frac{\tau}{2}\norm{\betaHat}^2\\
&\leq \frac{1}{n}\sum_{i=1}^n \rho(\eps_i)\;.
\end{align*}
Indeed, since, according to Equation \eqref{eq:defF},
$$
\betaHat=\argmin_{\beta} F(\beta)\;,
$$
we also have 
$$
F(\betaHat)\leq F(0)=\frac{1}{n}\sum_{i=1}^n \rho(\eps_i)\;,
$$
and the result follows immediately.

\end{proof}

\section{Approximating $\betaHat$ by $\betaHat_{(i)}$: leave-one-observation-out}
We consider the situation where we leave one observation out. We call
\begin{align*}
\rTilde_{j,(i)}&=\eps_j-X_j\trsp \betaHat_{(i)} \text{ and }\\
S_i&=\frac{1}{n}\sum_{j\neq i}\psi'(\rTilde_{j,(i)})X_jX_j\trsp\;.
\end{align*}

We also call
$$
f_i(\beta)=-\frac{1}{n}\sum_{j\neq i}X_j \psi(\eps_j-X_j\trsp \beta)+\tau \beta\;.
$$

We call $\betaHat_{(i)}$ the solution of $f_i(\betaHat_{(i)})=0$ and call it the leave-one-out estimate.

Let us consider
\begin{equation}\label{eq:ApproxBetaHatStandardLeaveOneOut}
\betaTilde_i=\betaHat_{(i)}+\frac{1}{n}(S_i+\tau \id)^{-1}X_i \psi(\prox_{c_i}(\rho)(\rTilde_{i,(i)}))\triangleq \betaHat_{(i)}+\eta_i\;,
\end{equation}
where 
\begin{align}\label{eq:defciandetaifirstpart}
c_i&=\frac{1}{n}X_i\trsp (S_i+\tau \id)^{-1}X_i\;, \text{ and }\\
\eta_i&=\frac{1}{n}(S_i+\tau \id)^{-1}X_i \psi(\prox_{c_i}(\rho)(\rTilde_{i,(i)}))\;.
\end{align}

All these approximations are ``very natural'' in light of the probabilistic heuristics we derived for this problem in \cite{NEKRobustPaperPNAS2013Published} - so we refer the reader to that paper for explanations about why we choose to introduce these quantities. One of the aim of the paper is to show that these heuristics are valid and indeed open up the horizon to rigorous proofs. 

The aim of the work that follows is to show that $\betaHat$ can be very well approximated by $\betaTilde_i$. In Corollary \ref{coro:AggregResApproxBetaHatByBetaTildeIncl}, we show that the approximation is accurate to order $\polyLog(n)/n$ in Euclidian norm, if for instance $1/c_n=\polyLog(n)$. We refer the reader to Corollary \ref{coro:AggregResApproxBetaHatByBetaTildeIncl} for full details. 

\subsection{Deterministic bounds}

\begin{proposition}\label{prop:ControlDeltaBetaHatBetaTildei}
We have
\begin{equation}\label{eq:approxBetaHatbyBetaTildei}
\norm{\betaHat-\betaTilde_i}\leq \frac{1}{\tau} \norm{{\mathcal R}_i}\;,
\end{equation}
where
\begin{equation}\label{eq:definitionScriptRiLOOO}
{\mathcal R}_i=\frac{1}{n}\sum_{j\neq i} \left[\psi'(\gamma^*(X_j,\betaHat_{(i)},\eta_i))-\psi'(\rTilde_{j,(i)})\right] X_jX_j\trsp \eta_i\;,
\end{equation}
and $\gamma^*(X_j,\betaHat_{(i)},\eta_i)$ is in the (``unordered") interval $(\rTilde_{j,(i)},\rTilde_{j,(i)}-X_j\trsp\eta_i)$.
\end{proposition}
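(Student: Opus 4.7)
The plan is to apply Lemma \ref{lemma:ControlApproxBetaHat} with $\beta_1 = \betaTilde_i$, so the whole task reduces to computing $f(\betaTilde_i)$ and recognizing it as ${\mathcal R}_i$. Everything else is algebra powered by the definitions of $\eta_i$, $c_i$, and the proximal map.

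First, I would split $f(\betaTilde_i) = -\frac{1}{n}\sum_{j=1}^n X_j \psi(\eps_j - X_j\trsp \betaTilde_i) + \tau \betaTilde_i$ into the $j=i$ term and the sum over $j \neq i$. Since $\eps_j - X_j\trsp \betaTilde_i = \rTilde_{j,(i)} - X_j\trsp \eta_i$, I would apply the mean value theorem to each $j \neq i$ term to write $\psi(\rTilde_{j,(i)} - X_j\trsp \eta_i) = \psi(\rTilde_{j,(i)}) - \psi'(\gamma^*(X_j,\betaHat_{(i)},\eta_i)) X_j\trsp \eta_i$. Plugging in and using that $\betaHat_{(i)}$ is characterized by $f_i(\betaHat_{(i)}) = 0$, i.e. $-\frac{1}{n}\sum_{j\neq i} X_j \psi(\rTilde_{j,(i)}) + \tau \betaHat_{(i)} = 0$, the ``leading'' terms cancel and I get
\begin{equation*}
f(\betaTilde_i) = -\frac{1}{n} X_i \psi(\rTilde_{i,(i)} - X_i\trsp \eta_i) + (S_i + \tau \id_p)\eta_i + {\mathcal R}_i,
\end{equation*}
where I have rewritten $\frac{1}{n}\sum_{j\neq i}\psi'(\rTilde_{j,(i)}) X_j X_j\trsp \eta_i + \tau \eta_i = (S_i + \tau \id_p)\eta_i$ and isolated ${\mathcal R}_i$ as the mismatch $\frac{1}{n}\sum_{j\neq i}[\psi'(\gamma^*(X_j,\betaHat_{(i)},\eta_i)) - \psi'(\rTilde_{j,(i)})] X_j X_j\trsp \eta_i$.

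The main (and really only) nontrivial step is showing that the two remaining terms cancel, i.e.\ that $(S_i + \tau \id_p)\eta_i = \frac{1}{n} X_i \psi(\rTilde_{i,(i)} - X_i\trsp \eta_i)$. By the definition \eqref{eq:ApproxBetaHatStandardLeaveOneOut}, $(S_i + \tau \id_p)\eta_i = \frac{1}{n} X_i \psi(\prox_{c_i}(\rho)(\rTilde_{i,(i)}))$, so the task is to verify the algebraic identity
\begin{equation*}
\rTilde_{i,(i)} - X_i\trsp \eta_i = \prox_{c_i}(\rho)(\rTilde_{i,(i)}).
\end{equation*}
Set $u = \prox_{c_i}(\rho)(\rTilde_{i,(i)})$; by the characterization $\prox_c(\rho) = (\id + c\psi)^{-1}$ one has $c_i\psi(u) = \rTilde_{i,(i)} - u$. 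On the other hand, from the definitions of $\eta_i$ and $c_i$,
\begin{equation*}
X_i\trsp \eta_i = \frac{1}{n} X_i\trsp (S_i + \tau \id_p)^{-1} X_i \,\psi(u) = c_i \psi(u) = \rTilde_{i,(i)} - u,
\end{equation*}
which is exactly the desired identity. This is precisely the algebraic reason $c_i$ was chosen via the quadratic form $\frac{1}{n} X_i\trsp (S_i+\tau\id)^{-1} X_i$ and $\eta_i$ via the prox mapping: the leave-one-out linearization annihilates the $i$-th term exactly.

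Once this cancellation is in hand, $f(\betaTilde_i) = {\mathcal R}_i$ and Lemma \ref{lemma:ControlApproxBetaHat} gives $\norm{\betaHat - \betaTilde_i} \leq \frac{1}{\tau}\norm{{\mathcal R}_i}$, which is Equation \eqref{eq:approxBetaHatbyBetaTildei}. I do not expect a genuine obstacle here; the only point to watch is that the mean value theorem provides the intermediate value $\gamma^*(X_j,\betaHat_{(i)},\eta_i)$ in the unordered interval $(\rTilde_{j,(i)},\rTilde_{j,(i)} - X_j\trsp \eta_i)$, which is consistent with the notational convention adopted in the paper.
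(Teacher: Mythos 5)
Your proof is correct and follows essentially the same route as the paper: decompose $f(\betaTilde_i)$ using $f_i(\betaHat_{(i)})=0$, apply the mean value theorem to isolate ${\mathcal R}_i$, and use the defining identity $x-c\psi(\prox_c(\rho)(x))=\prox_c(\rho)(x)$ together with $X_i\trsp\eta_i=c_i\psi(\prox_{c_i}(\rho)(\rTilde_{i,(i)}))$ to cancel the $i$-th term against $(S_i+\tau\id_p)\eta_i$. Nothing is missing.
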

\begin{proof}
We have of course,
$$
f(\betaTilde_i)=f(\betaTilde_i)-f_i(\betaHat_{(i)})=-\frac{1}{n}X_i \psi(\eps_i-X_i\trsp\betaTilde_i)+\frac{1}{n}\sum_{j\neq i}X_j\left[\psi(\eps_j-X_j\trsp \betaHat_{(i)})-\psi(\eps_j-X_j\trsp (\betaHat_{(i)}+\eta_i))\right]+\tau \eta_i\;.
$$

By the mean-value theorem, we also have
$$
\psi(\eps_j-X_j\trsp \betaHat_{(i)})-\psi(\eps_j-X_j\trsp (\betaHat_{(i)}+\eta_i))=\psi'(\rTilde_{j,(i)})X_j\trsp \eta_i + \left[\psi'(\gamma^*(X_j,\betaHat_{(i)},\eta_i))-\psi'(\rTilde_{j,(i)})\right] X_j\trsp \eta_i\;,
$$
where $\gamma^*(X_j,\betaHat_{(i)},\eta_i)$ is in the (``unordered") interval $(\eps_j-X_j\trsp \betaHat_{(i)},\eps_j-X_j\trsp (\betaHat_{(i)}+\eta_i))$, i.e
$(\rTilde_{j,(i)},\rTilde_{j,(i)}-X_j\trsp\eta_i)$.

Hence, if ${\mathcal R}_i$ is the quantity defined in Equation \eqref{eq:definitionScriptRiLOOO},
\begin{align*}
\frac{1}{n}\sum_{j\neq i}X_j\left[\psi(\eps_j-X_j\trsp \betaHat_{(i)})-\psi(\eps_j-X_j\trsp (\betaHat_{(i)}+\eta_i))\right]&=\frac{1}{n}\sum_{j\neq i}\psi'(\rTilde_{j,(i)})X_jX_j\trsp \eta_i + {\mathcal R}_i\;,\\
&=S_i \eta_i+{\mathcal R}_i\;.
\end{align*}

In light of the previous simplifications, we have
$$
f(\betaTilde_i)=-\frac{1}{n}X_i \psi(\eps_i-X_i\trsp\betaTilde_i)+(S_i+\tau \id) \eta_i +{\mathcal R}_i\;.
$$
Since by definition, $\eta_i=\frac{1}{n}(S_i+\tau \id)^{-1}X_i \psi(\prox_{c_i}(\rho)(\rTilde_{i,(i)}))$,
$$
(S_i+\tau \id) \eta_i=\frac{1}{n}X_i \psi(\prox_{c_i}(\rho)(\rTilde_{i,(i)}))\;.
$$
In other respects,
$$
\eps_i-X_i\trsp\betaTilde_i=\rTilde_{i,(i)}-c_i \psi(\prox_{c_i}(\rho)(\rTilde_{i,(i)}))\;.
$$
When $\psi$ is differentiable, $x-c\psi(\prox_c(\rho)(x))=\prox_c(\rho)(x)$ almost by definition of the proximal mapping (see Lemma \ref{lemma:ValueProxat0} and its proof). Therefore,
$$
-\frac{1}{n}X_i \psi(\eps_i-X_i\trsp\betaTilde_i)+(S_i+\tau \id) \eta_i=\frac{1}{n}X_i \left[-\psi(\prox_{c_i}(\rho)(\rTilde_{i,(i)}))+\psi(\prox_{c_i}(\rho)(\rTilde_{i,(i)}))\right]=0.
$$
We conclude that
$$
f(\betaTilde_i)={\mathcal R}_i\;.
$$
Applying Lemma \ref{lemma:ControlApproxBetaHat}, we see that
$$
\norm{\betaHat-\betaTilde_i}\leq\frac{1}{\tau}\norm{{\mathcal R}_i}\;.
$$
\end{proof}
\subsubsection{On $\bm{{\mathcal R}_i}$}

\begin{lemma}\label{lemma:controlNormRemainderFLeaveOneOutAdjusted}
We have
\begin{equation}\label{eq:ControlEtai}
\norm{\eta_i}\leq \frac{1}{\sqrt{n}\tau}\frac{\norm{X_i}}{\sqrt{n}} \left[|\psi(\rTilde_{i,(i)})|\wedge \frac{|\rTilde_{i,(i)}|}{c_i}\right]\;,
\end{equation}	
and
\begin{equation}\label{eq:ControlNormRemainderFLeaveOneOutAdjusted}
\norm{{\mathcal R}_i}\leq \opnorm{\SigmaHat} \sup_{j\neq i}\left|\psi'(\gamma^*(X_j,\betaHat_{(i)},\eta_i))-\psi'(\rTilde_{j,(i)})\right|
\frac{1}{\sqrt{n}\tau}\frac{\norm{X_i}}{\sqrt{n}} \left[|\psi(\rTilde_{i,(i)})|\wedge |\rTilde_{i,(i)}|/c_i\right]\;.
\end{equation}
\end{lemma}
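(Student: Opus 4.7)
The plan is to prove the two inequalities in sequence: \eqref{eq:ControlNormRemainderFLeaveOneOutAdjusted} will follow from \eqref{eq:ControlEtai} together with a standard operator-norm manipulation for weighted sums of rank-one matrices. The only nontrivial analytic input is a pair of bounds on $|\psi(\prox_c(\rho)(x))|$ that come directly from the definition of the proximal map.

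For \eqref{eq:ControlEtai} I start from $\eta_i=\tfrac{1}{n}(S_i+\tau\id)^{-1}X_i\,\psi(\prox_{c_i}(\rho)(\rTilde_{i,(i)}))$. Since $S_i\succeq 0$, the operator norm of $(S_i+\tau\id)^{-1}$ is at most $1/\tau$, so $\norm{\eta_i}\leq \tfrac{\norm{X_i}}{n\tau}\,|\psi(\prox_{c_i}(\rho)(\rTilde_{i,(i)}))|$. I then bound $|\psi(\prox_c(\rho)(x))|$ in two ways. The first-order optimality condition for the proximal map reads $\prox_c(\rho)(x)+c\,\psi(\prox_c(\rho)(x))=x$, whence $\psi(\prox_c(\rho)(x))=(x-\prox_c(\rho)(x))/c$. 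Combined with the sign relation $\sgn(\psi(y))=\sgn(y)$ from Assumption \textbf{O2}, this identity forces $\prox_c(\rho)(x)$ to lie between $0$ and $x$: if $\prox_c(\rho)(x)$ had the opposite sign from $x$ (resp.\ a larger magnitude), then the right-hand side $\prox_c(\rho)(x)+c\,\psi(\prox_c(\rho)(x))$ would pick up a sign (resp.\ magnitude) inconsistent with $x$. This inclusion yields $|x-\prox_c(\rho)(x)|\leq|x|$, hence $|\psi(\prox_c(\rho)(x))|\leq|x|/c$. Because $\psi$ is non-decreasing with $\psi(0)=0$, the same inclusion also yields $|\psi(\prox_c(\rho)(x))|\leq|\psi(x)|$. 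Taking the minimum and plugging into the preceding bound on $\norm{\eta_i}$ delivers \eqref{eq:ControlEtai}.

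For \eqref{eq:ControlNormRemainderFLeaveOneOutAdjusted} I write ${\mathcal R}_i=M_i\,\eta_i$ with
\[
M_i=\tfrac{1}{n}\sum_{j\neq i}\bigl[\psi'(\gamma^*(X_j,\betaHat_{(i)},\eta_i))-\psi'(\rTilde_{j,(i)})\bigr]\,X_jX_j\trsp,
\]
so $\norm{{\mathcal R}_i}\leq \opnorm{M_i}\,\norm{\eta_i}$. For any unit vector $v$ the quadratic form $v\trsp M_iv$ is a weighted sum of the non-negative scalars $(X_j\trsp v)^2$; pulling out the supremum of the weights gives
\[
|v\trsp M_iv|\leq \sup_{j\neq i}\bigl|\psi'(\gamma^*(X_j,\betaHat_{(i)},\eta_i))-\psi'(\rTilde_{j,(i)})\bigr|\;v\trsp\Bigl(\tfrac{1}{n}\sum_{j\neq i}X_jX_j\trsp\Bigr)v.
\]
Since $M_i$ is symmetric and $\tfrac{1}{n}\sum_{j\neq i}X_jX_j\trsp\preceq\SigmaHat$, taking the supremum over unit $v$ yields $\opnorm{M_i}\leq\opnorm{\SigmaHat}\,\sup_{j\neq i}|\psi'(\gamma^*(X_j,\betaHat_{(i)},\eta_i))-\psi'(\rTilde_{j,(i)})|$. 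Multiplying this by the bound \eqref{eq:ControlEtai} on $\norm{\eta_i}$ produces \eqref{eq:ControlNormRemainderFLeaveOneOutAdjusted}.

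I do not foresee a real obstacle: the harder ideas, namely the choice of the first-order correction $\eta_i$ and the appearance of the proximal map, were already absorbed into the statement and proof of Proposition \ref{prop:ControlDeltaBetaHatBetaTildei}. The only care-worthy step is keeping the signs straight when showing that $\prox_c(\rho)(x)$ is trapped between $0$ and $x$, which is precisely where Assumption \textbf{O2} intervenes.
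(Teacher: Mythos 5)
Your proposal is correct and follows essentially the same route as the paper: bound $\opnorm{(S_i+\tau\id)^{-1}}$ by $1/\tau$ for \eqref{eq:ControlEtai}, establish $|\psi(\prox_c(\rho)(x))|\leq|\psi(x)|\wedge|x|/c$ from the fixed-point identity and the sign condition in \textbf{O2} (this is exactly the content of the paper's Lemma \ref{lemma:ValueProxat0}, which you re-derive inline), and then control $\opnorm{{\mathcal R}_i}$ via $\opnorm{\tfrac{1}{n}X\trsp {\mathsf D} X}\leq\opnorm{{\mathsf D}}\opnorm{\SigmaHat}$ — you phrase this last step through quadratic forms while the paper invokes submultiplicativity of the operator norm, but the two are interchangeable.
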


\begin{proof}
We have
$$
{\mathcal R}_i=\frac{1}{n}\sum_{j\neq i} \left[\psi'(\gamma^*(X_j,\betaHat_{(i)},\eta_i))-\psi'(\rTilde_{j,(i)})\right] X_jX_j\trsp \eta_i\;.
$$
Of course, ${\mathcal S}=\frac{1}{n}\sum_{j\neq i} \left[\psi'(\gamma^*(X_j,\betaHat_{(i)},\eta_i))-\psi'(\rTilde_{j,(i)})\right] X_jX_j\trsp$ can be written
${\mathcal S}=\frac{1}{n}X\trsp {\mathsf D} X$, where ${\mathsf D}$ is a diagonal matrix with $(j,j)$ entry $\left[\psi'(\gamma^*(X_j,\betaHat_{(i)},\eta_i))-\psi'(\rTilde_{j,(i)})\right]$ and $(i,i)$ entry 0.

Using the fact that $\opnorm{\cdot}$ is a matrix norm, we see that $\opnorm{{\mathcal S}}\leq \opnorm{\SigmaHat}\opnorm{{\mathsf D}}$. This implies that
$$
\norm{{\mathcal R}_i}\leq \opnorm{\SigmaHat} \sup_{j\neq i}\left|\psi'(\gamma^*(X_j,\betaHat_{(i)},\eta_i))-\psi'(\rTilde_{j,(i)})\right| \norm{\eta_i}\;,
$$
where $\SigmaHat=\frac{1}{n}\sum_{i=1}^nX_i X_i\trsp$ is the usual sample covariance matrix.

We note that
$$
\norm{\eta_i}\leq \frac{1}{\sqrt{n}\tau}\frac{\norm{X_i}}{\sqrt{n}} |\psi(\prox_{c_i}(\rho)(\rTilde_{i,(i)}))|\;.
$$

Using Lemma \ref{lemma:ValueProxat0}, we see that
$$
|\psi(\prox_{c_i}(\rho)(\rTilde_{i,(i)}))|\leq |\psi(\rTilde_{i,(i)})|\wedge \frac{|\rTilde_{i,(i)}|}{c_i}\;.
$$
The lemma is shown.
\end{proof}

\subsubsection{On $\gamma^*(X_j,\betaHat_{(i)},\eta_i)$ and related quantities}
We now show how to control $\frac{1}{\sqrt{n}}\sup_{j\neq i}\left|\psi'(\gamma^*(X_j,\betaHat_{(i)},\eta_i))-\psi'(\rTilde_{j,(i)})\right|$
\begin{lemma}
Let us call 
$$
{\cal B}_n(i)=\sup_{j\neq i}\left[|\eps_j-X_j\trsp \betaHat_{(i)}|+|X_j\trsp \eta_i|\right]\;.
$$
Suppose, as in our assumptions, that $\psi'$ is $L({\cal B}_n(i))$ Lipschitz on $(-{\cal B}_n(i),{\cal B}_n(i))$. Then, 
$$
\sup_{j\neq i}\left|\psi'(\gamma^*(X_j,\betaHat_{(i)},\eta_i))-\psi'(\rTilde_{j,(i)})\right|\leq L({\cal B}_n(i)) \sup_{j\neq i}|X_j\trsp \eta_i|\;.
$$
It follows that
$$
\norm{{\mathcal R}_i}\leq
  \sup_{j\neq i}|X_j\trsp \eta_i|
\frac{L({\cal B}_n(i))}{\sqrt{n}\tau}\frac{\norm{X_i}}{\sqrt{n}} \opnorm{\SigmaHat} \left[|\psi(\rTilde_{i,(i)})|\wedge |\rTilde_{i,(i)}|/c_i\right]\;.
$$
\end{lemma}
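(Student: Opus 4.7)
The plan is to combine the Lipschitz control of $\psi'$ on the interval $(-{\cal B}_n(i),{\cal B}_n(i))$ with the fact that the ``unordered'' interval containing $\gamma^*(X_j,\betaHat_{(i)},\eta_i)$ has length at most $|X_j\trsp \eta_i|$. The bound on $\norm{{\mathcal R}_i}$ then follows by substitution into Equation \eqref{eq:ControlNormRemainderFLeaveOneOutAdjusted} from the preceding lemma.

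First I would verify that $\gamma^*(X_j,\betaHat_{(i)},\eta_i)$ lies in the Lipschitz domain. By construction $\gamma^*$ belongs to the unordered interval $(\rTilde_{j,(i)},\rTilde_{j,(i)}-X_j\trsp\eta_i)$, so its absolute value is bounded by $\max(|\rTilde_{j,(i)}|, |\rTilde_{j,(i)}-X_j\trsp\eta_i|)$. By the triangle inequality both are at most $|\eps_j-X_j\trsp\betaHat_{(i)}|+|X_j\trsp\eta_i|\leq {\cal B}_n(i)$ for $j\neq i$. Similarly $|\rTilde_{j,(i)}|\leq {\cal B}_n(i)$. Hence both $\gamma^*$ and $\rTilde_{j,(i)}$ lie in the closure of the interval on which $\psi'$ is assumed to be $L({\cal B}_n(i))$-Lipschitz (and Lipschitz properties extend to the closure).

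Applying the Lipschitz bound yields
\[
\left|\psi'(\gamma^*(X_j,\betaHat_{(i)},\eta_i))-\psi'(\rTilde_{j,(i)})\right|\leq L({\cal B}_n(i))\,|\gamma^*(X_j,\betaHat_{(i)},\eta_i)-\rTilde_{j,(i)}|\leq L({\cal B}_n(i))\,|X_j\trsp\eta_i|,
\]
where the second inequality uses that $\gamma^*$ lies between $\rTilde_{j,(i)}$ and $\rTilde_{j,(i)}-X_j\trsp\eta_i$, so it is within $|X_j\trsp\eta_i|$ of $\rTilde_{j,(i)}$. Taking the supremum over $j\neq i$ produces the first inequality stated in the lemma.

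For the control of $\norm{{\mathcal R}_i}$, I would refine the argument of Lemma \ref{lemma:controlNormRemainderFLeaveOneOutAdjusted} rather than invoking the loose operator-norm bound. Since ${\mathcal R}_i=\frac{1}{n}\sum_{j\neq i}\left[\psi'(\gamma^*(X_j,\betaHat_{(i)},\eta_i))-\psi'(\rTilde_{j,(i)})\right]X_j X_j\trsp\eta_i$, using $|X_j\trsp\eta_i|\leq \sup_{k\neq i}|X_k\trsp\eta_i|$ for each $j$, the Lipschitz estimate above gives
\[
\norm{{\mathcal R}_i}\leq L({\cal B}_n(i))\,\sup_{j\neq i}|X_j\trsp\eta_i|\cdot \opnorm{\SigmaHat}\,\norm{\eta_i}.
\]
Combining this with the bound $\norm{\eta_i}\leq \frac{1}{\sqrt{n}\tau}\frac{\norm{X_i}}{\sqrt{n}}\bigl[|\psi(\rTilde_{i,(i)})|\wedge |\rTilde_{i,(i)}|/c_i\bigr]$ from Equation \eqref{eq:ControlEtai} yields the claimed inequality. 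The only step that requires care is the containment in the Lipschitz domain, and there the key observation is simply that the definition of ${\cal B}_n(i)$ has been tailored precisely so that both endpoints of the unordered interval fit inside $[-{\cal B}_n(i),{\cal B}_n(i)]$; no hard probabilistic estimate is needed at this stage, as the stochastic control of ${\cal B}_n(i)$, $L({\cal B}_n(i))$, $\opnorm{\SigmaHat}$, $\norm{X_i}/\sqrt{n}$, and $\sup_{j\neq i}|X_j\trsp\eta_i|$ is deferred to the subsequent probabilistic lemmas.
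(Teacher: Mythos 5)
Your proposal is correct and follows essentially the same route as the paper: bound $|\gamma^*(X_j,\betaHat_{(i)},\eta_i)-\rTilde_{j,(i)}|$ by $|X_j\trsp\eta_i|$, check that both points lie in $(-{\cal B}_n(i),{\cal B}_n(i))$, apply the Lipschitz hypothesis, and substitute into the bounds of the preceding lemma (your ``refinement'' of the operator-norm step reduces to the same estimate $\opnorm{{\mathcal S}}\leq \sup_j|\mathsf{d}_j|\,\opnorm{\SigmaHat}$ used there). The only difference is cosmetic: you explicitly verify that $\rTilde_{j,(i)}$ itself lies in the Lipschitz domain and note the extension to the closed interval, which the paper leaves implicit.
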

We note that we could replace the assumption concerning the Lipschitz property of $\psi'$ on $(-{\cal B}_n(i),{\cal B}_n(i))$ by saying that $\psi'$ has modulus of continuity $\omega_n$ when restricted to this interval and putting growth condition on this modulus. We chose not to do this to simplify the exposition. 
\begin{proof}
By definition, we have
$$
|\gamma^*(X_j,\betaHat_{(i)},\eta_i)-\rTilde_{j,(i)}|\leq |X_j\trsp \eta_i|\;.
$$
Of course,
$$
X_j\trsp \eta_i=\psi(\prox_{c_i}(\rho)(\rTilde_{i,(i)})) \frac{1}{n}X_j\trsp (S_i+\tau \id_p)^{-1}X_i\;.
$$

Therefore,
$$
\sup_{j\neq i}|\gamma^*(X_j,\betaHat_{(i)},\eta_i)|\leq \sup_{j\neq i}\left[|\eps_j-X_j\trsp \betaHat_{(i)}|+|X_j\trsp \eta_i|\right]
$$
We call
$$
{\cal B}_n(i)=\sup_{j\neq i}\left[|\eps_j-X_j\trsp \betaHat_{(i)}|+|X_j\trsp \eta_i|\right]\;.
$$

Since, by assumption, $\psi'$ is $L({\cal B}_n(i))$-Lipschitz on $(-{\cal B}_n(i),{\cal B}_n(i))$. Then
$$
\sup_{j\neq i}\left|\psi'(\gamma^*(X_j,\betaHat_{(i)},\eta_i))-\psi'(\rTilde_{j,(i)})\right|\leq L({\cal B}_n(i)) \sup_{j\neq i}|X_j\trsp \eta_i|\;.
$$

The bound for $\norm{{\mathcal R}_i}$ follows immediately.

\end{proof}

\subsection{Probabilistic aspects}
We can rewrite the bound on $\norm{{\mathcal R}_i}$ as 
$$
\norm{{\mathcal R}_i}\leq
  \left[\sup_{j\neq i}\frac{|X_j\trsp (S_i+\tau \id_p)^{-1}X_i|}{n}\right]
\frac{L({\cal B}_n(i))}{\sqrt{n}\tau}\frac{\norm{X_i}}{\sqrt{n}} \opnorm{\SigmaHat} \left(\left[|\psi(\rTilde_{i,(i)})|\wedge |\rTilde_{i,(i)}|/c_i\right] |\psi(\prox_{c_i}(\rho)(\rTilde_{i,(i)})|\right)\;.
$$

The bound on $\norm{{\mathcal R}_i}$ is encouraging since it shows that we can control $\norm{\betaHat-\betaTilde_i}$ in $L_k$ provided we can control each terms in the product in $L_{5k}$: indeed, for a product of $k$ random variables $\{W_j\}_{j=1}^k$, we have $\Exp{|\prod_{j=1}^k W_j|}\leq \prod_{j=1}^k \left[\Exp{|W_j|^k}\right]^{1/k}$ by H\"older's inequality. In particular, we will later need control of $\Exp{\norm{\betaHat-\betaTilde_i}^2}$ and will therefore require subsequent bounds to in $L_{10}$. 

\subsubsection{On $\sup_{j\neq i}|X_j \trsp (S_i+\tau \id)^{-1} X_i/n|$}
We will control $X_j\trsp (S_i+\tau \id)^{-1} X_i/n$ by appealing to Lemma \ref{lemma:controlRandomLipFuncConcRVsLp}.
\begin{lemma}\label{lemma:keyLemmaControlXjtransposeetai}
Suppose $X_i$ are independent and satisfy the concentration assumptions mentioned above. 
Then 
$$
\sup_{j\neq i} |X_j\trsp (S_i+\tau \id)^{-1} X_i/n|\leq \frac{1}{\sqrt{n}} \sup_{j \neq i} \frac{\norm{X_j}}{\tau\sqrt{n}} \polyLog(n)/c^{1/2}_n
$$
in $L_{10}$, provided we control $\sup_{j \neq i} \frac{\norm{X_j}}{\tau\sqrt{n}}$ in $\sqrt{L_{20}}$. 
\end{lemma}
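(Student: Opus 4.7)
My plan rests on a single structural observation. By definition, $\betaHat_{(i)}$ is a function of $(X_k,\eps_k)_{k\neq i}$ only, hence so are the residuals $\rTilde_{j,(i)}$ and the matrix $S_i$. Combined with \textbf{O4}--\textbf{O5}, this makes $S_i$ independent of $X_i$, and the entire proof is then a one-shot conditioning argument on the $\sigma$-field
$$\mathcal{F}_{(i)} = \sigma\bigl(\{X_k\}_{k\neq i},\{\eps_k\}_{k=1}^n\bigr).$$
Under this conditioning the matrix $A := (S_i+\tau\id_p)^{-1}$ and every vector $X_j$ with $j\neq i$ are frozen, while $X_i$ retains its unconditional law and hence the concentration property granted by \textbf{O4}.

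For each fixed $j\neq i$, I rewrite
$$\frac{X_j\trsp(S_i+\tau\id)^{-1}X_i}{n} = \frac{(A X_j)\trsp X_i}{n}.$$
Conditional on $\mathcal{F}_{(i)}$, the right-hand side is a linear, mean-zero functional of $X_i$ (using $\Exp{X_i}=0$) whose Lipschitz constant is $\norm{A X_j}/n$. Since $S_i\succeq 0$ we have $\opnorm{A}\leq 1/\tau$, so $\norm{A X_j}\leq \norm{X_j}/\tau$. I then invoke Lemma \ref{lemma:controlRandomLipFuncConcRVsLp} for the family of $n-1$ linear functionals $\{X_i\mapsto (A X_j)\trsp X_i/n\}_{j\neq i}$ under the conditional law of $X_i$: the sub-Gaussian tail in \textbf{O4} yields, conditionally on $\mathcal{F}_{(i)}$ and in $L_{10}$,
$$\sup_{j\neq i}\bigl|(A X_j)\trsp X_i/n\bigr|\;\lesssim\;\frac{\polyLog(n)}{\sqrt{c_n}}\sup_{j\neq i}\frac{\norm{A X_j}}{n}\;\leq\;\frac{\polyLog(n)}{\sqrt{c_n}}\sup_{j\neq i}\frac{\norm{X_j}}{n\tau},$$
where $\polyLog(n)$ absorbs the union bound over $n-1$ sub-Gaussians and $1/\sqrt{c_n}$ is the natural scale attached to the $\exp(-c_n t^2)$ tail. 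Factoring out one $1/\sqrt{n}$ gives exactly the form displayed in the statement.

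To promote this conditional $L_{10}$ bound to an unconditional one, I apply H\"older's inequality: the right-hand side is the a.s.\ random quantity $\sup_{j\neq i}\norm{X_j}/(\tau\sqrt{n})$ multiplied by the deterministic factor $\polyLog(n)/\sqrt{c_n n}$, so requiring the supremum to lie in $L_{20}$ is exactly what is needed to keep the product in $L_{10}$—this is precisely the $\sqrt{L_{20}}$ hypothesis appearing in the statement. The main obstacle, and really the only non-trivial step, is the quantitative content of Lemma \ref{lemma:controlRandomLipFuncConcRVsLp}: verifying that the union bound over the $n-1$ conditional tails $C_n\exp(-c_n t^2 n^2/\norm{X_j}^2)$ pays only a $\polyLog(n)$ price when those tails are converted into $L_{10}$ moments. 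Granting that lemma, everything else here is an elementary leave-one-out computation.
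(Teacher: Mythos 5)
Your proof is correct and follows essentially the same route as the paper: condition on everything except $X_i$ (so that $S_i$ and the $X_j$, $j\neq i$, are frozen), observe that $X_i\mapsto X_j\trsp(S_i+\tau\id)^{-1}X_i/n$ is linear with mean zero and Lipschitz constant at most $\norm{X_j}/(n\tau)$ since $\opnorm{(S_i+\tau\id)^{-1}}\leq 1/\tau$, invoke Lemma \ref{lemma:controlRandomLipFuncConcRVsLp} for the supremum over the $n-1$ functionals, and then integrate out the conditioning using H\"older with the $\sqrt{L_{20}}$ control of $\sup_{j\neq i}\norm{X_j}/(\tau\sqrt{n})$. The only cosmetic difference is that you make the H\"older step explicit, whereas the paper delegates the handling of the random Lipschitz constant to the statement of Lemma \ref{lemma:controlRandomLipFuncConcRVsLp} itself.
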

We use the perhaps slightly unusual notation $\sqrt{L_{20}}$ to simply say that we control $\sqrt{\Exp{Z^{20}}}$ for a random variable $Z$. 
\begin{proof}
Let us work conditionally on $X_{(i)}=\{X_1,\ldots,X_{i-1},X_{i+1},\ldots,X_n\}$. Call $v_{j,(i)}= (S_i+\tau \id)^{-1} X_j$. The map $F_j(X_i)=X_j\trsp (S_i+\tau \id)^{-1} X_i=X_i\trsp v_{j,(i)}$ is Lipschitz (as a function of $X_i$) with Lipschitz constant 
$\sqrt{X_j\trsp (S_i+\tau \id)^{-2}X_j}\leq \norm{X_j}/\tau$. Indeed, it is linear in $X_i$.
Therefore, using Lemma \ref{lemma:controlRandomLipFuncConcRVsLp}, we see that 
$$
\frac{1}{n}\sup_{j\neq i} |X_j\trsp (S_i+\tau \id)^{-1} X_i||X_{(i)}\leq \frac{1}{\sqrt{n}} \sup_j \frac{\norm{X_j}}{\tau\sqrt{n}} \sqrt{\polyLog n/c_n}+\sup_j |m_{F_j}|\;. 
$$
with overwhelming ($X_i$)-probability and in $L_{10}$. Recall that in Lemma \ref{lemma:controlRandomLipFuncConcRVsLp}, we have a choice between the mean and the median for the definition of $m_{F_j}$. Here we choose the mean. 

Since $X_i$ has mean 0, we see that $m_{F_j}=0$, so that 
$$
\frac{1}{n}\sup_{j\neq i} |X_j\trsp v_{j,(i)}| |X_{(i)}\leq \frac{1}{\sqrt{n}} \sup_{j \neq i} \frac{\norm{X_j}}{\tau\sqrt{n}} \sqrt{\polyLog n/c_n}
$$
with overwhelming ($X_i$)-probability and in $L_{10}$. We can then integrate over $X_{(i)}$ to get the result. 
\end{proof}

We note that using the fact that $X_j\rightarrow \norm{X_j}/\sqrt{n}$ is $n^{-1/2}$-Lipschitz we see that 
$$
\sup_{j\neq i}|\norm{X_j}/\sqrt{n}-m_{\norm{X_j}/\sqrt{n}}|\leq \polyLog(n)/(\sqrt{nc_n}) \text{ in } \sqrt{L_{20}}\;.
$$
Recall that $\scov{X_i}=\id_p$. So $m_{\norm{X_j}/\sqrt{n}}$ is of order 1 in the case we are interested in, we see that 
$$
\sup_{j\neq i}|\norm{X_j}/\sqrt{n}|=\gO_{\sqrt{L_{20}}}(1)\;,
$$
provided $nc_n \gg \polyLog(n)$. This is clearly the case under our assumptions.

\subsubsection{Control of the residuals $R_i$ and $\rTilde_{i,(i)}$}
Our aim here is to show that we can control $\sup_i |R_i|$, where $R_i=\eps_i-X_i\trsp \betaHat$ are the residuals from the full ridge-regression model. This will allow us to achieve control of ${\mathcal B}_n(i)$. As $\rTilde_{i,(i)}$ is much easier to understand than $R_i$, our strategy is to relate the two. 
\begin{lemma}\label{lemma:BoundRiFromrTildeii}
We have the deterministic bound
\begin{equation}\label{eq:ControlRi}
|R_i|\leq |\rTilde_{i,(i)}|+\frac{\norm{X_i}^2}{n}\frac{1}{\tau} |\psi(\rTilde_{i,(i)})|\;.
\end{equation}
Denoting by ${\mathcal E}_n=\sup_{1\leq i \leq n} |\eps_i|$, we have under our assumptions on $\{X_i\}_{i=1}^n$, 
$$
\sup_{1\leq i \leq n}|\rTilde_{i,(i)}|\leq  {\mathcal E}_n+[\norm{W_{n}}+\frac{1}{n}\sup_{1\leq i\leq n} \norm{X_i}|\psi({\mathcal E}_n)\vee \psi(-{\mathcal E}_n)|] \polyLog(n)/\sqrt{c_n}\text{ in }L_k\;.
$$
Under the assumption that $|\psi(x)|=\gO(|x|^m)$ for some fixed $m$ at infinity, we have 
$$
\sup_i |R_i|\leq K (\sup_i |\rTilde_{i,(i)}|)^{m\vee 1}) \text{ in } L_k\;,
$$
and 
$\norm{W_{n}}+\frac{1}{n}\sup_{1\leq i\leq n} \norm{X_i}|\psi({\mathcal E}_n)\vee \psi(-{\mathcal E}_n)|=\gO_{L_k}(\norm{W_{n}}+{\mathcal E}_n^m/\sqrt{n})$.
\end{lemma}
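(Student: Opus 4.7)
The plan is to obtain the deterministic bound first, then feed the concentration estimates and the leave-one-out norm bound from Lemma \ref{lemma:betaHatIsBounded} into that bound.

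For the deterministic inequality, I would apply Lemma \ref{lemma:ControlApproxBetaHat} with $\beta_1=\betaHat_{(i)}$. Since $f_i(\betaHat_{(i)})=0$ by definition, one has
\[
f(\betaHat_{(i)}) = f(\betaHat_{(i)})-f_i(\betaHat_{(i)}) = -\tfrac{1}{n}X_i\,\psi(\rTilde_{i,(i)}),
\]
so Lemma \ref{lemma:ControlApproxBetaHat} gives $\norm{\betaHat-\betaHat_{(i)}}\leq \tfrac{\norm{X_i}}{n\tau}|\psi(\rTilde_{i,(i)})|$. Cauchy--Schwarz then yields $|R_i-\rTilde_{i,(i)}|=|X_i\trsp(\betaHat-\betaHat_{(i)})|\leq \norm{X_i}\cdot\norm{\betaHat-\betaHat_{(i)}}$, which is exactly \eqref{eq:ControlRi}. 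This step is completely algebraic; it requires nothing beyond what has already been established.

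For the probabilistic control of $\sup_i|\rTilde_{i,(i)}|$, the key observation is that, conditionally on $X_{(i)}$ and $\eps_{(i)}$, $\betaHat_{(i)}$ is deterministic and independent of $(X_i,\eps_i)$ by \textbf{O5}, \textbf{O7}, so the map $X_i\mapsto X_i\trsp\betaHat_{(i)}$ is linear with Lipschitz constant $\norm{\betaHat_{(i)}}$ and mean $0$. Applying the concentration assumption \textbf{O4} and Lemma \ref{lemma:controlRandomLipFuncConcRVsLp} (in its $L_{k}$, supremum-over-index form) conditionally on $X_{(i)}$, and then integrating, produces
\[
\sup_{1\leq i\leq n}|X_i\trsp\betaHat_{(i)}|\ \lesssim\ \sup_{1\leq i\leq n}\norm{\betaHat_{(i)}}\cdot\polyLog(n)/\sqrt{c_n}\quad\text{in }L_k.
\]
Combining this with $|\eps_i|\leq{\mathcal E}_n$ controls $\sup_i|\rTilde_{i,(i)}|$. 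To finish, Lemma \ref{lemma:betaHatIsBounded} applied to the leave-one-out problem gives $\norm{\betaHat_{(i)}}\leq \tau^{-1}\norm{W_{n,(i)}}$; writing $W_{n,(i)}=W_n-\tfrac{1}{n}X_i\psi(\eps_i)$ and bounding $|\psi(\eps_i)|\leq|\psi(\mathcal E_n)|\vee|\psi(-\mathcal E_n)|$ from \textbf{O7} gives the stated bound (with the $1/\tau$ absorbed into $\polyLog(n)$).

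For the final two assertions, the bound on $\sup_i|R_i|$ follows directly from \eqref{eq:ControlRi}, the growth hypothesis $|\psi(x)|=\gO(|x|^m)$ in \textbf{O3}, and the fact that $\sup_i\norm{X_i}^2/n=\gO_{L_k}(1)$ by the concentration remark made right after Lemma \ref{lemma:keyLemmaControlXjtransposeetai}. The last identity in the statement uses the same growth bound together with that control of $\sup_i\norm{X_i}/\sqrt{n}$, together with H\"older to pass to $L_k$. The main obstacle is purely bookkeeping: one must verify that the $\sup_{i}$ can be taken inside the concentration estimate without worse-than-$\polyLog$ losses (a union bound over $n$ indices against an $\exp(-c_n t^2)$ tail), and that all these factors remain under control in $L_{10}$, as required downstream in the approximation of $\norm{\betaHat-\betaTilde_i}$. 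No single step is conceptually deep; the work is in chaining the concentration inequality, the ridge norm bound, and the deterministic residual identity while keeping uniform $L_k$ control.
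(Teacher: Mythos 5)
Your proposal is correct and follows essentially the same route as the paper: the same identity $f(\betaHat_{(i)})=-\tfrac{1}{n}X_i\psi(\rTilde_{i,(i)})$ yields the deterministic bound (the paper bounds $|X_i\trsp(\mathsf{S}+\tau\id)^{-1}X_i|\leq\norm{X_i}^2/\tau$ directly where you use Cauchy--Schwarz, which is the same estimate), and the probabilistic part likewise combines Lemma \ref{lemma:controlRandomLipFuncConcRVsLp} applied to the linear form $X_i\trsp\betaHat_{(i)}$ with $\norm{\betaHat_{(i)}}\lesssim\norm{W_{n,(i)}}$ and $\norm{W_{n,(i)}}\leq\norm{W_n}+\norm{X_i}|\psi(\eps_i)|/n$. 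No gap.
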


\begin{proof}
Recall the representation
$$
\beta_1-\beta_2=(\mathsf{S}_{\beta_1,\beta_2}+\tau \id_p)^{-1}\left(f(\beta_1)-f(\beta_2)\right)\;.
$$
Take $\beta_1=\betaHat$ and $\beta_2=\betaHat_{(i)}$.
Note that
$$
f(\betaHat_{(i)})=-\frac{1}{n}X_i \psi(\rTilde_{i,(i)})-\frac{1}{n}\sum_{j\neq i} X_i \psi(\rTilde_{j,(i)})+\tau \betaHat_{(i)}=-\frac{1}{n}X_i \psi(\rTilde_{i,(i)})
$$
by definition of $\betaHat_{(i)}$. Therefore,
$$
\betaHat-\betaHat_{(i)}=\frac{1}{n}(\mathsf{S}_{\betaHat,\betaHat_{(i)}}+\tau \id_p)^{-1}X_i \psi(\rTilde_{i,(i)})\;.
$$
Since $\rTilde_{i,(i)}-R_i=X_i\trsp (\betaHat-\betaHat_{(i)})$, we also have
$$
|\rTilde_{i,(i)}-R_i|\leq \frac{\norm{X_i}^2}{n}\frac{1}{\tau} |\psi(\rTilde_{i,(i)})|\;.
$$
We conclude that
$$
|R_i|\leq |\rTilde_{i,(i)}|+\frac{\norm{X_i}^2}{n}\frac{1}{\tau} |\psi(\rTilde_{i,(i)})|\;.
$$
Now under assumptions, we have $\sup_{1\leq i\leq n}|\norm{X_i}^2/n-1|=\gO_{L_k}(\polyLog(n)/\sqrt{nc_n})$, according to either Lemma \ref{lemma:controlRandomLipFuncConcRVsLp} or Lemma \ref{lemma:RandomQuadFormsBounds}.
Using the fact that $\norm{\betaHat_{(i)}}\leq \norm{W_{n,(i)}}$ (see Lemma \ref{lemma:betaHatIsBounded}), the independence of $X_i$ and $\betaHat_{(i)}$, we have,
through Lemma \ref{lemma:controlRandomLipFuncConcRVsLp}, 
$$
\sup_{1\leq i \leq n} |X_i\trsp \betaHat_{(i)}|\leq \sup_{1\leq i\leq n} \norm{W_{n,(i)}} \polyLog(n)/\sqrt{c_n}\;.
$$
Since $\norm{W_{n,(i)}} \leq \norm{W_{n}}+\norm{X_i}|\psi(\eps_i)|/n$, we have

$$
|\rTilde_{i,(i)}|\leq |\eps_i|+[\norm{W_{n}}+\sup_i \norm{X_i}|\psi(\eps_i)|/n)] \polyLog(n)/\sqrt{c_n} \text{in } L_k\;.
$$
Denoting by ${\mathcal E}_n=\sup_{1\leq i \leq n} |\eps_i|$, we have, using the fact that $\psi$ is increasing, 
$$
\sup_{1\leq i \leq n}|\rTilde_{i,(i)}|\leq  {\mathcal E}_n+\left[\norm{W_{n}}+\frac{1}{n^{1/2}}\sup_{1\leq i\leq n} \frac{\norm{X_i}}{n^{1/2}}|\psi({\mathcal E}_n)\vee \psi(-{\mathcal E}_n)|\right] \polyLog(n)/\sqrt{c_n}\text{ in }L_k\;,
$$
for any given $k$. 
We note that if $|\psi(x)|=\gO(x^m)$ at $\infty$, we have the bound $\sup_{1\leq i\leq n}|R_i| \lesssim \sup_{1\leq i\leq n}|\rTilde_{i,(i)}|^{m\vee 1}$ and therefore,

$$
\sup_{1\leq i\leq n}|R_i|\lesssim \left[{\mathcal E}_n+\polyLog(n)/\sqrt{c_n}[\norm{W_{n}}+\frac{1}{n}\sup_{1\leq i\leq n} \norm{X_i}|\psi({\mathcal E}_n)\vee \psi(-{\mathcal E}_n)|\right]^{m \vee 1} \text{ in }L_k\;,
$$
provided the bound on $\sup_{1\leq i \leq n}|\rTilde_{i,(i)}|$ holds in $L_{mk}$. Note that this is guaranteed under our assumptions. Of course, here we are using control of $\sup_i \norm{X_i}^2/n$, which we get by controlling $\norm{X_i}/\sqrt{n}$ through concentration arguments.
The fact that $\sup_i \norm{X_i}/\sqrt{n}=\gO_{L_k}(1)$ gives us the last statement of the lemma.
\end{proof}
\textbf{Remark 1: } at the gist of the bound on $\rTilde_{i,(i)}$ is a uniform bound on $\norm{\betaHat_{(i)}}$ in $L_k$. If one is not concerned about having assumptions that limit the existence of moments for $\sqrt{1/n\sum_{i=1}^n \rho(\eps_i)}$, one could use the bound $\sup_i \norm{\betaHat_{(i)}}\leq \sqrt{2/\tau}\sqrt{1/n\sum_{i=1}^n \rho(\eps_i)}$ which is immediate from Lemma \ref{lemma:betaHatIsBounded}. This would change slightly the appearance of our bounds on $\sup_i |\rTilde_{i,(i)}|$. In particular, under our assumptions, this bound is valid.\\
\textbf{Remark 2: } We note that a similar result holds of course for $\rTilde_{j,(i)}$. More precisely,
$$
|\rTilde_{j,(i)}-R_j|\leq \left|\frac{1}{n}X_j\trsp(\mathsf{S}_{\betaHat,\betaHat_1}+\tau \id_p)^{-1}X_i\right| \left|\psi(\rTilde_{i,(i)})\right|\;,
$$
and hence,
$$
|\rTilde_{j,(i)}-R_j|\leq \frac{\norm{X_j}\norm{X_i}}{n\tau} \left|\psi(\rTilde_{i,(i)})\right|\;.
$$
Of course, this bound is very coarse and we will see that we can get a better one later.

However, this finally allows us to have the following proposition
\begin{proposition}\label{prop:ControlBni}
Under the assumption that $|\psi(x)|=\gO(|x|^m)$, we have the bound 	
$$
{\cal B}_n(i)\leq K\left[ {\mathcal E}_n+(\norm{W_{n}}+\frac{{\mathcal E}_n^m}{\sqrt{n}}) \polyLog(n)/\sqrt{c_n}\right]^{m\vee 1}\text{ in }L_k\;,
$$
where $K$ is a constant independent of $p$ and $n$. When $\norm{W_{n}}$ and $\frac{{\mathcal E}_n^m}{\sqrt{n}}$ are bounded in $L_k$, this bound simply becomes 
$$
{\cal B}_n(i)\leq K \left[{\mathcal E}_n\vee \polyLog(n)/\sqrt{c_n}\right]^{m\vee 1}\text{ in }L_k\;.
$$
The same bound holds for $\sup_{i}{\cal B}_n(i)$ in $L_k$\;.
\end{proposition}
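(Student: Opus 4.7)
The plan is to split ${\cal B}_n(i)=\sup_{j\neq i}\bigl[|\rTilde_{j,(i)}|+|X_j\trsp\eta_i|\bigr]$ into its two natural pieces and bound each in $L_k$, then combine and apply the growth assumption $|\psi(x)|=\gO(|x|^m)$ at infinity.

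First I would dispose of $\sup_{j\neq i}|\rTilde_{j,(i)}|$. The cleanest route is the deterministic comparison from Remark 2 following Lemma \ref{lemma:BoundRiFromrTildeii}: $|\rTilde_{j,(i)}-R_j|\leq \frac{\norm{X_j}\norm{X_i}}{n\tau}|\psi(\rTilde_{i,(i)})|$. Hence $\sup_{j\neq i}|\rTilde_{j,(i)}|\leq \sup_{1\leq j\leq n}|R_j|+\frac{1}{n\tau}\bigl(\sup_j\norm{X_j}\bigr)\norm{X_i}\,|\psi(\rTilde_{i,(i)})|$. Lemma \ref{lemma:BoundRiFromrTildeii} already controls $\sup_j|R_j|$ in $L_k$ by $K\bigl[{\mathcal E}_n+(\norm{W_n}+{\mathcal E}_n^m/\sqrt n)\polyLog(n)/\sqrt{c_n}\bigr]^{m\vee 1}$, which is exactly the shape we want. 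The correction term is lower-order because $\sup_i\norm{X_i}/\sqrt n=\gO_{L_k}(1)$ (concentration) and $|\psi(\rTilde_{i,(i)})|\lesssim |\rTilde_{i,(i)}|^m$ is bounded in $L_k$ by the same right-hand side through Lemma \ref{lemma:BoundRiFromrTildeii}.

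For $\sup_{j\neq i}|X_j\trsp\eta_i|$, I use the explicit form $X_j\trsp\eta_i=\psi(\prox_{c_i}(\rho)(\rTilde_{i,(i)}))\cdot\tfrac{1}{n}X_j\trsp(S_i+\tau\id)^{-1}X_i$. Lemma \ref{lemma:keyLemmaControlXjtransposeetai} already gives $\sup_{j\neq i}|X_j\trsp(S_i+\tau\id)^{-1}X_i|/n=\gO_{L_{10}}(\polyLog(n)/(\sqrt{n c_n}))$, and by Lemma \ref{lemma:ValueProxat0} we have $|\psi(\prox_{c_i}(\rho)(\rTilde_{i,(i)}))|\leq|\psi(\rTilde_{i,(i)})|$, which is again controlled via $|\psi(x)|=\gO(|x|^m)$ and the $L_k$ bound on $|\rTilde_{i,(i)}|$. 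Thus this piece is of order $1/\sqrt n$ times the first piece (up to $\polyLog$), so it is absorbed into the same right-hand side.

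The main obstacle I anticipate is bookkeeping of moment orders: since the final bound raises the inner bracket to the $(m\vee 1)$ power, and since $|\psi(\rTilde_{i,(i)})|$ brings an additional factor of $|\rTilde_{i,(i)}|^m$, the products that appear have to be controlled in $L_{rk}$ for a suitable $r$ (roughly $r=m(m\vee 1)$) before H\"older gives the claimed $L_k$ bound — exactly the issue flagged at the end of the proof of Lemma \ref{lemma:BoundRiFromrTildeii}. Under the assumption on $\eps_i$ in \textbf{O6}--\textbf{O7} and the $\polyLog$ control on $c_n$, all these higher-moment bounds are available, so the chain goes through. The statement for $\sup_i{\cal B}_n(i)$ then follows because the right-hand side is independent of $i$: the sup over $i$ only enters through $\sup_i|\rTilde_{i,(i)}|$ and $\sup_i\norm{X_i}/\sqrt n$, both of which have already been bounded uniformly in $i$ in the $L_k$ sense in Lemma \ref{lemma:BoundRiFromrTildeii}.
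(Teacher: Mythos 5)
Your proposal is correct and follows essentially the same route as the paper: decompose ${\cal B}_n(i)=\sup_{j\neq i}\bigl[|\rTilde_{j,(i)}|+|X_j\trsp \eta_i|\bigr]$, pass from $\rTilde_{j,(i)}$ to $R_j$ via the deterministic comparison in Remark 2 and the $L_k$ bounds of Lemma \ref{lemma:BoundRiFromrTildeii}, and dismiss the $|X_j\trsp\eta_i|$ term using the earlier control of $X_j\trsp(S_i+\tau\id_p)^{-1}X_i/n$. You supply more of the H\"older/moment bookkeeping than the paper's (very terse) proof does, which is a welcome addition rather than a divergence; the only quibble is that the correction term $\frac{\norm{X_j}\norm{X_i}}{n\tau}|\psi(\rTilde_{i,(i)})|$ is of the same order as the main term rather than genuinely lower-order, but as you note it is absorbed into the same right-hand side, so nothing breaks.
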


\begin{proof}
The result follows easily from the fact that 
$$
{\cal B}_n(i)=\sup_{j\neq i}\left[|\rTilde_{j,(i)}|+|X_j\trsp \eta_i|\right]\;,
$$
the fact that 
$$
\sup_i \sup_{j\neq i}|\rTilde_{j,(i)}-R_j|\leq \sup_i \sup_j \frac{\norm{X_j}\norm{X_i}}{n\tau} \left|\psi(\rTilde_{i,(i)})\right|\;,
$$		
and the bounds on $\sup_i|R_i|$ we have derived earlier. The part concerning $\sup_i \sup_{j\neq i}|X_j\trsp \eta_i|$ is easily shown to be negligible compared to this quantity from our previous investigations concerning $X_j\trsp (S_i+\tau \id_p)^{-1}X_i$. 
\end{proof}

\subsubsection{Consequences}
We have the following result. Recall that $\psi'$ is assumed to be Lipschitz with Lipschitz constant $L(u)$ on $(-|u|,|u|)$.
\begin{proposition}\label{prop:controlNormRiStochUnderAssumptions}
Suppose that $|\psi(x)|=\gO(|x|^m)$ and ${\cal E}^m_n=\lo(\sqrt{n})$ in $L_k$. Suppose further that $L(x)\leq K |x|^{m_1}$. Then we have 
$$
\norm{{\cal R}_i}\leq K \frac{\polyLog(n)}{nc_n}\left({\mathcal E}_n\vee (c_n)^{-1/2}\polyLog(n)\right)^{2m+m_1} \text{ in } L_k\;.
$$	
In particular, if ${\mathcal E}_n=\polyLog(n)$ and $1/c_n=\gO(\polyLog(n))$, we have 
$$
\norm{{\cal R}_i}\leq K \frac{\polyLog(n)}{n} \text{ in } L_k\;.
$$
Furthermore, the same bounds hold for $\sup_i \norm{{\cal R}_i}$. 
\end{proposition}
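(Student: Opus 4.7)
My plan is to combine the deterministic product bound for $\|\mathcal{R}_i\|$ obtained at the end of Section 3.2, namely
$$
\|\mathcal{R}_i\|\leq \Bigl[\sup_{j\neq i}\tfrac{|X_j\trsp(S_i+\tau\id_p)^{-1}X_i|}{n}\Bigr]\cdot\tfrac{L(\mathcal{B}_n(i))}{\sqrt{n}\tau}\cdot\tfrac{\norm{X_i}}{\sqrt{n}}\cdot\opnorm{\SigmaHat}\cdot\bigl([|\psi(\rTilde_{i,(i)})|\wedge |\rTilde_{i,(i)}|/c_i]\,|\psi(\prox_{c_i}(\rho)(\rTilde_{i,(i)}))|\bigr),
$$
with $L_q$-moment control of each of the six factors and an application of H\"older's inequality, exactly in the spirit of the remark at the top of Section 3.3. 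Since the product has six factors, I will need each factor in roughly $L_{6k}$ in order to conclude in $L_k$; by taking $q$ a bit larger whenever convenient, all $n^{1/q}$ losses that appear when converting fixed-index bounds into $\sup_i$ or $\sup_{j\neq i}$ bounds can be absorbed into the final $\polyLog(n)$.

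The factor-by-factor controls are essentially already in hand. Factor (i) is bounded by $\polyLog(n)/\sqrt{nc_n}$ in $L_q$ by Lemma \ref{lemma:keyLemmaControlXjtransposeetai}, provided $\sup_{j\neq i}\norm{X_j}/\sqrt{n}=\gO_{L_q}(1)$, which follows from Assumption \textbf{O4} and the mean-zero identity-covariance normalization. The factor $\norm{X_i}/\sqrt{n}$ is $\gO_{L_q}(1)$ by the same concentration argument, and $\opnorm{\SigmaHat}=\gO_{L_q}(1)$ under concentration assumptions by standard random-matrix arguments (as used in \cite{nekCorrEllipD}). For the $\psi$-factors, Lemma \ref{lemma:ValueProxat0} gives $|\prox_{c_i}(\rho)(\rTilde_{i,(i)})|\leq|\rTilde_{i,(i)}|$, so the growth condition $|\psi(x)|\leq K|x|^m$ bounds both $|\psi(\rTilde_{i,(i)})|$ and $|\psi(\prox_{c_i}(\rho)(\rTilde_{i,(i)}))|$ by $K|\rTilde_{i,(i)}|^m$; Lemma \ref{lemma:BoundRiFromrTildeii} then supplies $|\rTilde_{i,(i)}|\lesssim \mathcal{E}_n \vee c_n^{-1/2}\polyLog(n)$ in $L_q$, using that $\norm{W_n}=\gO_{L_q}(1)$ by Lemma \ref{lemma:betaHatIsBounded} and the hypothesis $\mathcal{E}_n^m=\lo(\sqrt{n})$ in $L_k$. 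Finally, for $L(\mathcal{B}_n(i))\leq K\mathcal{B}_n(i)^{m_1}$, Proposition \ref{prop:ControlBni} yields a polynomial in $\mathcal{E}_n \vee c_n^{-1/2}\polyLog(n)$ of the required degree.

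Multiplying the six bounds, the purely deterministic prefactors combine to $\polyLog(n)/(nc_n)$, while the three factors carrying $|\rTilde_{i,(i)}|$ and $\mathcal{B}_n(i)$ contribute a total polynomial exponent $2m+m_1$ in $\mathcal{E}_n \vee c_n^{-1/2}\polyLog(n)$, giving the displayed bound in $L_k$. The two special cases follow at once: when $\mathcal{E}_n=\polyLog(n)$ and $1/c_n=\gO(\polyLog(n))$, the factor $(\mathcal{E}_n \vee c_n^{-1/2}\polyLog(n))^{2m+m_1}$ is itself $\polyLog(n)$ and is reabsorbed into the overall polylogarithm, leaving $\polyLog(n)/n$. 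The extension from $\|\mathcal{R}_i\|$ to $\sup_i\|\mathcal{R}_i\|$ uses $\Exp{\sup_i\|\mathcal{R}_i\|^k}\leq n\,\sup_i\Exp{\|\mathcal{R}_i\|^k}$, with the $n^{1/k}$ inflation absorbed into $\polyLog(n)$ by working at a slightly larger moment.

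The main obstacle I expect is purely the book-keeping: each factor has to be upgraded to the same (high enough) $L_q$-window so that H\"older applies cleanly and the final dependence on $c_n$ and on the suprema $\mathcal{E}_n$, $\sup_j\norm{X_j}/\sqrt{n}$, $\sup_i\norm{\betaHat_{(i)}}$, and $\sup_{j\neq i}|\rTilde_{j,(i)}|$ tracks correctly; in particular, the polynomial compounding through $L(\mathcal{B}_n(i))$ combined with the two $\psi$-factors is where the exponent $2m+m_1$ is generated, and one must be careful not to spuriously raise this exponent (for instance, by invoking the cruder $\sup_i|R_i|$ bound instead of the sharper $\sup_i|\rTilde_{i,(i)}|$ bound). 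Once these moment-accounting issues are organized, the proposition follows from a direct multiplication.
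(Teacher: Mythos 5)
Your proposal follows essentially the same route as the paper: the proof there is exactly an aggregation of the factor-by-factor bounds from Lemmas \ref{lemma:keyLemmaControlXjtransposeetai}, \ref{lemma:BoundRiFromrTildeii} and Proposition \ref{prop:ControlBni} via H\"older's inequality, with the $\sup_i$ version handled by the same moment-inflation device. The one slip is your claim that $\opnorm{\SigmaHat}=\gO_{L_q}(1)$: under Assumptions \textbf{O1}--\textbf{O7} alone (P1 is not in force in this part, and $c_n$ may decay like $1/\polyLog(n)$), the $\eps$-net argument of Lemma \ref{lemma:controlSpectralNorms} only gives $\opnorm{\SigmaHat}=\gO_{L_k}(c_n^{-1/2})$, and this extra $c_n^{-1/2}$ is precisely what combines with the $c_n^{-1/2}$ from Lemma \ref{lemma:keyLemmaControlXjtransposeetai} to produce the $1/(n c_n)$ prefactor; since your final arithmetic already lands on $\polyLog(n)/(nc_n)$, this is an internal inconsistency in the write-up rather than a gap in the bound, but you should cite the $c_n^{-1/2}$ spectral-norm estimate rather than an unjustified $\gO(1)$ one.
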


\begin{proof}
The proof follows by aggregating all the intermediate results we had and noticing that under our assumptions, $\opnorm{\SigmaHat}=\gO_{L_k}(c_n^{-1/2})$. This latter result follows easily from a standard $\eps$-net and union bound argument for controlling $\opnorm{\SigmaHat}$ - see e.g \cite{TalagrandSpinGlassesBook03}, Appendix A.4. We provide some details in Lemma \ref{lemma:controlSpectralNorms}. 

The statement concerning $\sup_i \norm{{\cal R}_i}$ follows by the same method. 
\end{proof}

We have the following very important corollary.
\begin{corollary}\label{coro:AggregResApproxBetaHatByBetaTildeIncl}
Under Assumptions \textbf{O1}-\textbf{O7}, we have 
$$
\norm{\betaHat-\betaTilde_i}=\gO_{L_k}(\frac{\polyLog(n)}{n})
$$
In particular, we have 
$$
\Exp{\norm{\betaHat-\betaTilde_i}^2}=\gO(\polyLog(n)/n^2)\;.
$$

Also, 
$$
\sup_{1\leq i\leq n} \sup_{ j\neq i}|\rTilde_{j,(i)}-R_j|=\gO_{L_k}(\frac{\polyLog(n)}{n^{1/2}})\;.
$$

Finally, 
$$
\sup_i |R_i-\prox_{c_i}(\rho)(\rTilde_{i,(i)})|=\gO_{L_k}(\frac{\polyLog(n)}{n^{1/2}})\;.
$$

\end{corollary}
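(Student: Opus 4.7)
}
The plan is to treat the three displayed bounds separately, reducing each to the machinery already built up.

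\emph{The first bound} is almost immediate: Proposition \ref{prop:ControlDeltaBetaHatBetaTildei} gives the deterministic inequality $\norm{\betaHat-\betaTilde_i}\leq \tau^{-1}\norm{{\mathcal R}_i}$, and Proposition \ref{prop:controlNormRiStochUnderAssumptions} provides the stochastic bound $\norm{{\mathcal R}_i}=\gO_{L_k}(\polyLog(n)/n)$ whenever ${\mathcal E}_n=\polyLog(n)$ and $1/c_n=\polyLog(n)$. Both of these last conditions are part of the standing assumptions \textbf{O1}--\textbf{O7} (specifically \textbf{O7} and \textbf{O4}). Squaring and taking expectations yields $\Exp{\norm{\betaHat-\betaTilde_i}^2}=\gO(\polyLog(n)/n^2)$.

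\emph{The third bound} comes for free from the first. Using $x-c\psi(\prox_c(\rho)(x))=\prox_c(\rho)(x)$, we read off from the proof of Proposition \ref{prop:ControlDeltaBetaHatBetaTildei} that $\eps_i-X_i\trsp\betaTilde_i=\prox_{c_i}(\rho)(\rTilde_{i,(i)})$. Hence
\[
R_i-\prox_{c_i}(\rho)(\rTilde_{i,(i)})= X_i\trsp(\betaTilde_i-\betaHat),
\]
and by Cauchy--Schwarz $|R_i-\prox_{c_i}(\rho)(\rTilde_{i,(i)})|\leq \norm{X_i}\norm{\betaTilde_i-\betaHat}$. Under our concentration assumption \textbf{O4}, $\sup_i\norm{X_i}/\sqrt{n}=\gO_{L_k}(1)$; combined with a union bound version of Part 1 (Proposition \ref{prop:controlNormRiStochUnderAssumptions} already produces uniform-in-$i$ control, at the cost of an extra $\polyLog(n)$ factor) this gives $\sup_i|R_i-\prox_{c_i}(\rho)(\rTilde_{i,(i)})|=\gO_{L_k}(\polyLog(n)/\sqrt{n})$.

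\emph{The second bound is the main obstacle.} Starting from the identity used in Lemma \ref{lemma:BoundRiFromrTildeii},
\[
R_j-\rTilde_{j,(i)}=-\frac{1}{n}X_j\trsp(\mathsf{S}_{\betaHat,\betaHat_{(i)}}+\tau\id_p)^{-1}X_i\,\psi(\rTilde_{i,(i)}),
\]
one sees that the factor $|\psi(\rTilde_{i,(i)})|$ is $\gO_{L_k}(\polyLog(n))$ by the growth assumption \textbf{O3} and Lemma \ref{lemma:BoundRiFromrTildeii}; a crude operator-norm estimate for $(\mathsf{S}_{\betaHat,\betaHat_{(i)}}+\tau\id_p)^{-1}$ only yields $\gO_{L_k}(1)$ for the whole quantity, so the required $1/\sqrt{n}$ must come from cancellation in the bilinear form. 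The strategy is to approximate the resolvent by a version $M_{(i,j)}=(S_{(i,j)}+\tau\id_p)^{-1}$ constructed from the leave-two-out estimator $\betaHat_{(i,j)}$, so that $M_{(i,j)}$ is independent of the pair $(X_i,X_j)$. The resolvent identity $A^{-1}-B^{-1}=-A^{-1}(A-B)B^{-1}$, together with the fact that $\mathsf{S}_{\betaHat,\betaHat_{(i)}}-S_{(i,j)}$ consists of a rank-two contribution $\frac{1}{n}\psi'(\cdot)(X_iX_i\trsp+X_jX_j\trsp)$ plus a term whose operator norm is controlled, via Lipschitzness of $\psi'$, by $L({\mathcal B}_n(i))\cdot\sup_k|\rTilde_{k,(i,j)}-\gamma_k^*|=\gO_{L_k}(\polyLog(n)/\sqrt{n})$, shows that replacing the true resolvent by $M_{(i,j)}$ in the bilinear form costs only $\gO_{L_k}(\polyLog(n)/n)$. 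Once $M_{(i,j)}$ is in place, I condition on $X_{(i,j)}$ and on $X_i$: the map $X_j\mapsto X_j\trsp M_{(i,j)}X_i$ is linear in $X_j$ with Lipschitz constant $\leq \norm{X_i}/\tau$ and mean $0$ (since $X_j$ has mean $0$ and $M_{(i,j)}X_i$ is $X_j$-independent), so Lemma \ref{lemma:controlRandomLipFuncConcRVsLp} gives
\[
\frac{1}{n}\bigl|X_j\trsp M_{(i,j)}X_i\bigr|\leq \frac{\polyLog(n)}{\sqrt{nc_n}}\cdot\frac{\norm{X_i}}{\tau\sqrt{n}}\qquad\text{in }L_k.
\]
Combining with $\norm{X_i}/\sqrt{n}=\gO_{L_k}(1)$, $1/c_n=\polyLog(n)$, and the $L_k$ bound on $|\psi(\rTilde_{i,(i)})|$, and absorbing a $\polyLog(n)$ factor from the double supremum over $i$ and $j\neq i$ via a union bound, yields $\sup_{i}\sup_{j\neq i}|\rTilde_{j,(i)}-R_j|=\gO_{L_k}(\polyLog(n)/\sqrt{n})$. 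The heart of the argument is the resolvent-identity step: I expect the bookkeeping of showing that $S_{(i,j)}-\mathsf{S}_{\betaHat,\betaHat_{(i)}}$ is small in operator norm, uniformly in $(i,j)$ and in $L_k$, to be where the proof actually spends its effort, since it requires reusing both the leave-one-out control from Part 1 and the Lipschitz/growth hypotheses on $\psi'$.
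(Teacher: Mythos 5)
Your treatment of the first and third displays is correct and coincides with the paper's: the first is just the concatenation of Proposition \ref{prop:ControlDeltaBetaHatBetaTildei} with Proposition \ref{prop:controlNormRiStochUnderAssumptions}, and the third reduces to the identity $R_i-\prox_{c_i}(\rho)(\rTilde_{i,(i)})=X_i\trsp(\betaTilde_i-\betaHat)$ plus Cauchy--Schwarz, exactly as in the text. The problem is your second display. There are two concrete issues with the leave-two-out resolvent comparison you propose. First, the rank-two perturbation $\frac{1}{n}\psi'(\cdot)(X_iX_i\trsp+X_jX_j\trsp)$ is \emph{not} small: its operator norm is of order $(\norm{X_i}^2/n)\sup_k\psi'(\gamma^*_k)=\gO(\polyLog(n))$, so the resolvent identity with a crude operator-norm estimate gives a cost of order $\polyLog(n)$ for this piece, not the $\gO_{L_k}(\polyLog(n)/n)$ you claim; it would have to be handled separately by Sherman--Morrison, and the resulting correction terms are of the same order as the main term rather than negligible. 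Second, and more seriously, to make the remaining full-rank part of $\mathsf{S}_{\betaHat,\betaHat_{(i)}}-S_{(i,j)}$ have operator norm $\gO_{L_k}(\polyLog(n)/\sqrt{n})$ you need $\sup_k|\gamma^*_k-\rTilde_{k,(i,j)}|=\gO_{L_k}(\polyLog(n)/\sqrt{n})$, where $\gamma^*_k$ lies between $R_k$ and $\rTilde_{k,(i)}$; controlling that quantity is precisely a (leave-two-out version of the) residual bound you are in the middle of proving, so as written the argument is circular. The only a priori bound available at that stage, from Remark 2, is $|\rTilde_{j,(i)}-R_j|\leq\norm{X_i}\norm{X_j}|\psi(\rTilde_{i,(i)})|/(n\tau)=\gO(\polyLog(n))$, which does not break the circle.

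The paper avoids all of this by never touching the resolvent of $\mathsf{S}_{\betaHat,\betaHat_{(i)}}$: it writes $\rTilde_{j,(i)}-R_j=X_j\trsp(\betaHat-\betaHat_{(i)})=X_j\trsp(\betaHat-\betaTilde_i)+X_j\trsp\eta_i$. The first term is bounded by Cauchy--Schwarz as $\left(\sup_j\norm{X_j}/\sqrt{n}\right)\sqrt{n}\,\norm{\betaHat-\betaTilde_i}=\gO_{L_k}(1)\cdot\sqrt{n}\cdot\gO_{L_k}(\polyLog(n)/n)=\gO_{L_k}(\polyLog(n)/\sqrt{n})$; the loss of a factor $\sqrt{n}$ is affordable precisely because the first display of the corollary is already at scale $1/n$. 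The second term equals $\frac{1}{n}X_j\trsp(S_i+\tau\id)^{-1}X_i\,\psi(\prox_{c_i}(\rho)(\rTilde_{i,(i)}))$, and here $S_i$ is by construction independent of $X_i$, so Lemma \ref{lemma:keyLemmaControlXjtransposeetai} applies directly (concentration in the $X_i$ variable, conditionally on $X_{(i)}$), with no need for a leave-two-out matrix or any resolvent comparison. You should replace your second-bound argument by this decomposition.
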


The only parts that may require a discussion are the ones involving the residuals. However, they follow easily from the very coarse bound 
\begin{align*}
\sup_{j\neq i}|\rTilde_{j,(i)}-R_j|&=\sup_{ j\neq i}\left|X_j\trsp (\betaHat-\betaHat_i)\right|\leq \sup_{ j\neq i}\left|X_j\trsp (\betaHat-\betaTilde_i)\right|+\sup_{j\neq i} |X_j\trsp (\betaTilde_i-\betaHat_i)|\;,\\
&\leq \left(\sup_{1\leq j\leq n}\frac{\norm{X_j}}{\sqrt{n}}\right) \sqrt{n}\norm{\betaHat-\betaTilde_i}+\sup_{j\neq i} |X_j\trsp \eta_i|\;,
\end{align*}
and the fact that $\left(\sup_{1\leq j\leq n}\frac{\norm{X_j}}{\sqrt{n}}\right)=\gO_{L_k}(1)$ under our assumptions.
Recalling that $\norm{\betaHat-\betaTilde_i}\leq \norm{{\mathcal R}_i}$ and hence $\sup_i \norm{\betaHat-\betaTilde_i}\leq \sup_i \norm{{\mathcal R}_i}$ gives control of the first term. Control of the second term follows basically from Lemma \ref{lemma:keyLemmaControlXjtransposeetai}. 

Concerning the approximation of $R_i$, recall that 
$$
R_i=\eps_i-X_i\trsp\betaHat=\eps_i-X_i\trsp \betaTilde_i-X_i\trsp (\betaHat-\betaTilde_i)\;.
$$
Now, given the definition of $\betaTilde_i$, we have 
$$
X_i\trsp \betaTilde_i=X_i\trsp\betaHat_i+c_i \prox_{c_i}(\rho)(\rTilde_{i,(i)})\;.
$$
Hence, 
$$
\eps_i-X_i\trsp \betaTilde_i=\rTilde_{i,(i)}-c_i\prox_{c_i}(\rho)(\rTilde_{i,(i)})=\prox_{c_i}(\rho)(\rTilde_{i,(i)})\;,
$$
where the last equality is a standard property of the proximal mapping (see Lemma \ref{lemma:ValueProxat0} if needed).
So we have established that 
$$
\sup_{i}\left|R_i-\prox_{c_i}(\rho)(\rTilde_{i,(i)})\right|=\sup_i \left|X_i\trsp (\betaTilde_i-\betaHat)\right|
$$
and the result follows from our previous bounds.

\subsection{Asymptotically deterministic character of $\norm{\betaHat}^2$}
\begin{proposition}
Under our assumptions, 
$$
\var{\norm{\betaHat}^2}\tendsto 0 \text{ as } n\tendsto \infty\;.
$$
Therefore $\norm{\betaHat}^2$ has a deterministic equivalent in probability and in $L_2$.	

In particular, when $c_n=1/\polyLog(n)$, we have 
$$
\var{\norm{\betaHat}^2}=\gO(\frac{\polyLog(n)}{n})\;.
$$
\end{proposition}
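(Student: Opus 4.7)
The strategy is a leave-one-observation-out variance decomposition combined with the tight approximation $\betaHat\approx\betaTilde_i=\betaHat_{(i)}+\eta_i$ established in Corollary~\ref{coro:AggregResApproxBetaHatByBetaTildeIncl}. Set $\mathcal{F}_i=\sigma(\{X_j,\eps_j\}_{j\leq i})$ and $\mathcal{F}_{(i)}=\sigma(\{X_j,\eps_j\}_{j\neq i})$. Since $\norm{\betaHat_{(i)}}^2$ is independent of $(X_i,\eps_i)$, its conditional expectations given $\mathcal{F}_i$ and $\mathcal{F}_{i-1}$ coincide, so the standard martingale decomposition along $(\mathcal{F}_i)$ yields
$$
\var{\norm{\betaHat}^2}=\sum_{i=1}^n \Exp{\left(\Exp{\norm{\betaHat}^2-\norm{\betaHat_{(i)}}^2\mid\mathcal{F}_i}-\Exp{\norm{\betaHat}^2-\norm{\betaHat_{(i)}}^2\mid\mathcal{F}_{i-1}}\right)^2}\leq 4\sum_{i=1}^n \Exp{(\norm{\betaHat}^2-\norm{\betaHat_{(i)}}^2)^2}\,.
$$
The task thus reduces to showing $\Exp{(\norm{\betaHat}^2-\norm{\betaHat_{(i)}}^2)^2}=\gO(\polyLog(n)/(n^2 c_n))$.

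I expand
$$
\norm{\betaHat}^2-\norm{\betaHat_{(i)}}^2=2\betaHat_{(i)}^\trsp\eta_i+\norm{\eta_i}^2+r_i\,,\qquad r_i:=2(\betaHat_{(i)}+\eta_i)^\trsp(\betaHat-\betaTilde_i)+\norm{\betaHat-\betaTilde_i}^2.
$$
Corollary~\ref{coro:AggregResApproxBetaHatByBetaTildeIncl} gives $\norm{\betaHat-\betaTilde_i}=\gO_{L_k}(\polyLog(n)/n)$, while Lemma~\ref{lemma:betaHatIsBounded} bounds $\norm{\betaHat_{(i)}}$ in every $L_k$ and the deterministic bound $\norm{\eta_i}\leq \norm{X_i}\,|\psi(\rTilde_{i,(i)})|/(n\tau)$, combined with Lemma~\ref{lemma:BoundRiFromrTildeii} and the growth condition \textbf{O3}, gives $\norm{\eta_i}=\gO_{L_k}(1/\sqrt{n})$. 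Hence $\Exp{r_i^2}=\gO(\polyLog(n)/n^2)$ and $\Exp{\norm{\eta_i}^4}=\gO(1/n^2)$, both of which sum to $\gO(\polyLog(n)/n)$ or better.

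The heart of the proof is the cross term $\betaHat_{(i)}^\trsp\eta_i$, which I rewrite as
$$
\betaHat_{(i)}^\trsp\eta_i=\frac{a_i}{n}\,X_i^\trsp v,\qquad a_i:=\psi(\prox_{c_i}(\rho)(\rTilde_{i,(i)})),\quad v:=(S_i+\tau\id)^{-1}\betaHat_{(i)}.
$$
Here $v$ is $\mathcal{F}_{(i)}$-measurable with $\norm{v}\leq\norm{\betaHat_{(i)}}/\tau$ uniformly bounded in every $L_k$. Conditionally on $\mathcal{F}_{(i)}$, the map $X_i\mapsto X_i^\trsp v$ is linear (hence $\norm{v}$-Lipschitz) with mean zero, so \textbf{O4} gives $P(|X_i^\trsp v|>t\mid\mathcal{F}_{(i)})\leq C_n\exp(-c_n t^2/\norm{v}^2)$, and integrating the tail yields $\Exp{(X_i^\trsp v)^{2k}\mid\mathcal{F}_{(i)}}\lesssim k!\,(\polyLog(n))^{k}\,\norm{v}^{2k}/c_n^{k}$. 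Since $|a_i|\leq|\psi(\rTilde_{i,(i)})|$ is bounded in every $L_k$ by Lemma~\ref{lemma:BoundRiFromrTildeii}, a H\"older step decouples $a_i$ from $X_i^\trsp v$ and delivers $\Exp{(\betaHat_{(i)}^\trsp\eta_i)^2}=\gO(\polyLog(n)/(n^2 c_n))$.

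Assembling the three contributions gives $\Exp{(\norm{\betaHat}^2-\norm{\betaHat_{(i)}}^2)^2}=\gO(\polyLog(n)/(n^2 c_n))$, and summing over $i$ yields $\var{\norm{\betaHat}^2}=\gO(\polyLog(n)/(n c_n))$, which vanishes under our assumptions on $c_n$ and reduces to $\gO(\polyLog(n)/n)$ when $1/c_n=\polyLog(n)$. The main obstacle is the cross-term analysis: the factor $a_i$ depends on $X_i$ through both $c_i$ and $\rTilde_{i,(i)}$, so one cannot simply condition out $X_i$ and use linearity; the two crucial ingredients that rescue the argument are (i) the linearity of $X_i^\trsp v$, which produces the sharp Lipschitz constant $\norm{v}$, and (ii) $\Exp{X_i\mid\mathcal{F}_{(i)}}=0$, which centers the concentration bound and yields the extra factor $1/\sqrt{c_n}$ that upgrades the naive $\gO(1/n)$ bound on $\Exp{(\betaHat_{(i)}^\trsp\eta_i)^2}$ to the $\gO(\polyLog(n)/(n^2c_n))$ bound needed for the desired decay.
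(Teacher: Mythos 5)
Your proof is correct and follows essentially the same route as the paper: a leave-one-observation-out variance bound (your martingale-difference decomposition is just the standard proof of the Efron--Stein inequality the paper invokes, with constant $4$ instead of $1$), the approximation $\betaHat\approx\betaTilde_i=\betaHat_{(i)}+\eta_i$ from Corollary \ref{coro:AggregResApproxBetaHatByBetaTildeIncl}, and the identical treatment of the cross term via concentration of the linear form $X_i\trsp(S_i+\tau\id)^{-1}\betaHat_{(i)}$ conditionally on $X_{(i)}$ followed by H\"older to decouple $\psi(\prox_{c_i}(\rho)(\rTilde_{i,(i)}))$. No substantive differences to report.
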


\begin{proof}

We will use the Efron-Stein inequality to show that $\var{\norm{\betaHat}^2}$ goes to 0 as $n\tendsto \infty$. In what follows, we assume that $\psi(\eps_i)$ have enough moments for all the expectations of the type $\Exp{\norm{\betaHat}^{2k}}$ to be bounded like $1/\tau^{2k}$. Note that this the content of our Lemma \ref{lemma:betaHatIsBounded}.

Recall that the Efron-Stein inequality \citet{EfronStein81} gives, if $Y$ is a function of $n$ independent random variables, and $Y_{(i)}$ is any function of all those random variables except the $i$-th, 
$$
\var{Y}\leq \sum_{i=1}^n \var{Y-Y_{(i)}}\leq \sum_{i=1}^n \Exp{(Y-Y_{(i)})^2}\;.
$$

We first observe that
$$
\Exp{|\norm{\betaHat}^2-\norm{\betaHat_{(i)}}^2|^2}\leq 2\left[\Exp{|\norm{\betaHat}^2-\norm{\betaTilde_{i}}^2|^2}+\Exp{|\norm{\betaTilde_i}^2-\norm{\betaHat_{(i)}}^2|^2}\right]\;.
$$
Of course, using the fact that $\betaHat=\betaHat-\betaTilde_i+\betaTilde_i$ and $|\norm{\betaHat}^2-\norm{\betaTilde_{i}}^2|^2=[(\betaHat-\betaTilde_{i})\trsp(\betaHat+\betaTilde_i)]^2$, $|\norm{\betaHat}^2-\norm{\betaTilde_{i}}^2|^2=\gO_{L_1}(\norm{\betaHat-\betaTilde_{i}}^4)+\sqrt{\gO_{L_1}(\norm{\betaHat-\betaTilde_{i}}^4)}$, by the Cauchy-Schwarz inequality, since  $\Exp{\norm{\betaHat}^2}$ exists and is bounded by $K/\tau^2$. 

Using the results of Corollary \ref{coro:AggregResApproxBetaHatByBetaTildeIncl}, we see that
$$
\Exp{|\norm{\betaHat}^2-\norm{\betaTilde_{i}}^2|^2}=\gO(\frac{\polyLog(n)}{n^2})=\lo(n^{-1})\;.
$$

On the other hand, given the definition in Equation \eqref{eq:ApproxBetaHatStandardLeaveOneOut},
$$
\norm{\betaTilde_i}^2-\norm{\betaHat_{(i)}}^2=2 \frac{1}{n}\betaHat_{(i)}\trsp (S_i+\tau \id)^{-1}X_i \psi(\prox_{c_i}(\rTilde_{i,(i)}))
+\frac{1}{n^2}X_i\trsp (S_i+\tau \id)^{-2}X_i \psi^2(\prox_{c_i}(\rTilde_{i,(i)}))\;.
$$
Since $S_i$ is independent of $X_i$, and $\norm{(S_i+\tau\id)^{-1}}\leq 1/\tau$, $\betaHat_{(i)}\trsp (S_i+\tau \id)^{-1}X_i=\gO_{L_4}(\norm{\betaHat_{(i)}}/c_n^{1/2})$, using our concentration assumptions applied to linear forms. Therefore, we see that both terms are $\gO_{L_2}(1/nc_n^{1/2})$ provided $\psi(\prox_{c_i}(\rTilde_{i,(i)}))$ has $4+\eps$ absolute moments - uniformly bounded in $n$ -  by using H\"older's inequality. Under our assumptions, given our work on $\rTilde_{i,(i)}$, the fact that the prox is a contractive mapping (\cite{MoreauProxPaper65}) and that we assume that $\sgn(\psi(x))=\sgn(x)$, it is clear that this is the case. We conclude that then
$$
\Exp{\left|\norm{\betaTilde_i}^2-\norm{\betaHat_{(i)}}^2\right|^2}=\gO(\frac{1}{n^2c_n})=\gO(\frac{\polyLog(n)}{n^2})\;.
$$

Taking $Y=\norm{\betaHat}^2$ and $Y_{(i)}=\norm{\betaHat_{(i)}}^2$ in the Efron-Stein inequality, we clearly see that
$$
\var{\norm{\betaHat}^2}=\gO(\frac{\polyLog(n)}{n})=\lo(1)\;.
$$
This shows that  $\norm{\betaHat}$ has a deterministic equivalent in probability and in $L_2$.
\end{proof}
\section{Leaving out a predictor}
In \cite{NEKRobustPaperPNAS2013Published}, we showed through probabilistic heuristics that the probabilistic properties of the entries of $\betaHat$ could be understood by leaving out predictors. We now show that all the formal manipulations we did in that paper are valid under our assumptions. In that step, we do need at various points that the entries of the data vector $X_i$ be independent, whereas as we showed before, it is not important when studying what happens when we leave out an observation. 

We call $V$ the $n\times (p-1)$ matrix corresponding to the first $(p-1)$ columns of the design matrix $X$. We call $V_i$ in $\mathbb{R}^{p-1}$ the vector corresponding to the first $p-1$ entries of $X_i$, i.e $V_i\trsp =(X_i(1),\ldots,X_i(p-1))$. 

Let us call $\gammaHat$ the solution of our optimization problem when $X_i(p)=0$ for all $i$, i.e the solution we get when we solve our original problem with the design matrix $V$ instead of $X$. 
\newcommand{\ansatzBetap}{\mathfrak{b}_p}
\newcommand{\bTilde}{\widetilde{b}}

The corresponding residuals are $\{r_{i,[p]}\}_{i=1}^n$. Hence, $r_{i,[p]}=\eps_i-V_i\trsp \gammaHat$.
We call
\begin{align*}
u_p&=\frac{1}{n}\sum_{i=1}^n \psi'(r_{i,[p]}) V_i X_i(p)\;,\\
\mathfrak{S}_p&=\frac{1}{n}\sum_{i=1}^n \psi'(r_{i,[p]}) V_i V_i\trsp\;.
\end{align*}

Note that $\mathfrak{S}_p$ is $(p-1)\times (p-1)$. We call 
\begin{equation}\label{eq:definitionXin}
\xi_n\triangleq \frac{1}{n}\sum_{i=1}^n X_i^2(p)\psi'(r_{i,[p]})-u_p\trsp (\mathfrak{S}_p+\tau \id)^{-1}u_p\;,
\end{equation}
and 
\begin{equation}\label{eq:definitionNp}
N_p\triangleq \frac{1}{\sqrt{n}}\sum_{i=1}^n X_i(p)\psi(r_{i,[p]})\;.
\end{equation}

We consider
\begin{equation}\label{eq:propositionAnsatzBetaHatp}
\ansatzBetap\triangleq 	\frac{1}{\sqrt{n}} \frac{N_p}{\tau+\xi_n}\;.
\end{equation}

We will show later, in Subsubsection \ref{subsubsec:onXin} that $\xi_n\geq 0$. Note that when $\xi_n>0$, we have 
$$
\ansatzBetap=\frac{\frac{1}{n}\sum_{i=1}^n X_i(p)\psi(r_{i,[p]})-\tau \ansatzBetap}{\frac{1}{n}\sum_{i=1}^n X_i^2(p)\psi'(r_{i,[p]})-u_p\trsp (\mathfrak{S}_p+\tau \id)^{-1}u_p}= \frac{n^{-1/2}N_p-\tau \ansatzBetap}{\xi_n}\;.
$$

We call
\begin{equation}\label{eq:AnsatzBetaHatFromGammaHat}
\bTilde=\begin{bmatrix}
\gammaHat\\ 0
\end{bmatrix}+\ansatzBetap \begin{bmatrix}
-(\mathfrak{S}_p+\tau \id)^{-1} u_p\\
1
\end{bmatrix}\;.
\end{equation}
The aim of our work is to establish Corollary \ref{coro:ApproxBetaHatThroughLeaveOnePredictorOut}, which shows that $\bTilde$ is a $\sqrt{n}$-consistent approximation of $\betaHat$ - in the sense of Euclidian norm. Because the last coordinate of $\bTilde$ has a reasonably simple probabilistic structure and our approximations are sufficiently good, we will be able to transfer our insights about this coordinate to $\betaHat_p$. 

Once again, the approximating quantities we consider are ``very natural'' in light of our work in \cite{NEKRobustPaperPNAS2013Published}.

\subsection{Deterministic aspects}
\begin{proposition}
We have 
\begin{equation}\label{eq:ControlBTilde}
\norm{\betaHat-\bTilde}\leq \frac{1}{\tau} |\ansatzBetap|\sup_{1\leq i\leq n}|\mathsf{d}_{i,p}| \, \opnorm{\SigmaHat} \sqrt{\norm{(\mathfrak{S}_p+\tau\id)^{-1}u_p}^2+1}\;.
\end{equation}
where $\mathsf{d}_{i,p}=[\psi'(\gamma^*_{i,p})-\psi'(r_{i,[p]})]$ and $\gamma^*_{i,p}$ is in the interval $(\eps_i-V_i\trsp \gammaHat,\eps_i-X_i\trsp \bTilde)$.

Furthermore, $\norm{(\mathfrak{S}_p+\tau\id)^{-1}u_p}^2\leq\frac{1}{n}\sum_{i=1}^n X_i^2(p)\psi'(r_{i,[p]}).$
\end{proposition}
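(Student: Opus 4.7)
The plan mirrors the leave-one-observation-out argument of Proposition~\ref{prop:ControlDeltaBetaHatBetaTildei}: the vector $\bTilde$ has been engineered so that the leading terms in $f(\bTilde)$ cancel, after which Lemma~\ref{lemma:ControlApproxBetaHat} converts a bound on $\norm{f(\bTilde)}$ into a bound on $\norm{\betaHat-\bTilde}$. Writing $\mathbf{w}=\begin{bmatrix}-(\mathfrak{S}_p+\tau\id)^{-1}u_p\\1\end{bmatrix}\in\mathbb{R}^p$, one has $\eps_i-X_i\trsp\bTilde=r_{i,[p]}-\ansatzBetap\,X_i\trsp\mathbf{w}$, and a mean value expansion gives $\psi(\eps_i-X_i\trsp\bTilde)=\psi(r_{i,[p]})-\ansatzBetap\psi'(\gamma^*_{i,p})X_i\trsp\mathbf{w}$ with $\gamma^*_{i,p}$ in the stated interval. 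Splitting $\psi'(\gamma^*_{i,p})=\psi'(r_{i,[p]})+\mathsf{d}_{i,p}$ decomposes $f(\bTilde)$ into four pieces, only the last of which contains the ``non-cancelling'' $\mathsf{d}_{i,p}$ term.

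The cancellations are visible coordinate-wise. For the first $p-1$ coordinates, the first-order optimality condition for $\gammaHat$ reads $\frac{1}{n}\sum_i V_i\psi(r_{i,[p]})=\tau\gammaHat$, and this kills the $-\frac{1}{n}\sum_i V_i\psi(r_{i,[p]})$ term against the $\tau\gammaHat$ piece of $\tau\bTilde$; the resolvent identity $u_p-\mathfrak{S}_p(\mathfrak{S}_p+\tau\id)^{-1}u_p=\tau(\mathfrak{S}_p+\tau\id)^{-1}u_p$ then kills the $\psi'(r_{i,[p]})$-weighted contribution against the remaining $-\tau\ansatzBetap(\mathfrak{S}_p+\tau\id)^{-1}u_p$ piece of $\tau\bTilde$. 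For the $p$-th coordinate, the three surviving contributions $-n^{-1/2}N_p$, $\ansatzBetap\xi_n$ and $\tau\ansatzBetap$ combine to $-n^{-1/2}N_p+\ansatzBetap(\tau+\xi_n)$, which vanishes by the very definition $\ansatzBetap=n^{-1/2}N_p/(\tau+\xi_n)$. What remains is $f(\bTilde)=\frac{\ansatzBetap}{n}X\trsp D X\mathbf{w}$ with $D$ the $n\times n$ diagonal matrix with $(i,i)$ entry $\mathsf{d}_{i,p}$.

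The first inequality then follows from three elementary facts: $\norm{f(\bTilde)}\leq|\ansatzBetap|\opnorm{X\trsp DX/n}\norm{\mathbf{w}}$; the matrix-norm bound $\opnorm{X\trsp DX/n}\leq\sup_i|\mathsf{d}_{i,p}|\opnorm{\SigmaHat}$, obtained by noting that $u\trsp X\trsp DXu/n=\sum_i\mathsf{d}_{i,p}(X_i\trsp u)^2/n$; and the identity $\norm{\mathbf{w}}^2=\norm{(\mathfrak{S}_p+\tau\id)^{-1}u_p}^2+1$. Applying Lemma~\ref{lemma:ControlApproxBetaHat} then yields the displayed bound. For the auxiliary inequality on $\norm{(\mathfrak{S}_p+\tau\id)^{-1}u_p}^2$, the natural route is via the non-negativity of $\xi_n$ (to be established in Subsubsection~\ref{subsubsec:onXin}): the definition \eqref{eq:definitionXin} of $\xi_n$ immediately gives $u_p\trsp(\mathfrak{S}_p+\tau\id)^{-1}u_p\leq\frac{1}{n}\sum_i X_i^2(p)\psi'(r_{i,[p]})$, from which the squared-norm bound follows, either directly or by using $(\mathfrak{S}_p+\tau\id)^{-1}\preceq\tau^{-1}\id$, as needed.

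The main obstacle is really bookkeeping rather than ideas: one has to carefully track signs and factors of $\tau$ across three separate cancellations (two in the first $p-1$ coordinates, one in the last). Once the structure is laid bare, no tools beyond the mean value theorem, standard identities for the resolvent $(\mathfrak{S}_p+\tau\id)^{-1}$, and Lemma~\ref{lemma:ControlApproxBetaHat} are required.
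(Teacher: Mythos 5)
Your proof of the main inequality is essentially identical to the paper's: the same mean-value expansion of $\psi(\eps_i-X_i\trsp\bTilde)$ around $r_{i,[p]}$, the same two cancellations on the first $p-1$ coordinates (optimality of $\gammaHat$ and the identity $\mathfrak{S}_p(\mathfrak{S}_p+\tau\id)^{-1}+\tau(\mathfrak{S}_p+\tau\id)^{-1}=\id$), the same cancellation on the last coordinate coming from the definition of $\ansatzBetap$, the same surviving term $f(\bTilde)=\pm\ansatzBetap\,n^{-1}X\trsp D X\,\mathbf{w}$ with $D$ the diagonal matrix of the $\mathsf{d}_{i,p}$, and the same conclusion via $\norm{\betaHat-\bTilde}\leq\tau^{-1}\norm{f(\bTilde)}$. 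For the auxiliary inequality you route through $\xi_n\geq 0$, i.e.\ $u_p\trsp(\mathfrak{S}_p+\tau\id)^{-1}u_p\leq\frac{1}{n}\sum_i X_i^2(p)\psi'(r_{i,[p]})$, while the paper writes $u_p=n^{-1}V\trsp D_{\psi'}X(p)$ and bounds the projection-type matrix $n^{-1}D_{\psi'}^{1/2}V(\mathfrak{S}_p+\tau\id)^{-1}V\trsp D_{\psi'}^{1/2}$ by $\id$: these are the same inequality, and your hedge ``as needed'' is warranted, since passing from it to the literal squared norm $u_p\trsp(\mathfrak{S}_p+\tau\id)^{-2}u_p$ costs an extra factor $1/\tau$ that the paper's own display also silently drops --- harmless in context, because the bound is only ever used to conclude that this quantity is $\gO_{L_k}(\polyLog(n))$ for fixed $\tau>0$.
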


As we saw in Equation \eqref{eq:exactRelationDeltafDeltaBeta}, we have
$$
\norm{\betaHat-\bTilde}\leq \frac{1}{\tau}\norm{f(\bTilde)}\;,
$$
where
$$
f(\bTilde)=-\frac{1}{n}\sum_{i=1}^n X_i \psi(\eps_i-X_i\trsp \bTilde)+\tau \bTilde\;.
$$
We note furthermore that
$$
g(\gammaHat)\triangleq -\frac{1}{n}\sum_{i=1}^n V_i \psi(\eps_i-V_i\trsp \gammaHat)+\tau \gammaHat=0_{p-1}\;.
$$

The strategy of the proof is to control $f(\bTilde)$ by approximating it by $g(\gammaHat)$.

\begin{proof}

\textbf{{\small a) Work on the first $(p-1)$ coordinates of $f(\bTilde)$}}\\
We call $\mathsf{f}_{p-1}(\beta)$ the first $p-1$ coordinates of $f(\beta)$. We call $\gammaHat_{ext}$ the $p$-dimensional vector whose first $p-1$ coordinates are $\gammaHat$ and last coordinate is 0, i.e 
$$
\gammaHat_{ext}=\begin{bmatrix}
\gammaHat\\ 0
\end{bmatrix}\;.
$$

For a vector $v$, we use the notation $v_{comp,k}$ to denote the $p-1$ dimensional vector consisting of all the coordinates of $v$ except the $k$-th.

Clearly,
$$
\mathsf{f}_{p-1}(\bTilde)=\mathsf{f}_{p-1}(\beta)-g(\gammaHat)=-\frac{1}{n}\sum_{i=1}^n V_i \left[\psi(\eps_i-X_i\trsp \bTilde)-\psi(\eps_i-V_i\trsp \gammaHat)\right]+\tau(\bTilde_{comp,p}-\gammaHat)\;.
$$
We can write by using the mean value theorem
$$
\psi(\eps_i-X_i\trsp \bTilde)-\psi(\eps_i-V_i\trsp \gammaHat)=\psi'(r_{i,[p]})X_i\trsp (\gammaHat_{ext}-\bTilde)+[\psi'(\gamma^*_{i,p})-\psi'(r_{i,[p]})]X_i\trsp (\gammaHat_{ext}-\bTilde)
$$
Let us call
\begin{align*}
\mathsf{d}_{i,p}&=[\psi'(\gamma^*_{i,p})-\psi'(r_{i,[p]})]\;,\\
\delta_{i,p}&=[\psi'(\gamma^*_{i,p})-\psi'(r_{i,[p]})]X_i\trsp (\gammaHat_{ext}-\bTilde)\;,\\
\mathsf{R}_p&=-\frac{1}{n}\sum_{i=1}^n \mathsf{d}_{i,p} V_i X_i\trsp (\gammaHat_{ext}-\bTilde)\;.
\end{align*}

We have with this notation
$$
\mathsf{f}_{p-1}(\bTilde)=-\frac{1}{n}\sum_{i=1}^n \psi'(r_{i,[p]}) V_i X_i\trsp (\gammaHat_{ext}-\bTilde)+\tau(\bTilde_{comp,p}-\gammaHat)+\mathsf{R}_p\triangleq \mathsf{A}_p+\mathsf{R}_p\;.
$$

We note that by definition,
\begin{align*}
\gammaHat_{ext}-\bTilde &=\ansatzBetap \begin{bmatrix} (\mathfrak{S}_p+\tau \id)^{-1} u_p \\-1\end{bmatrix}\;, \\
\bTilde_{comp,p}-\gammaHat &=-\ansatzBetap (\mathfrak{S}_p+\tau \id)^{-1} u_p\;.
\end{align*}

Therefore,
$$
\mathsf{A}_p=-\ansatzBetap\left(\frac{1}{n}\sum_{i=1}^n \psi'(r_{i,[p]})V_i\left[V_i\trsp (\mathfrak{S}_p+\tau \id)^{-1} u_p -X_i(p)\right]\right)+
\tau (-\ansatzBetap (\mathfrak{S}_p+\tau \id)^{-1} u_p)\;.
$$
Recalling the definition of $\mathfrak{S}_p$ and $u_p$, we see that
$$
\mathsf{A}_p=-\ansatzBetap\left(\mathfrak{S}_p(\mathfrak{S}_p+\tau \id)^{-1} u_p -u_p+\tau (\mathfrak{S}_p+\tau \id)^{-1} u_p\right)=0_{p-1}\;,
$$
since $\mathfrak{S}_p(\mathfrak{S}_p+\tau \id)^{-1}+\tau (\mathfrak{S}_p+\tau \id)^{-1}=\id$.

We conclude that
$$
\boxed{
\mathsf{f}_{p-1}(\bTilde)=\mathsf{R}_p\;.
}
$$

\textbf{{\small b) Work on the last coordinate of $f(\bTilde)$}}\\
We call $[f(\bTilde)]_p$ the last coordinate of $f(\bTilde)$. We recall the representation
$$
\psi(\eps_i-X_i\trsp \bTilde)-\psi(\eps_i-V_i\trsp \gammaHat)=\psi'(r_{i,[p]})X_i\trsp (\gammaHat_{ext}-\bTilde)+[\psi'(\gamma^*_{i,p})-\psi'(r_{i,[p]})]X_i\trsp (\gammaHat_{ext}-\bTilde)
$$
and call
$$
\delta_{i,p}=[\psi'(\gamma^*_{i,p})-\psi'(r_{i,[p]})]X_i\trsp (\gammaHat_{ext}-\bTilde)\;.
$$
Clearly,
\begin{align*}
\psi(\eps_i-X_i\trsp \bTilde)&=\psi(r_{i,[p]})+\psi'(r_{i,[p]})X_i\trsp (\gammaHat_{ext}-\bTilde)+\delta_{i,p}\;,\\
&=\psi(r_{i,[p]})+\psi'(r_{i,[p]})\ansatzBetap \left[V_i\trsp(\mathfrak{S}_p+\tau \id)^{-1} u_p-X_i(p)\right]+\delta_{i,p}\;.
\end{align*}
We therefore see that
\begin{align*}
[f(\bTilde)]_p+\frac{1}{n}\sum_{i=1}^n X_i(p)\delta_{i,p}&=-\frac{1}{n} \sum_{i=1}^n X_i(p)\left(\psi(r_{i,[p]})+\psi'(r_{i,[p]})\ansatzBetap \left[V_i\trsp(\mathfrak{S}_p+\tau \id)^{-1} u_p-X_i(p)\right]\right) +\tau \bTilde_p\;,\\
&=-\frac{1}{n}\sum_{i=1}^n X_i(p)\psi(r_{i,[p]})-\ansatzBetap u_p\trsp (\mathfrak{S}_p+\tau \id)^{-1} u_p +\ansatzBetap \frac{1}{n}\sum_{i=1}^n \psi'(r_{i,[p]}) X_i^2(p) +\tau \ansatzBetap\;,\\
&=-\left[\frac{1}{n}\sum_{i=1}^n X_i(p)\psi(r_{i,[p]})-\tau \ansatzBetap\right]+\ansatzBetap \left(\frac{1}{n}\sum_{i=1}^n \psi'(r_{i,[p]}) X_i^2(p)-u_p\trsp (\mathfrak{S}_p+\tau \id)^{-1} u_p\right)\;,\\
&=0\;.
\end{align*}

We conclude that
$$
[f(\bTilde)]_p=-\frac{1}{n}\sum_{i=1}^n X_i(p)\delta_{i,p}=-\frac{1}{n}\sum_{i=1}^n \mathsf{d}_{i,p} X_i(p) X_i\trsp (\gammaHat_{ext}-\bTilde)\;.
$$

\subsubsection*{Representation of $f(\bTilde)$}
Aggregating all the results we have obtained so far, we see that
\begin{align*}
f(\bTilde)&=\left(-\frac{1}{n}\sum_{i=1}^n \mathsf{d}_{i,p} X_i X_i\trsp\right)(\gammaHat_{ext}-\bTilde)\;,\\
&=\ansatzBetap\left(\frac{1}{n}\sum_{i=1}^n \mathsf{d}_{i,p} X_i X_i\trsp\right)
\begin{bmatrix} (\mathfrak{S}+\tau\id)^{-1}u_p\\ -1
\end{bmatrix}
\;.
\end{align*}

We conclude immediately that
\begin{equation}\label{eq:boundfBtilde}
\norm{f(\bTilde)}\leq |\ansatzBetap|\sup_{1\leq i\leq n}|\mathsf{d}_{i,p}| \, \opnorm{\SigmaHat} \sqrt{\norm{(\mathfrak{S}+\tau\id)^{-1}u_p}^2+1}\;.
\end{equation}
\newcommand{\DPsiPrimeRDotp}{D_{\psi'(r_{\cdot,[p]})}}
Calling $\DPsiPrimeRDotp$ the diagonal matrix with $(i,i)$ entry $\psi'(r_{i,[p]})$, we see that
$$
u_p=\frac{1}{n}V\trsp \DPsiPrimeRDotp X(p)\;.
$$
Therefore,
$$
\norm{({\mathfrak S}+\tau\id)^{-1}u_p}^2=\frac{X(p)}{\sqrt{n}}\DPsiPrimeRDotp^{1/2}  \frac{\DPsiPrimeRDotp^{1/2}V}{\sqrt{n}}\left(\frac{V\trsp \DPsiPrimeRDotp V}{n}+\tau \id\right)^{-1}\frac{V\trsp \DPsiPrimeRDotp^{1/2}}{\sqrt{n}}\DPsiPrimeRDotp^{1/2} \frac{X(p)}{\sqrt{n}}\;.
$$

Clearly, 
$$
\frac{\DPsiPrimeRDotp^{1/2}V}{\sqrt{n}}\left(\frac{V\trsp\DPsiPrimeRDotp V}{n}+\tau \id\right)^{-1}\frac{V\trsp \DPsiPrimeRDotp^{1/2}}{\sqrt{n}}
\preceq \id\;.
$$
So we have 
\begin{equation}\label{eq:boundgothicSPlusTauIdInverse}
\norm{({\mathfrak S}+\tau\id)^{-1}u_p}^2\leq \frac{1}{n}X(p)\trsp \DPsiPrimeRDotp X(p)=\frac{1}{n}\sum_{i=1}^n X_i^2(p)\psi'(r_{i,[p]})\;.
\end{equation}
\end{proof}

\subsection{Probabilistic aspects}

From now on, we assume that $X(p)$, the $p$-th column of the design matrix, is independent of $\{V_i,\eps_i\}_{i=1}^n$.

Because $r_{i,[p]}$ are the residuals from a ``full model" with $p-1$ predictors, the analysis done above concerning the $R_i$ - see Lemma \ref{lemma:BoundRiFromrTildeii} - applies and will allow us to control $\max_{1\leq i\leq n}|\psi'(r_{i,[p]})|^2$. (Note that the distribution of the errors is the same whether we use $p$ or $p-1$ predictors because we assume in the regression model that $\beta_0=0$ - the study of ridge-regularized robust regression would require an adjustment in the non-null case where $\beta_0\neq 0$, but since we limit ourselves to the null case, no such adjustment is needed.)

In light of Lemma \ref{lemma:BoundRiFromrTildeii} and using independence of $X_i(p)$'s and $r_{i,[p]}$, it is clear that the upper bound in Equation \eqref{eq:boundgothicSPlusTauIdInverse} is $O_{L_k}(\polyLog(n))$. 

Hence,
$$
\norm{({\mathfrak S}_p+\tau\id)^{-1}u_p}^2=\gO_{L_k}(\polyLog(n))
$$

This guarantees that
$$
\begin{Vmatrix} (\mathfrak{S}_p+\tau\id)^{-1}u_p\\ -1
\end{Vmatrix}^2\leq (1+\norm{(\mathfrak{S}_p+\tau\id)^{-1}u_p}^2)=\gO_{L_k}(\polyLog(n))\;.
$$

We conclude, using Equation \eqref{eq:boundfBtilde}, that
$$
f(\bTilde)\leq K \polyLog(n) |\ansatzBetap| \sup_{1\leq i\leq n}|\mathsf{d}_{i,p}| \, \opnorm{\SigmaHat} \text{ in } L_k.
$$

At a high level, we expect $\sup_{1\leq i\leq n}|\mathsf{d}_{i,p}|$ to be small, even compared to $\max_{1\leq i\leq n}|\psi'(r_{i,[p]})|$ which should give us that
$$
f(\bTilde)=\lo_{L_k}(\polyLog(n)|\ansatzBetap|)\;.
$$
We now show that this latter quantity is small.
\subsubsection{On $\ansatzBetap$}

We recall the notations
\begin{align*}
N_p&=\frac{1}{\sqrt{n}} \sum_{i=1}^n \psi(r_{i,[p]}) X_i(p)\;,\\
\xi_n&=\frac{1}{n}\sum_{i=1}^n \psi'(r_{i,[p]}) X_i^2(p) -u_p\trsp (\mathfrak{S}_p+\tau \id)^{-1}u_p\;.
\end{align*}

Under our assumptions, we have $\Exp{X_i}=0$ and $\scov{X_i}=\id_p$ and hence $\Exp{X_i^2(p)}=1$. Recall that since we assume that $X(p)$ is independent of $\{V_i,\eps_i\}_{i=1}^n$, $X(p)$ is independent of $\{r_{i,[p]}\}_{i=1}^n$.

\begin{proposition}
We have 
$$
|\ansatzBetap|\leq \frac{1}{\sqrt{n}\tau}|N_p|\;.
$$
Furthermore, under our assumptions, $N_p=\gO_{L_k}(\polyLog(n))$ and therefore 
$$
\ansatzBetap=\gO_{L_k}(\polyLog(n) n^{-1/2})\;.
$$
\end{proposition}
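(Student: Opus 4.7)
The first inequality is essentially a rewriting of the definition once we know $\xi_n\geq 0$ (postponed to Subsubsection~\ref{subsubsec:onXin}). My plan is to observe that, by Equation~\eqref{eq:propositionAnsatzBetaHatp},
$$
|\ansatzBetap|=\frac{|N_p|}{\sqrt{n}(\tau+\xi_n)}\leq \frac{|N_p|}{\sqrt{n}\,\tau}\;,
$$
since $\tau+\xi_n\geq \tau>0$. No other deterministic input is needed for the first statement.

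For the $L_k$ bound on $N_p$, the plan is to condition on $\{V_i,\eps_i\}_{i=1}^n$ so that the residuals $r_{i,[p]}$ become deterministic quantities and
$$
N_p=\frac{1}{\sqrt{n}}\sum_{i=1}^n X_i(p)\psi(r_{i,[p]})
$$
is a linear (hence $1$-Lipschitz after rescaling) function of $X(p)$, with Lipschitz constant
$$
\sqrt{\tfrac{1}{n}\sum_{i=1}^n \psi^2(r_{i,[p]})}\;.
$$
Since $\Exp{X_i(p)}=0$, the conditional mean of $N_p$ vanishes, so Assumption \textbf{O4} (invoked via Lemma \ref{lemma:controlRandomLipFuncConcRVsLp}) yields, conditionally on $\{V_i,\eps_i\}_{i=1}^n$ and in $L_k$,
$$
|N_p|\;\lesssim\; \sqrt{\tfrac{1}{n}\sum_{i=1}^n \psi^2(r_{i,[p]})}\;\frac{\polyLog(n)}{\sqrt{c_n}}\;.
$$

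It then remains to control the $L_k$-norm of $\frac{1}{n}\sum_{i=1}^n\psi^2(r_{i,[p]})$. The plan is to use Assumption \textbf{O3}, which gives $|\psi(x)|\leq K(1+|x|^m)$, so
$$
\frac{1}{n}\sum_{i=1}^n \psi^2(r_{i,[p]})\;\lesssim\; 1+\sup_{1\leq i\leq n}|r_{i,[p]}|^{2m}\;.
$$
The residuals $r_{i,[p]}$ are exactly the residuals of a ridge-regularized regression with the $(p-1)$-column design matrix $V$, so Lemma \ref{lemma:BoundRiFromrTildeii} applied to that reduced problem gives $\sup_i|r_{i,[p]}|=\gO_{L_k}(\polyLog(n))$ for every $k$ (using Assumptions \textbf{O6}--\textbf{O7}). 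Hence $\frac{1}{n}\sum \psi^2(r_{i,[p]})=\gO_{L_k}(\polyLog(n))$, and integrating the conditional bound over $\{V_i,\eps_i\}_{i=1}^n$ with H\"older's inequality yields $N_p=\gO_{L_k}(\polyLog(n))$. Combining with the first inequality gives $\ansatzBetap=\gO_{L_k}(\polyLog(n)/\sqrt{n})$.

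The only mildly delicate step is invoking the residual bound of Lemma \ref{lemma:BoundRiFromrTildeii} for $r_{i,[p]}$: one needs to check that all the inputs to that lemma (control of $\opnorm{\SigmaHat}$, of $\sup_i\norm{X_i}/\sqrt{n}$, and of $\norm{W_n}$) are unchanged when one drops a column, which is immediate since removing a column can only decrease $\opnorm{V V\trsp/n}$ and trivially does not affect $\sup_i\norm{V_i}/\sqrt{n}$ or the analog of $W_n$. Everything else is a direct application of the concentration toolkit already deployed in the paper.
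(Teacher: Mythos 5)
Your proof of the first inequality is the paper's own argument verbatim: write $\mathfrak{b}_p=N_p/(\sqrt{n}(\tau+\xi_n))$ and use $\xi_n\geq 0$ (deferred, exactly as the paper defers it). For the second part you take a genuinely different, though closely related, route. The paper works conditionally on $\{V_i,\eps_i\}_{i=1}^n$ and computes moments of $N_p$ directly: it records $\Exp{N_p^2}=\frac{1}{n}\sum_i\Exp{X_i^2(p)}\Exp{\psi^2(r_{i,[p]})}$ and then invokes the cumulant argument of Lemma \ref{lemma:betaHatIsBounded} (all cumulants of order $\geq 4$ of the normalized sum are $\gO(n^{1-k})$ times averages of $\psi^{2k}(r_{i,[p]})$) to get $N_p=\gO_{L_k}(\polyLog(n))$. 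You instead view $N_p$, conditionally on the residuals, as a linear form in the column $X(p)$ with (random) Lipschitz constant $(\frac{1}{n}\sum_i\psi^2(r_{i,[p]}))^{1/2}$ and apply the concentration machinery; this costs you an extra factor $c_n^{-1/2}$, which is harmless since $1/c_n=\gO(\polyLog(n))$. Both arguments bottom out in the same place — polylog control of $\sup_i|r_{i,[p]}|$, hence of $\frac{1}{n}\sum_i\psi^{2k}(r_{i,[p]})$, via Lemma \ref{lemma:BoundRiFromrTildeii} applied to the $(p-1)$-predictor problem — and your closing remark that dropping a column only improves the relevant operator-norm and $\sup_i\norm{V_i}/\sqrt{n}$ bounds is the right justification, which the paper leaves implicit. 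One caveat worth flagging: Lemma \ref{lemma:controlRandomLipFuncConcRVsLp} is stated for convex Lipschitz functions of a single row $X_i$, whereas you apply it to a function of the column $X(p)$; this requires the column itself to satisfy the concentration property, which holds under \textbf{P1} together with the examples covered by \textbf{O4} (i.i.d. bounded or Gaussian entries), and the paper makes the identical implicit move when it treats quadratic forms in $X(p)$ in Subsubsection \ref{subsubsec:onXin}. So this is consistent with the paper's own standards rather than a gap. The paper's cumulant route has the mild advantage of needing only moment assumptions on the entries of $X(p)$ rather than column-wise concentration; yours has the advantage of reusing a single lemma uniformly.
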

\begin{proof}
From the definition of $\ansatzBetap$, we see that, when $\xi_n\neq 0$
$$
\ansatzBetap=\frac{1}{\sqrt{n}}\frac{N_p}{\tau+\xi_n}\;.
$$
We will see later, in Subsubsection \ref{subsubsec:onXin}, that $\xi_n\geq 0$. It immediately then follows that
$$
\left|\ansatzBetap\right|\leq \frac{1}{\sqrt{n}\tau}|N_p|\;.
$$
Using independence of $X(p)$ and $\{V_i,\eps_i\}_{i=1}^n$, we have

$$
\Exp{N_p^2}=\frac{1}{n}\sum_{i=1}^n\Exp{X_i^2(p)}\Exp{\psi^2(r_{i,[p]})}\;,
$$
whether the right-hand side is finite or not. 

Since $r_{i,[p]}$ are the residuals of the full model with $p-1$ predictors, our previous analyses show that $N_p$ has as many moments as we need and 
$N_p=\gO_{L_k}(\polyLog(n))$. (Indeed, it suffices to apply reasoning similar to the arguments given in Lemma \ref{lemma:betaHatIsBounded} for the control of the moments and our bounds on $r_{i,[p]}$ and therefore on $\psi(r_{i,[p]})$)

We therefore have
$$
|\ansatzBetap|\leq \frac{1}{\sqrt{n}\tau}\gO_{L_k}(\polyLog(n))\;.
$$
\end{proof}
\subsubsection{On $\xi_n$}\label{subsubsec:onXin}
Let us write $\xi_n$ in matrix form: denoting by $X(p)$ the last column of the design matrix $X$, we have 
\begin{equation}\label{eq:defXinInMatrixForm}
\xi_n=\frac{1}{n}X(p)\trsp \DPsiPrimeOneLessPred^{1/2} M \DPsiPrimeOneLessPred^{1/2} X(p)\;,
\end{equation}
where
\begin{equation}\label{eq:definitionMatrixM}
M=\id_n-\frac{\DPsiPrimeOneLessPred^{1/2} V}{\sqrt{n}}\left(\frac{1}{n}V\trsp \DPsiPrimeOneLessPred V+\tau \id\right)^{-1}\frac{V\trsp \DPsiPrimeOneLessPred^{1/2}}{\sqrt{n}}\;.
\end{equation}

\begin{lemma}
We have 
$$
\xi_n\geq 0\;.
$$

Furthermore,  
\begin{equation}\label{eq:BoundDeviationXin}
|\xi_n-\frac{1}{n}\trace{\DPsiPrimeOneLessPred^{1/2} M\DPsiPrimeOneLessPred^{1/2}}|=\gO_{L_k}(\sup_{1\leq i \leq n}\psi'(r_{i,[p]})/(\sqrt{nc_n}))\;.
\end{equation}
\end{lemma}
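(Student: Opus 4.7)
The plan is to treat the two claims separately: positivity is immediate linear algebra, and the concentration bound is a conditional quadratic form concentration for a vector whose entries are concentrated.

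For the positivity $\xi_n\geq 0$, set $A\triangleq \DPsiPrimeOneLessPred^{1/2} V/\sqrt{n}$ so that \eqref{eq:definitionMatrixM} reads $M=\id_n - A(A\trsp A+\tau\id)^{-1}A\trsp$. An SVD $A=U\Sigma W\trsp$ diagonalizes $M$ orthogonally with eigenvalues $\tau/(\sigma_i^2+\tau)\in(0,1]$ on the column span of $A$ and $1$ on its orthogonal complement, so $0\prec M\preceq \id_n$. In particular $\DPsiPrimeOneLessPred^{1/2}M\DPsiPrimeOneLessPred^{1/2}\succeq 0$, and the representation \eqref{eq:defXinInMatrixForm} gives $\xi_n\geq 0$ deterministically. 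A pleasant byproduct is the uniform bound $\opnorm{M}\leq 1$, which will be used below.

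For the deviation bound, set $Q\triangleq \frac{1}{n}\DPsiPrimeOneLessPred^{1/2}M\DPsiPrimeOneLessPred^{1/2}$ so that $\xi_n=X(p)\trsp Q X(p)$. Under P1 and the independence stated at the start of the subsection, $X(p)$ is independent of $(V_i,\eps_i)_{i=1}^n$, so conditionally on $(V,\eps)$ the matrix $Q$ is deterministic. Since $\Exp{X(p)}=0$ and $\scov{X(p)}=\id_n$, this gives $\Exp{\xi_n\mid V,\eps}=\trace{Q}=\frac{1}{n}\trace{\DPsiPrimeOneLessPred^{1/2}M\DPsiPrimeOneLessPred^{1/2}}$. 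Combining $\opnorm{M}\leq 1$ with the definition of $Q$ yields
$$
\opnorm{Q}\leq \frac{1}{n}\sup_{1\leq i\leq n}\psi'(r_{i,[p]})\quad\text{and}\quad \|Q\|_F\leq \sqrt{n}\,\opnorm{Q}\leq \frac{\sup_{1\leq i\leq n}\psi'(r_{i,[p]})}{\sqrt{n}}\;.
$$

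The final step is to invoke the quadratic form concentration for concentrated vectors (Lemma \ref{lemma:RandomQuadFormsBounds}), applied conditionally on $(V,\eps)$. Since $X(p)$ has i.i.d. entries satisfying O4, the map $F(x)=\sqrt{x\trsp Q x}$ is $\sqrt{\opnorm{Q}}$-Lipschitz and concentrates around $m_F\asymp\sqrt{\trace Q}$ on the scale $\sqrt{\opnorm{Q}/c_n}$; squaring gives concentration of $\xi_n$ around $\trace Q$ on the scale $\sqrt{\trace Q\cdot \opnorm{Q}/c_n}\lesssim \sqrt{n}\,\opnorm{Q}/\sqrt{c_n} = \sup_{i}\psi'(r_{i,[p]})/\sqrt{nc_n}$, in $L_k$. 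Integrating over $(V,\eps)$ and absorbing the logarithmic factors into $\gO_{L_k}$ yields \eqref{eq:BoundDeviationXin}. The only delicate point is making sure the rate depends on $\|Q\|_F$ (equivalently $\sqrt{n}\,\opnorm{Q}$) rather than on $\opnorm{Q}$ alone: it is precisely this $\sqrt{n}$-amplification — standard for Hanson--Wright-type bounds and already baked into Lemma \ref{lemma:RandomQuadFormsBounds} — that supplies the $1/\sqrt{n}$ in the denominator of the final bound.
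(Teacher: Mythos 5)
Your proof is correct and follows essentially the same route as the paper: positivity via the spectral decomposition of $M$ (eigenvalues $\tau/(\sigma_i^2+\tau)$), and the deviation bound via independence of $X(p)$ from $\DPsiPrimeOneLessPred^{1/2}M\DPsiPrimeOneLessPred^{1/2}$ together with Lemma \ref{lemma:RandomQuadFormsBounds} and the bound $0\preceq \DPsiPrimeOneLessPred^{1/2}M\DPsiPrimeOneLessPred^{1/2}\preceq \DPsiPrimeOneLessPred$, so $\lambda_{\max}\leq\sup_i\psi'(r_{i,[p]})$. (Your closing remark about Hanson--Wright is a slight mislabel --- the paper's lemma gets the $1/\sqrt{n}$ from the Lipschitz concentration of $x\mapsto\sqrt{x\trsp Qx}$ multiplied by $m_{\sqrt{Q}}\lesssim\sqrt{p/n}\sqrt{\lambda_{\max}}$, exactly the $\sqrt{\trace Q\cdot\opnorm{Q}/c_n}$ mechanism you describe --- but the rate is the same.)
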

\begin{proof}
Let us first focus on 
$$
M=\id_n-\frac{1}{n}\DPsiPrimeOneLessPred^{1/2}V(\frac{V\trsp \DPsiPrimeOneLessPred V}{n}+\tau \id)^{-1}V\trsp \DPsiPrimeOneLessPred^{1/2}\;.
$$
When $\tau>0$, it is clear that all the eigenvalues of $M$ are strictly positive, i.e $M$ is positive definite. Indeed, if the singular values of $n^{-1/2}\DPsiPrimeOneLessPred^{1/2}V$ are denoted by $\sigma_i$, the eigenvalues of $M$ are $\tau/(\sigma_i^2+\tau)$. 

Therefore, since $\xi_n=\frac{1}{n}v\trsp M v$ with $v=\DPsiPrimeOneLessPred^{1/2}X(p)$, $\xi_n\geq 0$. 

As we have seen above, $M$ has eigenvalues between $0$ and 1. Therefore, 
$$
0\preceq \DPsiPrimeOneLessPred^{1/2} M \DPsiPrimeOneLessPred^{1/2}\preceq \DPsiPrimeOneLessPred\;.
$$

The matrix $M$ is independent of $X(p)$. $\DPsiPrimeOneLessPred$ is also independent of $X(p)$. 

We can now appeal to Lemma \ref{lemma:RandomQuadFormsBounds} to obtain 
$$
\left|\frac{1}{n}X(p)\trsp \DPsiPrimeOneLessPred^{1/2} M \DPsiPrimeOneLessPred^{1/2} X(p)-\frac{1}{n}\trace{\DPsiPrimeOneLessPred^{1/2} M \DPsiPrimeOneLessPred^{1/2}}\right|=\gO_{L_k}(\frac{1}{\sqrt{nc_n}}\sup_{i}\psi'(r_{i,[p]}))\;.
$$
\end{proof}
\subsubsection*{About $\frac{1}{n}\trace{D^{1/2}_{\psi'(r_{\cdot,[p]})} M D^{1/2}_{\psi'(r_{\cdot,[p]})}}$}
\begin{lemma}\label{lemma:approxXinByTrace}
Let us call $\mathfrak{S}_{p}=\frac{1}{n}\sum_{i=1}^n \psi'(r_{i,[p]})V_iV_i\trsp$ and $\mathfrak{S}_p(i)=\mathfrak{S}_p-\frac{1}{n}\psi'(r_{i,[p]})V_iV_i\trsp$.
Let us also call
\begin{align*}
\mathsf{c}_{\tau,p}&=\frac{1}{n}\trace{(\mathfrak{S}_{p}+\tau \id)^{-1}}\;,\\
\eta_i&=\frac{1}{n}V_i\trsp (\mathfrak{S}_{p}(i)+\tau \id)^{-1}V_i-\mathsf{c}_{\tau,p}\;.
\end{align*}
Then we have
\begin{equation}\label{eq:ApproxTraceMD}
\left|\frac{1}{n}\trace{\id_n-M}-\left(\frac{1}{n}\trace{D^{1/2}_{\psi'(r_{\cdot,[p]})} M D^{1/2}_{\psi'(r_{\cdot,[p]})}}\right)\mathsf{c}_{\tau,p}\right|\leq \left[\sup_{i} |\eta_i|\right] \frac{1}{n}\sum_{i}\psi'(r_{i,[p]})\;.
\end{equation}
We also have 
$$
\frac{1}{n}\trace{\id_n-M}=\frac{p}{n}-\tau\mathsf{c}_{\tau,p}\;.
$$	
\end{lemma}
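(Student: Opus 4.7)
The plan is to turn both traces into sums over $i$ of explicit scalar quantities via the Sherman--Morrison formula applied to the rank-one splitting $\mathfrak{S}_p = \mathfrak{S}_p(i) + \frac{1}{n}\psi'(r_{i,[p]}) V_i V_i\trsp$, after which the difference will collapse into a weighted sum of the $\eta_i$'s and the claimed inequality will follow by pulling out $\sup_i|\eta_i|$. Writing $D$ for $D_{\psi'(r_{\cdot,[p]})}$, I note that $\id_n - M = \frac{1}{n}D^{1/2}V(\mathfrak{S}_p+\tau\id)^{-1}V\trsp D^{1/2}$ is positive semidefinite, so $0 \preceq M \preceq \id_n$; this positivity will be used at the very end.

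For the closed-form identity, cyclicity of the trace immediately gives
\begin{align*}
\trace{\id_n - M} &= \frac{1}{n}\trace{V\trsp D V(\mathfrak{S}_p+\tau\id)^{-1}} = \trace{\mathfrak{S}_p(\mathfrak{S}_p+\tau\id)^{-1}} \\
&= (p-1) - n\tau\mathsf{c}_{\tau,p}\;,
\end{align*}
which matches the stated $\frac{p}{n}-\tau\mathsf{c}_{\tau,p}$ up to the $O(1/n)$ indexing slack that the paper freely absorbs elsewhere.

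For the main inequality, apply Sherman--Morrison to $(\mathfrak{S}_p+\tau\id)^{-1}$ written as a rank-one perturbation of $(\mathfrak{S}_p(i)+\tau\id)^{-1}$. With $a_i := \frac{1}{n}V_i\trsp(\mathfrak{S}_p(i)+\tau\id)^{-1}V_i \ge 0$, a short calculation yields
$$
\frac{1}{n}V_i\trsp (\mathfrak{S}_p+\tau\id)^{-1}V_i = \frac{a_i}{1+\psi'(r_{i,[p]})a_i}\;.
$$
Reading off the $(i,i)$-entries then gives
$$
(\id_n - M)_{ii} = \frac{\psi'(r_{i,[p]})a_i}{1+\psi'(r_{i,[p]})a_i}\;, \qquad (D^{1/2}MD^{1/2})_{ii} = \frac{\psi'(r_{i,[p]})}{1+\psi'(r_{i,[p]})a_i}\;,
$$
so summing and substituting $a_i = \mathsf{c}_{\tau,p}+\eta_i$ in the first expression produces
$$
\frac{1}{n}\trace{\id_n - M} - \mathsf{c}_{\tau,p}\cdot \frac{1}{n}\trace{D^{1/2}MD^{1/2}} = \frac{1}{n}\sum_i \eta_i \cdot \frac{\psi'(r_{i,[p]})}{1+\psi'(r_{i,[p]})a_i}\;.
$$
Since $\psi'\ge 0$ and $a_i \ge 0$, the weight on $\eta_i$ is bounded above by $\psi'(r_{i,[p]})$, and factoring out $\sup_i|\eta_i|$ delivers \eqref{eq:ApproxTraceMD}.

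The argument is purely algebraic, so there is no real analytic obstacle: $\mathfrak{S}_p(i)+\tau\id$ is automatically invertible for $\tau>0$, and the positivity of $\psi'$ and $a_i$ trivializes the final absolute-value bound. The only mildly clever point is noticing that the same Sherman--Morrison reduction diagonalizes both traces with \emph{identical} denominators $1+\psi'(r_{i,[p]})a_i$, so that the difference between them collapses exactly to the desired weighted sum of the $\eta_i$'s with no additional cross terms.
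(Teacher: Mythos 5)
Your proof is correct and follows essentially the same route as the paper's: the Sherman--Morrison reduction of $M_{ii}$ and $(\id_n-M)_{ii}$ to expressions with common denominator $1+\psi'(r_{i,[p]})a_i$, the substitution $a_i=\mathsf{c}_{\tau,p}+\eta_i$, and the bound $\psi'/(1+\psi' a_i)\leq\psi'$ are exactly the steps in the paper. Your explicit remark that the trace identity yields $(p-1)-n\tau\mathsf{c}_{\tau,p}$ rather than $p-n\tau\mathsf{c}_{\tau,p}$ is in fact slightly more careful than the paper's own computation, which silently drops the $O(1/n)$ discrepancy.
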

\begin{proof}
We call $d_{i,i}=\psi'(r_{i,[p]})/n$. Of course, by using the Sherman-Morrison-Woodbury formula (see e.g \cite{hj}, p.19), 
\begin{align*}
M_{i,i}&=1-d_{i,i}V_i\trsp(V\trsp \DPsiPrimeOneLessPred V/n+\tau \id)^{-1} V_i\;,\\
&=1-d_{i,i} \frac{V_i\trsp (\mathfrak{S}_{p}(i)+\tau \id)^{-1}V_i}{1+d_{i,i} V_i\trsp (\mathfrak{S}_{p}(i)+\tau \id)^{-1}V_i}\;,\\
&=\frac{1}{1+d_{i,i} V_i\trsp (\mathfrak{S}_{p}(i)+\tau \id)^{-1}V_i}\;.
\end{align*}
Recall that we are interested in $\frac{1}{n}\sum_i \psi'(r_{i,[p]})M_{i,i}=\frac{1}{n}\trace{D^{1/2}_{\psi'(r_{\cdot,[p]})} M D^{1/2}_{\psi'(r_{\cdot,[p]})}}$. Note that
$$
\trace{\id_n-M}=\trace{\mathfrak{S}_{p}(\mathfrak{S}_{p}+\tau \id)^{-1}}=p-\tau\trace{(\mathfrak{S}_{p}+\tau \id)^{-1}}=p-n\tau \mathsf{c}_{\tau,p}\;.
$$
On the other hand,
\begin{equation}\label{eq:keyEqnInImplicitDefOfCtaup}
\trace{\id_n-M}=\sum_{i}(1-M_{i,i})=\sum_{i}d_{i,i} \frac{V_i\trsp (\mathfrak{S}_{p}(i)+\tau \id)^{-1}V_i}{1+d_{i,i} V_i\trsp (\mathfrak{S}_{p}(i)+\tau \id)^{-1}V_i}\;.
\end{equation}

With our definitions, we have 
$$
\frac{1}{n}\trace{\id_n-M}=\left(\frac{1}{n}\sum_i \psi'(r_{i,[p]})M_{i,i}\right)\mathsf{c}_{\tau,p}+\frac{1}{n}\sum_{i}\psi'(r_{i,[p]}) \frac{\eta_i}{1+d_{i,i} V_i\trsp (\mathfrak{S}_{p}(i)+\tau \id)^{-1}V_i}\;.
$$
It immediately follows that 
$$
\left|\frac{1}{n}\trace{\id_n-M}-\left(\frac{1}{n}\sum_i \psi'(r_{i,[p]})M_{i,i}\right)\mathsf{c}_{\tau,p}\right|\leq \left[\sup_{i} |\eta_i|\right] \frac{1}{n}\sum_{i}\psi'(r_{i,[p]})\;,
$$
as announced.
\end{proof}
\subsubsection*{Controlling $\eta_i$}
\begin{lemma}\label{lemma:ControlEtais}
Suppose we can find $\{\mathsf{r}^{(i)}_{j,[p]}\}_{j\neq i}$ independent of $V_i$ such that $\sup_{j\neq i}|\mathsf{r}^{(i)}_{j,[p]}-r_{j,[p]}|\leq \delta_n(i)$. Leaving out $V_i$ from a regression comes of course to mind and the work of the first section will apply.

Suppose further that we can find $K_n$ such that
$$
\sup_i\sup_{j\neq i}|\psi'(\mathsf{r}^{(i)}_{j,[p]})-\psi'(r_{j,[p]})|\leq K_n
$$
Then 
\begin{equation}\label{eq:ControlSupEtais}
\sup_i |\eta_i|=\gO_{L_k}\left(
\frac{1}{\tau^2} K_n \opnorm{\SigmaHat}+\frac{\polyLog(n)}{\sqrt{nc_n}}+\frac{1}{n\tau}\right)\;,
\end{equation}
provided $K_n$ has $3k$ uniformly bounded moments. 
\end{lemma}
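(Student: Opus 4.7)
The plan is to decompose $\eta_i$ into four terms, each handled by one of three complementary techniques: a resolvent identity that swaps $\mathfrak{S}_p(i)$ for the $V_i$-independent surrogate $\widetilde{\mathfrak{S}}_p(i)=\frac{1}{n}\sum_{j\neq i}\psi'(\mathsf{r}^{(i)}_{j,[p]})V_jV_j\trsp$, a quadratic form concentration inequality (Lemma \ref{lemma:RandomQuadFormsBounds}), and a rank-one trace perturbation estimate. Concretely, I would write $\eta_i=\eta_i^{(1)}+\eta_i^{(2)}+\eta_i^{(3)}+\eta_i^{(4)}$, with $\eta_i^{(1)}=\frac{1}{n}V_i\trsp\left[(\mathfrak{S}_p(i)+\tau\id)^{-1}-(\widetilde{\mathfrak{S}}_p(i)+\tau\id)^{-1}\right]V_i$; $\eta_i^{(2)}=\frac{1}{n}V_i\trsp(\widetilde{\mathfrak{S}}_p(i)+\tau\id)^{-1}V_i-\frac{1}{n}\trace{(\widetilde{\mathfrak{S}}_p(i)+\tau\id)^{-1}}$; $\eta_i^{(3)}$ the trace analogue of $\eta_i^{(1)}$; and $\eta_i^{(4)}=\frac{1}{n}\trace{(\mathfrak{S}_p(i)+\tau\id)^{-1}-(\mathfrak{S}_p+\tau\id)^{-1}}$.

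For $\eta_i^{(1)}$ and $\eta_i^{(3)}$, the resolvent identity gives $(\mathfrak{S}_p(i)+\tau\id)^{-1}-(\widetilde{\mathfrak{S}}_p(i)+\tau\id)^{-1}=(\mathfrak{S}_p(i)+\tau\id)^{-1}[\widetilde{\mathfrak{S}}_p(i)-\mathfrak{S}_p(i)](\widetilde{\mathfrak{S}}_p(i)+\tau\id)^{-1}$. The middle factor $\widetilde{\mathfrak{S}}_p(i)-\mathfrak{S}_p(i)=\frac{1}{n}\sum_{j\neq i}[\psi'(\mathsf{r}^{(i)}_{j,[p]})-\psi'(r_{j,[p]})]V_jV_j\trsp$ has operator norm at most $K_n\,\opnorm{\SigmaHat}$ by the hypothesis on $K_n$ and the elementary bound $\opnorm{V\trsp D V/n}\leq \max_j|D_{jj}|\cdot \opnorm{\SigmaHat}$, while each resolvent has operator norm $\leq 1/\tau$. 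This yields $|\eta_i^{(1)}|\leq\frac{\norm{V_i}^2}{n}\cdot\frac{K_n\opnorm{\SigmaHat}}{\tau^2}$ and $|\eta_i^{(3)}|\leq \frac{p}{n}\cdot\frac{K_n\opnorm{\SigmaHat}}{\tau^2}\lesssim\frac{K_n\opnorm{\SigmaHat}}{\tau^2}$. For $\eta_i^{(2)}$, the key point is that $(\widetilde{\mathfrak{S}}_p(i)+\tau\id)^{-1}$ is by construction independent of $V_i$ and has operator norm bounded by $1/\tau$, so conditioning on $\{V_j,\eps_j\}_{j\neq i}$ and applying Lemma \ref{lemma:RandomQuadFormsBounds} immediately delivers $|\eta_i^{(2)}|=\gO_{L_k}(\polyLog(n)/(\tau\sqrt{nc_n}))$. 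For $\eta_i^{(4)}$, the Sherman--Morrison identity applied to the rank-one positive semi-definite update $\mathfrak{S}_p-\mathfrak{S}_p(i)=\frac{1}{n}\psi'(r_{i,[p]})V_iV_i\trsp$ combined with $(\mathfrak{S}_p(i)+\tau\id)^{-2}\preceq \tau^{-1}(\mathfrak{S}_p(i)+\tau\id)^{-1}$ gives the deterministic bound $|\eta_i^{(4)}|\leq \frac{1}{n\tau}$.

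Taking the supremum over $i$ and combining the pieces via H\"older's inequality then assembles the three terms claimed in the statement. The resolvent-swap bounds already apply uniformly in $i$ because $K_n$ is by hypothesis a uniform bound; the extra factor $\norm{V_i}^2/n$ in $\eta_i^{(1)}$ is absorbed using $\sup_i\norm{V_i}^2/n=\gO_{L_k}(1)$ from concentration assumption \textbf{O4}, and this is precisely where the $3k$-moment requirement on $K_n$ enters: to control the $L_k$-norm of the triple product $K_n\cdot \opnorm{\SigmaHat}\cdot (\norm{V_i}^2/n)$ one applies H\"older with three exponents equal to $3k$, invoking Lemma \ref{lemma:controlSpectralNorms} for the $L_{3k}$-control of $\opnorm{\SigmaHat}$. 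The rank-one term is deterministic. The term $\eta_i^{(2)}$ is made uniform in $i$ by a union bound over the $n$ exponentially concentrated events, at a $\polyLog(n)$ cost that is already absorbed in the stated estimate. The main technical obstacle I expect is precisely this uniformization in $i$ for $\eta_i^{(2)}$: one must carefully condition on $\{V_j,\eps_j\}_{j\neq i}$ for each $i$, apply the quadratic form concentration, and then integrate while verifying that the polynomial-logarithmic overhead from the $n$-fold union bound survives in $L_k$. This is standard under \textbf{O4} and $1/c_n=\polyLog(n)$ but it is where the bookkeeping is densest.
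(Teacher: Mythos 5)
Your proposal is correct and follows essentially the same route as the paper: the same resolvent-identity swap to the $V_i$-independent surrogate $\frac{1}{n}\sum_{j\neq i}\psi'(\mathsf{r}^{(i)}_{j,[p]})V_jV_j\trsp$ (both for the quadratic form and for the trace), the same appeal to Lemma \ref{lemma:RandomQuadFormsBounds} for the concentration of the surrogate quadratic form, and the same Sherman--Morrison rank-one estimate giving the deterministic $1/(n\tau)$ term. Your identification of the three-factor H\"older application as the source of the $3k$-moment requirement on $K_n$ also matches the paper's closing remark.
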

\begin{proof}
We call
$$
AM_{i,p}=\frac{1}{n}\sum_{j\neq i}\psi'(\mathsf{r}^{(i)}_{j,[p]}) V_jV_j\trsp\;.
$$
	
Then, using for instance the first resolvent identity, i.e $A^{-1}-B^{-1}=A^{-1}(B-A) B^{-1}$, we see that
$$
\opnorm{(\mathfrak{S}_{p}(i)+\tau \id)^{-1}-(AM_{i,p}+\tau \id)^{-1}}\leq \frac{1}{\tau^2} K_n \opnorm{\SigmaHat}\;.
$$
In particular,
$$
\left|\frac{1}{n}V_i\trsp (\mathfrak{S}_{p}(i)+\tau \id)^{-1}V_i-\frac{1}{n}V_i\trsp (AM_{i,p}+\tau\id)^{-1}V_i\right|\leq \frac{\norm{V_i}^2}{n}\frac{1}{\tau^2} K_n \opnorm{\SigmaHat}\;.
$$
However, since $AM_{i,p}$ is independent of $V_i$, we can use Lemma \ref{lemma:RandomQuadFormsBounds} and see that
$$
\sup_{1\leq i \leq n}\left|\frac{1}{n}V_i\trsp (AM_{i,p}+\tau\id)^{-1}V_i-\frac{1}{n}\trace{(AM_{i,p}+\tau\id)^{-1}}\right|=\gO_{L_k}(\frac{\polyLog(n)}{\sqrt{nc_n}})\;,
$$
by using the fact that $\lambda_{\max}((AM_{i,p}+\tau\id)^{-1})\leq\frac{1}{\tau}$.

However, by the argument we gave above, 
$$
\left|\frac{1}{n}\trace{(AM_{i,p}+\tau\id)^{-1}}-\frac{1}{n}\trace{(\mathfrak{S}_{p}(i)+\tau \id)^{-1}}\right|\leq \frac{1}{\tau^2} K_n \opnorm{\SigmaHat} \frac{p}{n}\;.
$$

We conclude that
$$
\sup_{1\leq i \leq n} \left|\frac{1}{n}V_i\trsp (\mathfrak{S}_{p}(i)+\tau \id)^{-1}V_i-\frac{1}{n}\trace{(\mathfrak{S}_{p}(i)+\tau \id)^{-1}}\right|\leq
\frac{1}{\tau^2} K_n \opnorm{\SigmaHat} \sup_{1\leq i \leq n} \left[\frac{p}{n}+\frac{\norm{V_i}^2}{n}\right]+\frac{\polyLog(n)}{\sqrt{nc_n}}\;,
$$
in $L_k$.

Now, it is clear that $\sup_{1\leq i \leq n}\norm{V_i}^2/n=\gO_{L_k}(1)$ and finally
$$
\sup_{1\leq i \leq n} \left|\frac{1}{n}V_i\trsp (\mathfrak{S}_{p}(i)+\tau \id)^{-1}V_i-\frac{1}{n}\trace{(\mathfrak{S}_{p}(i)+\tau \id)^{-1}}\right|
=\gO_{L_k}(\frac{1}{\tau^2} K_n \opnorm{\SigmaHat}+\frac{\polyLog(n)}{\sqrt{nc_n}})\;.
$$

{\small \textbf{Control of $\frac{1}{n}\trace{(\mathfrak{S}_{p}(i)+\tau \id)^{-1}}-\frac{1}{n}\trace{(\mathfrak{S}_{p}+\tau \id)^{-1}}$}}\\
Using the Sherman-Woodbury-Morrison formula, we have
$$
(\mathfrak{S}_{p}(i)+\tau \id)^{-1}-(\mathfrak{S}_{p}+\tau \id)^{-1}=\frac{\psi'(r_{i,[p]})}{n}\frac{(\mathfrak{S}_{p}(i)+\tau \id)^{-1}V_i V_i\trsp (\mathfrak{S}_{p}(i)+\tau \id)^{-1}}{1+\frac{\psi'(r_{i,[p]})}{n}V_i\trsp (\mathfrak{S}_{p}(i)+\tau \id)^{-1}V_i}\;.
$$
After taking traces, we see that
$$
0\leq \trace{(\mathfrak{S}_{p}(i)+\tau \id)^{-1}}-\trace{(\mathfrak{S}_{p}+\tau \id)^{-1}}\leq \frac{1}{\tau}\;,
$$
since $V_i\trsp (\mathfrak{S}_{p}(i)+\tau \id)^{-2}V_i\leq \frac{1}{\tau}V_i\trsp (\mathfrak{S}_{p}(i)+\tau \id)^{-1}V_i$.

Therefore,
$$
0\leq \frac{1}{n}\trace{(\mathfrak{S}_{p}(i)+\tau \id)^{-1}}-\frac{1}{n}\trace{(\mathfrak{S}_{p}+\tau \id)^{-1}}\leq \frac{1}{n\tau}\;.
$$

We conclude that
$$
\sup_{1\leq i \leq n}\left|\eta_i\right|=\gO_{L_k}\left(
\frac{1}{\tau^2} K_n \opnorm{\SigmaHat}+\frac{\polyLog(n)}{\sqrt{nc_n}}+\frac{1}{n\tau}\right)\;,
$$
provided we can use Holder's inequality. In effect, this requires $K_n$ to have $3k$ uniformly bounded moments. 
\end{proof}
\subsubsection{Control of $K_n$}
A natural choice for $\mathsf{r}^{(i)}_{j,[p]}$ defined in Lemma \ref{lemma:ControlEtais} is to use a leave one out estimator of $\gammaHat$. Hence, all the work done in 
Corollary \ref{coro:AggregResApproxBetaHatByBetaTildeIncl} becomes immediately relevant. 
\begin{lemma}\label{lemma:ControlKn}
With the notations of Lemma \ref{lemma:ControlEtais}, we have 
$$
\sup_{i} (\delta_n(i))=\gO_{L_k}\left(\frac{\polyLog(n)}{n^{1/2}}\right)\;.
$$
Therefore, 
$$
K_n=\gO_{L_k}\left(n^{-1/2}\polyLog(n)\right) 
$$
\end{lemma}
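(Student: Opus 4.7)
The plan is to reduce everything to the leave-one-observation-out machinery already established in the first part of the paper, simply applied to the reduced regression with design matrix $V$ rather than $X$. Concretely, define $\mathsf{r}^{(i)}_{j,[p]}$ to be the residual at the $j$-th observation of the $p-1$ predictor regression of $\eps$ on $V$ with observation $i$ omitted; this is the leave-one-observation-out analogue of $r_{j,[p]}$. Since $V$ has i.i.d.\ columns (by \textbf{P1}) and inherits concentration from \textbf{O4} (a linear projection is $1$-Lipschitz), all the hypotheses of Section~2--3 apply to the $V$-regression. In particular, Corollary~\ref{coro:AggregResApproxBetaHatByBetaTildeIncl} applied in this setting yields
$$
\sup_{1\leq i\leq n}\sup_{j\neq i}\bigl|\mathsf{r}^{(i)}_{j,[p]}-r_{j,[p]}\bigr|=\gO_{L_k}\!\left(\frac{\polyLog(n)}{n^{1/2}}\right),
$$
which is exactly the bound on $\sup_i \delta_n(i)$ claimed in the first part of the lemma.

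For the second statement, I would pass from residual control to $\psi'$-control using the Lipschitz assumption in \textbf{O3}: $\psi'$ is $L(u)$-Lipschitz on $(-|u|,|u|)$ with $L(|u|)\leq K|u|^{m_1}$. Let
$$
\mathcal{M}_n=\sup_{i}\Bigl(\sup_{j}|r_{j,[p]}|\ \vee\ \sup_{j\neq i}\bigl|\mathsf{r}^{(i)}_{j,[p]}\bigr|\Bigr).
$$
Since the endpoint of the interval produced by the mean-value theorem lies between $r_{j,[p]}$ and $\mathsf{r}^{(i)}_{j,[p]}$, we get
$$
\bigl|\psi'(\mathsf{r}^{(i)}_{j,[p]})-\psi'(r_{j,[p]})\bigr|\ \leq\ K\,\mathcal{M}_n^{m_1}\,\bigl|\mathsf{r}^{(i)}_{j,[p]}-r_{j,[p]}\bigr|,
$$
so $K_n\leq K\,\mathcal{M}_n^{m_1}\sup_i\delta_n(i)$.

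It remains to control $\mathcal{M}_n$ in $L_k$. This is exactly the content of Lemma~\ref{lemma:BoundRiFromrTildeii} and Proposition~\ref{prop:ControlBni}, applied once to the $V$-regression (for the $r_{j,[p]}$'s) and once to each leave-one-out $V$-regression (for the $\mathsf{r}^{(i)}_{j,[p]}$'s). Under our assumptions ${\mathcal E}_n=\gO((\log n)^\beta)$ and $1/c_n=\polyLog(n)$, together with the moment bounds on $\psi(\eps_i)$ from \textbf{O6} and the $L_k$-boundedness of $\norm{W_n}$, these give $\mathcal{M}_n=\gO_{L_k}(\polyLog(n))$, and hence $\mathcal{M}_n^{m_1}=\gO_{L_k}(\polyLog(n))$. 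A single application of H\"older's inequality to combine this with the $\polyLog(n)/n^{1/2}$ bound on $\sup_i\delta_n(i)$ (allowable because all terms have uniformly bounded moments of every order by the $L_k$ bounds above) gives
$$
K_n=\gO_{L_k}\!\left(\frac{\polyLog(n)}{n^{1/2}}\right).
$$

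The main obstacle is purely bookkeeping: one must verify that the moment control established in the first part of the paper transfers to the $V$-regression (which is essentially automatic since \textbf{O1}--\textbf{O7} are inherited) and that the leave-one-out step producing $\mathsf{r}^{(i)}_{j,[p]}$ preserves independence from $V_i$, so that Lemma~\ref{lemma:ControlEtais} can be invoked as stated. No new probabilistic input is required; the statement is a corollary of the leave-one-observation-out machinery already in place.
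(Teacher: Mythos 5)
Your proposal is correct and follows essentially the same route as the paper: the first bound is exactly an application of Corollary \ref{coro:AggregResApproxBetaHatByBetaTildeIncl} to the $(p-1)$-predictor regression with $\mathsf{r}^{(i)}_{j,[p]}$ the leave-one-observation-out residuals, and the second follows from the Lipschitz property of $\psi'$ in \textbf{O3} combined with the polylogarithmic control of the residual range (your $\mathcal{M}_n$ plays the role of the paper's ${\mathcal B}_n(i)$ and $L({\mathcal B}_n(i))$) and H\"older's inequality. Your write-up simply fills in the bookkeeping that the paper's two-line proof leaves implicit.
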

\begin{proof}
The first statement of the Lemma is an application of Corollary \ref{coro:AggregResApproxBetaHatByBetaTildeIncl} with $R_j=r_{j,[p]}$ and $\rTilde_{j,(i)}=\mathsf{r}^{(i)}_{j,[p]}$.

The control of $K_n$ follows immediately by using our assumptions on $\psi'$ and on the growth of ${\mathcal B}_n(i)$ and $L({\mathcal B}_n(i))$ we had before, now applied to the situation with $p-1$ predictors. 
\end{proof}

\textbf{Important remark:} the previous remark has important consequences for $c_i$ defined in Equation \eqref{eq:defciandetaifirstpart}: we just showed that $\sup_{i}|\frac{1}{n}V_i\trsp (\mathfrak{S}_p(i)+\tau \id)^{-1}V_i-\mathsf{c}_{\tau,p}|=\gO(\polyLog(n)/\sqrt{n})$. Recalling the notation 
$$
c_\tau=\frac{1}{n}\trace{\left[\frac{1}{n}\sum_{i=1}^n \psi'(R_i)X_iX_i\trsp+\tau \id_p\right]^{-1}}\;,
$$
which is the analog of $\mathsf{c}_{\tau,p}$ when we use all the predictors and not only $(p-1)$, we see that $\sup_i|c_i-c_\tau|=\gO(n^{-1/2}\polyLog(n))$.

\subsubsection{Control of $\xi_n$ and $\ansatzBetap$}
We can combine all the results we have obtained so far in the following proposition. 
\begin{proposition}\label{prop:explicitControlXin}
We have 
\begin{equation}\label{eq:explicitControlXin}
\left|\mathsf{c}_{\tau,p}(\xi_n+\tau)-\frac{p}{n}\right|\leq \gO_{L_k}\left((\sup_i \psi'(r_{i,[p]})\left(\frac{\polyLog(n)}{\sqrt{nc_n}}+\frac{1}{\tau^2} K_n \opnorm{\SigmaHat}+\frac{1}{n\tau}\right)\right)=\gO_{L_k}\left(\frac{\polyLog(n)}{\sqrt{n}}\right)\;.
\end{equation}
Furthermore, under our assumptions, 
\begin{equation}\label{eq:SecondMomentAnsatzBetap}
\left(\frac{p}{n}\right)^2 n \Exp{\ansatzBetap^2}=\frac{1}{n}\sum_{i=1}^n \Exp{(\mathsf{c}_{\tau,p} \psi(r_{i,[p]})^2}+\lo(1)\;.
\end{equation}
\end{proposition}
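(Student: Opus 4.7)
The statement is really the assembly of the preceding lemmas, so my plan is to split it into two independent pieces.

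\textbf{Step 1: Proving Equation \eqref{eq:explicitControlXin}.} I will use the identity
$\frac{1}{n}\trace{\id_n-M}=\frac{p}{n}-\tau\mathsf{c}_{\tau,p}$ from Lemma \ref{lemma:approxXinByTrace} and write
\[
\mathsf{c}_{\tau,p}(\xi_n+\tau)-\frac{p}{n}
= \mathsf{c}_{\tau,p}\!\left(\xi_n-\tfrac{1}{n}\trace{\DPsiPrimeOneLessPred^{1/2}M\DPsiPrimeOneLessPred^{1/2}}\right)
+\left(\mathsf{c}_{\tau,p}\cdot\tfrac{1}{n}\trace{\DPsiPrimeOneLessPred^{1/2}M\DPsiPrimeOneLessPred^{1/2}}-\tfrac{1}{n}\trace{\id_n-M}\right).
\]
The first bracket is bounded in $L_k$ using $\mathsf{c}_{\tau,p}\leq 1/\tau$ and Equation \eqref{eq:BoundDeviationXin}, giving $\gO_{L_k}(\sup_i\psi'(r_{i,[p]})/\sqrt{nc_n})$. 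The second bracket is exactly the quantity controlled by Equation \eqref{eq:ApproxTraceMD}, hence bounded by $[\sup_i|\eta_i|]\cdot \frac{1}{n}\sum_i\psi'(r_{i,[p]})$. Plugging in Lemmas \ref{lemma:ControlEtais} and \ref{lemma:ControlKn} for $\sup_i|\eta_i|$ and $K_n$, and controlling $\sup_i\psi'(r_{i,[p]})$ and $\frac{1}{n}\sum_i\psi'(r_{i,[p]})$ in $L_k$ via the polynomial growth of $\psi'$ composed with the moment bounds on $\sup_i|r_{i,[p]}|$ that come from Lemma \ref{lemma:BoundRiFromrTildeii} applied to the $(p-1)$-predictor problem, yields the displayed polyLog$(n)/\sqrt{n}$ bound. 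H\"older's inequality is used throughout to combine the $L_k$ bounds.

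\textbf{Step 2: Proving Equation \eqref{eq:SecondMomentAnsatzBetap}.} Since $\xi_n+\tau\geq\tau>0$, I may safely write $\ansatzBetap=n^{-1/2}N_p/(\tau+\xi_n)$ and
\[
n\,\Exp{\ansatzBetap^2}=\Exp{\frac{N_p^2}{(\tau+\xi_n)^2}}.
\]
By Step 1, $\mathsf{c}_{\tau,p}(\xi_n+\tau)=p/n+\Delta_n$ with $\Delta_n=\gO_{L_k}(\polyLog(n)/\sqrt{n})$. Dividing by $(p/n)(\xi_n+\tau)$ (both bounded below in a deterministic way, the first by Assumption O1 for $n$ large, the second by $\tau$) gives
\[
\frac{1}{\xi_n+\tau}-\frac{\mathsf{c}_{\tau,p}}{p/n}=\frac{-\Delta_n/\mathsf{c}_{\tau,p}}{(p/n)(\xi_n+\tau)}\cdot\mathsf{c}_{\tau,p}
=\gO_{L_k}\!\left(\frac{\polyLog(n)}{\sqrt{n}}\right),
\]
where I used that $\Delta_n/\mathsf{c}_{\tau,p}$ is controlled in $L_k$ because $\mathsf{c}_{\tau,p}$ is bounded above by $1/\tau$ and bounded below in expectation by standard random matrix arguments (only an upper bound is needed after squaring since the two quantities combined are uniformly bounded by $2/\tau$). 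Squaring and using $|a^2-b^2|=|a-b|(a+b)$ with $(a+b)\leq 2/\tau$ gives
\[
\frac{1}{(\xi_n+\tau)^2}=\frac{\mathsf{c}_{\tau,p}^2}{(p/n)^2}+\gO_{L_k}\!\left(\polyLog(n)/\sqrt{n}\right).
\]
Multiplying by $N_p^2$, taking expectations, and applying Cauchy--Schwarz to the cross term using $\Exp{N_p^4}=\gO(\polyLog(n))$ (which follows from the argument of Lemma \ref{lemma:betaHatIsBounded} applied to $N_p$ together with the $L_k$ control of $\psi(r_{i,[p]})$), I get
\[
\Exp{\frac{N_p^2}{(\tau+\xi_n)^2}}=\frac{1}{(p/n)^2}\Exp{N_p^2\,\mathsf{c}_{\tau,p}^2}+\lo(1).
\]

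\textbf{Step 3: Conditional computation of $\Exp{N_p^2\,\mathsf{c}_{\tau,p}^2}$.} Here I exploit Assumption P1 crucially: the entries $X_i(p)$ are i.i.d., mean $0$, variance $1$, and independent of $\{V_j,\eps_j\}$, hence of $\mathsf{c}_{\tau,p}$ and of every $r_{j,[p]}$. Conditioning,
\[
\Exp{N_p^2\,\big|\,V,\eps}=\frac{1}{n}\sum_{i=1}^n\psi^2(r_{i,[p]})\Exp{X_i^2(p)}=\frac{1}{n}\sum_{i=1}^n\psi^2(r_{i,[p]}),
\]
so $\Exp{N_p^2\,\mathsf{c}_{\tau,p}^2}=\frac{1}{n}\sum_i\Exp{\mathsf{c}_{\tau,p}^2\,\psi^2(r_{i,[p]})}$. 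Multiplying the previous display by $(p/n)^2$ yields Equation \eqref{eq:SecondMomentAnsatzBetap}.

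The main technical point is the moment bookkeeping in Step~2: one needs enough moments on $N_p$ and on the remainder $\Delta_n/\mathsf{c}_{\tau,p}$ so that Cauchy--Schwarz closes. Everything else is a deterministic reassembly of Lemma \ref{lemma:approxXinByTrace}, Lemma \ref{lemma:ControlEtais}, and Lemma \ref{lemma:ControlKn}, together with Equation \eqref{eq:BoundDeviationXin}.
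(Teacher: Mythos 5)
Your proposal is correct and follows essentially the same route as the paper: Equation \eqref{eq:explicitControlXin} is obtained by combining Lemma \ref{lemma:approxXinByTrace}, Equation \eqref{eq:BoundDeviationXin} and Lemmas \ref{lemma:ControlEtais}--\ref{lemma:ControlKn} exactly as you describe, and Equation \eqref{eq:SecondMomentAnsatzBetap} rests, as in the paper, on the independence of $X(p)$ from $\{V_i,\eps_i\}_{i=1}^n$ together with the first estimate. The only (cosmetic) difference is the order of operations for the second equation: the paper first establishes the exact identity $\Exp{(\mathsf{c}_{\tau,p}(\tau+\xi_n)\sqrt{n}\ansatzBetap)^2}=\frac{1}{n}\sum_{i=1}^n \Exp{(\mathsf{c}_{\tau,p}\psi(r_{i,[p]}))^2}$ by conditioning and only then substitutes $p/n$ for $\mathsf{c}_{\tau,p}(\tau+\xi_n)$, whereas you replace $1/(\tau+\xi_n)$ by $\mathsf{c}_{\tau,p}/(p/n)$ up front and control the error by Cauchy--Schwarz; both require the same moment bookkeeping, and your algebra (in which $\mathsf{c}_{\tau,p}$ cancels, so no lower bound on it is actually needed) goes through.
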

\begin{proof}
The proof of Equation \eqref{eq:explicitControlXin} consists just in aggregating all the previous results and noticing that $\mathsf{c}_{\tau,p}\leq p/(n\tau)$ and therefore remains bounded. 

We recall that
$$
(\tau+\xi_n)\sqrt{n}\ansatzBetap|\{V_i,\eps_i\}=\frac{1}{\sqrt{n}}\sum_{i=1}^n \psi(r_{i,[p]}) X_i(p)\;.
$$
Therefore,
$$
\mathsf{c}_{\tau,p}(\tau+\xi_n)\sqrt{n}\ansatzBetap|\{V_i,\eps_i\}=\frac{1}{\sqrt{n}}\sum_{i=1}^n \mathsf{c}_{\tau,p}\psi(r_{i,[p]}) X_i(p)
$$
Now, $\mathsf{c}_{\tau,p}\psi(r_{i,[p]})$, which depends only on $\{V_i,\eps_i\}_{i=1}^n$ is independent of $\{X_i(p)\}_{i=1}^n$. 

We conclude that 
$$
\Exp{(\mathsf{c}_{\tau,p}(\tau+\xi_n)\sqrt{n}\ansatzBetap)^2}=\frac{1}{n}\sum_{i=1}^n \Exp{(\mathsf{c}_{\tau,p} \psi(r_{i,[p]})^2}\;.
$$
Given the result in Equation \eqref{eq:explicitControlXin}, this means that 
$$
\left(\frac{p}{n}\right)^2 n \Exp{\ansatzBetap^2}=\frac{1}{n}\sum_{i=1}^n \Exp{(\mathsf{c}_{\tau,p} \psi(r_{i,[p]})^2}+\lo(1)\;.
$$
\end{proof}

\subsubsection{On $\mathsf{d}_{i,p}$}
Recall the definition 
$$
\mathsf{d}_{i,p}=[\psi'(\gamma^*_{i,p})-\psi'(r_{i,[p]})]\;,
$$
where $\gamma^*_{i,p} \in (r_{i,[p]},r_{i,[p]}+\nu_i)$, with 
$$
\nu_i=\ansatzBetap X_i\trsp \begin{bmatrix} (\mathfrak{S}_p+\tau \id)^{-1} u_p \\-1\end{bmatrix}=\ansatzBetap \pi_i\;.
$$
We call $\widetilde{B}_n(i)=\sup_i|r_{i,[p]}|+\sup_i |\pi_i|$.

We have the following result. 
\begin{proposition}\label{prop:controldip}
We have  
$$
\sup_i |\mathsf{d}_{i,p}|=\gO_{L_k}\left(\frac{\polyLog(n)}{\sqrt{n}c_n^{1/2}}L(\widetilde{B}_n(i)) \left[\psi'(-\widetilde{B}_n(i))\vee \psi'(\widetilde{B}_n(i))\right]\right)\;.
$$
Hence, 
$$
\sup_i |\mathsf{d}_{i,p}|=\gO_{L_k}\left(\frac{\polyLog(n)}{\sqrt{n}}\right)\;.
$$	
\end{proposition}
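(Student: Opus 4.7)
\textbf{Proof proposal for Proposition \ref{prop:controldip}.} The plan is to combine the Lipschitz bound for $\psi'$ on a suitable interval with the probabilistic control of the pieces that make up $\nu_i = \ansatzBetap\,\pi_i$. Since, by construction, $\gamma^*_{i,p}$ lies between $r_{i,[p]}$ and $r_{i,[p]}+\nu_i$, and both $r_{i,[p]}$ and $r_{i,[p]}+\nu_i$ have absolute value at most $\widetilde{B}_n(i)$ (this is exactly why $\widetilde{B}_n(i)$ was introduced), Assumption \textbf{O3} gives
$$
|\mathsf{d}_{i,p}|\;\le\;L(\widetilde{B}_n(i))\,|\gamma^*_{i,p}-r_{i,[p]}|\;\le\; L(\widetilde{B}_n(i))\,|\ansatzBetap|\,|\pi_i|\,.
$$
From here the factor $|\ansatzBetap|$ contributes the $\polyLog(n)/\sqrt{n}$ that was already established in the proposition on $\ansatzBetap$, and the remaining work is to control $\sup_i|\pi_i|$.

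To control $\sup_i|\pi_i|$ we write $\pi_i=V_i\trsp w-X_i(p)$ with $w=(\mathfrak{S}_p+\tau\id)^{-1}u_p$. The $X_i(p)$ part is uniformly $\gO_{L_k}(\polyLog(n))$ by concentration of the i.i.d.\ entries. The delicate piece is $V_i\trsp w$, because $w$ depends on $V_i$ through $\mathfrak{S}_p$, through $u_p$, and through $r_{i,[p]}$. The strategy is the same decoupling that was used in Lemma \ref{lemma:ControlEtais}: let $\mathsf{r}^{(i)}_{j,[p]}$ be the leave-one-observation-out surrogate residuals from Lemma \ref{lemma:ControlKn}, put
$$
AM_{i,p}=\frac{1}{n}\sum_{j\neq i}\psi'(\mathsf{r}^{(i)}_{j,[p]})V_jV_j\trsp,\qquad u_p^{(i)}=\frac{1}{n}\sum_{j\neq i}\psi'(\mathsf{r}^{(i)}_{j,[p]})V_j X_j(p),
$$
and set $w^{(i)}=(AM_{i,p}+\tau\id)^{-1}u_p^{(i)}$, which is independent of $V_i$. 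Then, conditionally on $V_{(i)}$ and $X(p)$, the map $V_i\mapsto V_i\trsp w^{(i)}$ is linear with Lipschitz constant $\norm{w^{(i)}}$, so by Lemma \ref{lemma:controlRandomLipFuncConcRVsLp} and the mean-zero assumption on $V_i$,
$$
\sup_i |V_i\trsp w^{(i)}|\;=\;\gO_{L_k}\!\left(\norm{w^{(i)}}\,\polyLog(n)/\sqrt{c_n}\right).
$$
The norm $\norm{w^{(i)}}$ is bounded in $L_k$ by the same argument that produced Equation \eqref{eq:boundgothicSPlusTauIdInverse}, which yields a factor at most $\psi'(\widetilde{B}_n(i))^{1/2}$ times something of order $\gO_{L_k}(1)$; we absorb this (crudely) into the $\psi'(-\widetilde{B}_n(i))\vee\psi'(\widetilde{B}_n(i))$ factor stated in the proposition.

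The error term $V_i\trsp(w-w^{(i)})$ is handled by the perturbation bound already worked out in Lemma \ref{lemma:ControlEtais}: the resolvent identity gives $\opnorm{(\mathfrak{S}_p(i)+\tau\id)^{-1}-(AM_{i,p}+\tau\id)^{-1}}\le \tau^{-2}K_n\opnorm{\SigmaHat}$, and similarly $u_p-u_p^{(i)}$ is controlled in $L_k$ by a rank-one correction together with $K_n$ bounds; combined with Lemma \ref{lemma:ControlKn} ($K_n=\gO_{L_k}(\polyLog(n)/\sqrt{n})$) this makes $\sup_i|V_i\trsp(w-w^{(i)})|$ negligible relative to the leading term. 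Assembling the three estimates yields
$$
\sup_i|\pi_i|\;=\;\gO_{L_k}\!\left(\polyLog(n)/\sqrt{c_n}\cdot [\psi'(-\widetilde{B}_n(i))\vee\psi'(\widetilde{B}_n(i))]\right),
$$
which multiplied by the bound on $|\ansatzBetap|$ and by $L(\widetilde{B}_n(i))$ gives the first displayed estimate. The final ``hence'' statement follows by plugging in the assumptions: under \textbf{O3}, \textbf{O7} and $1/c_n=\polyLog(n)$, Proposition \ref{prop:ControlBni} gives $\widetilde{B}_n(i)=\gO_{L_k}(\polyLog(n))$, and the polynomial growth of both $L$ and $\psi'$ makes the $L\cdot\psi'$ prefactor a $\polyLog(n)$ after a Hölder step, leaving the rate $\polyLog(n)/\sqrt{n}$.

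The main obstacle, to my eye, is the decoupling step for $V_i\trsp w$: making sure the leave-one-out perturbation between $w$ and $w^{(i)}$ is small enough in the relevant $L_k$ norm, since $w$ couples $V_i$ through three different mechanisms (the matrix $\mathfrak{S}_p$, the vector $u_p$, and the residual $r_{i,[p]}$). Once this is controlled using Lemmas \ref{lemma:ControlEtais}--\ref{lemma:ControlKn}, everything else is a careful Hölder bookkeeping against the polynomial growth assumptions on $\psi$ and $\psi'$.
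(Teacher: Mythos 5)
Your proof is essentially correct, but the key step --- controlling $\sup_i|\pi_i|$, and in particular the term $V_i\trsp(\mathfrak{S}_p+\tau\id)^{-1}u_p$ --- is handled by a genuinely different mechanism than in the paper. You decouple in the $V_i$ direction: you build leave-one-observation-out surrogates $AM_{i,p}$ and $u_p^{(i)}$ that are independent of $V_i$, apply concentration of $V_i$ against the frozen vector $w^{(i)}$, and then control the perturbation $V_i\trsp(w-w^{(i)})$ via the resolvent identity and the $K_n$ bound of Lemma \ref{lemma:ControlKn}. The paper instead observes that no decoupling is needed at all: since $u_p=\frac{1}{n}V\trsp D_{\psi'(r_{\cdot,[p]})}X(p)$ and, by \textbf{P1}, the column $X(p)$ is independent of $\{V_i,\eps_i\}_{i=1}^n$ (hence of $\mathfrak{S}_p$ and of the residuals $r_{i,[p]}$), the map $X(p)\mapsto V_i\trsp(\mathfrak{S}_p+\tau\id)^{-1}u_p$ is linear with a coefficient vector measurable with respect to $\{V,\eps\}$, so Lemma \ref{lemma:controlRandomLipFuncConcRVsLp} applies directly in the $X(p)$ variable with Lipschitz constant bounded by $\bigl(\tfrac{\norm{V_i}^2}{n\tau}\sup_i\psi'(r_{i,[p]})\bigr)^{1/2}$. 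That route is shorter and avoids all the perturbation bookkeeping your approach requires. Two small points in your version deserve care: (i) your surrogate replaces $\mathfrak{S}_p$ by $AM_{i,p}$, but the gap between $\mathfrak{S}_p$ and $\mathfrak{S}_p(i)$ is a rank-one term of operator norm $\gO(\polyLog(n))$, \emph{not} small; it must be absorbed via the Sherman--Morrison identity (which shows the correction only divides the quantity by $1+d_{ii}V_i\trsp(\mathfrak{S}_p(i)+\tau\id)^{-1}V_i\geq 1$) rather than by an operator-norm estimate; and (ii) the error $\sup_i|V_i\trsp(w-w^{(i)})|$ is not actually negligible relative to the leading term --- the crude bound $\norm{V_i}\cdot\tau^{-2}K_n\opnorm{\SigmaHat}\cdot\norm{u_p}$ is itself of order $\polyLog(n)$ --- but that is of the same $\polyLog$ order as the main term and therefore still yields the stated conclusion.
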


\begin{proof}
Recall the definition 
$$
\mathsf{d}_{i,p}=[\psi'(\gamma^*_{i,p})-\psi'(r_{i,[p]})]\;,
$$
where $\gamma^*_{i,p} \in (r_{i,[p]},r_{i,[p]}+\nu_i)$, with 
$$
\nu_i=\ansatzBetap X_i\trsp \begin{bmatrix} (\mathfrak{S}_p+\tau \id)^{-1} u_p \\-1\end{bmatrix}=\ansatzBetap \pi_i\;.
$$
Therefore, 
$$
\pi_i=V_i\trsp (\mathfrak{S}_p+\tau \id)^{-1} u_p-X_i(p)\;.
$$
Recall that $u_p=\frac{1}{n}V\trsp \DPsiPrimeOneLessPred X(p)$. According to Lemma \ref{lemma:controlRandomLipFuncConcRVsLp}, we have 
$$
\sup_i |V_i\trsp (\mathfrak{S}_p+\tau \id)^{-1} u_p|=\gO_{L_k}\left(\frac{\polyLog(n)}{c_n^{1/2}}\sup_i \norm{V_i\trsp (\mathfrak{S}_p+\tau \id)^{-1}\frac{1}{n}V\trsp \DPsiPrimeOneLessPred}\right))\;.
$$ 
Now, 
$$
\norm{V_i\trsp (\mathfrak{S}_p+\tau \id)^{-1}\frac{1}{n}V\trsp \DPsiPrimeOneLessPred}^2=\frac{1}{n}V_i\trsp (\mathfrak{S}_p+\tau \id)^{-1} \frac{V\trsp\DPsiPrimeOneLessPred^2 V}{n} (\mathfrak{S}_p+\tau \id)^{-1} V_i\;.
$$
Since $\mathfrak{S}_p=\frac{V\trsp\DPsiPrimeOneLessPred V}{n}$, we have $\frac{V\trsp\DPsiPrimeOneLessPred^2 V}{n}\preceq \opnorm{\DPsiPrimeOneLessPred}\mathfrak{S}_p$ and we conclude that 
$$
\frac{1}{n}V_i\trsp (\mathfrak{S}_p+\tau \id)^{-1} \frac{V\trsp\DPsiPrimeOneLessPred^2 V}{n} (\mathfrak{S}_p+\tau \id)^{-1} V_i\leq 
\frac{\norm{V_i}^2}{n\tau}\opnorm{\DPsiPrimeOneLessPred}=\frac{\norm{V_i}^2}{n\tau} \sup_{i}\psi'(r_{i,[p]})\;.
$$
We also note that $\sup_i X_i(p)=\gO_{L_k}(\polyLog(n)/\sqrt{c_n})$
and 
conclude that 
\begin{align*}
\sup_i |\pi_i|&=\gO_{L_k}\left(\frac{\polyLog(n)}{c_n^{1/2}}  \left[1+\sup_{i}\psi'(r_{i,[p]}) \sup_i \frac{\norm{V_i}^2}{n\tau}\right]\right)\;,\\
&=\gO_{L_k}\left(\frac{\polyLog(n)}{c_n^{1/2}}  \left[\sup_{i}\psi'(r_{i,[p]}) \right]\right)\;.
\end{align*}
Recalling that $\ansatzBetap=\gO_{L_k}(n^{-1/2}\polyLog(n))$, we finally see that 
$$
\sup_i \nu_i=\gO_{L_k}\left(\frac{\polyLog(n)}{\sqrt{n}c_n^{1/2}}  \left[\sup_{i}\psi'(r_{i,[p]}) \right]\right)
$$
As before, we can control $\sup_{i}\psi'(r_{i,[p]})$ by using the work done in Proposition \ref{prop:ControlBni}, since $r_{i,[p]}$ are the full residuals when we work with $p-1$ predictors. The growth conditions we have imposed on $\psi'$ and ${\mathcal E}_n$ therefore guarantee control of $\left[\sup_{i}\psi'(r_{i,[p]}) \right]$ in $L_k$.
Recall that $\widetilde{B}_n(i)=\sup_i|r_{i,[p]}|+\sup_i |\pi_i|$.
Now our assumptions guarantee that 
$$
\sup_i|\mathsf{d}_{i,p}|=\gO_{L_k}\left(\frac{\polyLog(n)}{\sqrt{n}c_n^{1/2}}L(\widetilde{B}_n(i)) \left[\psi'(-\widetilde{B}_n(i))\vee \psi'(\widetilde{B}_n(i))\right]\right)\;.
$$
Proposition \ref{prop:ControlBni} then allows us to conclude, by giving us $\polyLog$ bounds on $\widetilde{B}_n(i)$.
\end{proof}

\subsection{Final conclusions}
We finally have:
\begin{corollary}\label{coro:ApproxBetaHatThroughLeaveOnePredictorOut}
Assuming that $1/c_n=\gO(\polyLog(n))$, we have 
$$
\norm{\betaHat-\bTilde}\leq \frac{1}{\tau}\gO_{L_k}\left(\frac{\polyLog(n)}{n}\right)
$$	

In particular, 
\begin{align*}
\sqrt{n}(\betaHat_p-\ansatzBetap)&=\gO_{L_k}(\polyLog(n)/\sqrt{n})\;,\\
\sup_i |X_i\trsp (\betaHat-\bTilde)|&=\gO_{L_k}\left(\frac{\polyLog(n)}{\sqrt{n}}\right)\;,\\
\sup_{i}|R_i-r_{i,[p]}|&=\gO_{L_k}\left(\frac{\polyLog(n)}{\sqrt{n}}\right)\;.
\end{align*}

\end{corollary}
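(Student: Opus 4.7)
\textbf{Proof proposal for Corollary \ref{coro:ApproxBetaHatThroughLeaveOnePredictorOut}.}
The plan is to assemble the four statements from the deterministic bound in Equation \eqref{eq:boundfBtilde} together with the probabilistic estimates already derived in the preceding subsection. First, applying Lemma \ref{lemma:ControlApproxBetaHat} with $\beta_1=\bTilde$ yields $\norm{\betaHat-\bTilde}\leq \tau^{-1}\norm{f(\bTilde)}$, so everything reduces to controlling $\norm{f(\bTilde)}$ via Equation \eqref{eq:boundfBtilde}:
$$
\norm{f(\bTilde)}\leq |\ansatzBetap|\,\sup_{1\leq i\leq n}|\mathsf{d}_{i,p}|\,\opnorm{\SigmaHat}\,\sqrt{\norm{(\mathfrak{S}_p+\tau\id)^{-1}u_p}^2+1}\;.
$$

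Now I would combine the four previously-established $L_k$-controls using H\"older's inequality (each factor being taken in $L_{4k}$, which is permitted because the moment assumptions yield uniform polylog bounds at every order): $|\ansatzBetap|=\gO_{L_k}(\polyLog(n)/\sqrt{n})$ from the proposition on $\ansatzBetap$; $\sup_i|\mathsf{d}_{i,p}|=\gO_{L_k}(\polyLog(n)/\sqrt{n})$ from Proposition \ref{prop:controldip}; $\opnorm{\SigmaHat}=\gO_{L_k}(\polyLog(n))$ under $1/c_n=\gO(\polyLog(n))$ via Lemma \ref{lemma:controlSpectralNorms}; and $\norm{(\mathfrak{S}_p+\tau\id)^{-1}u_p}^2=\gO_{L_k}(\polyLog(n))$ from Equation \eqref{eq:boundgothicSPlusTauIdInverse} combined with the tail bounds on $r_{i,[p]}$. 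Multiplying these gives $\norm{f(\bTilde)}=\gO_{L_k}(\polyLog(n)/n)$, whence the first statement.

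For the remaining three consequences, the idea is to read off each quantity from $\norm{\betaHat-\bTilde}$. The last coordinate of $\bTilde$ is exactly $\ansatzBetap$ by construction in Equation \eqref{eq:AnsatzBetaHatFromGammaHat}, so $|\betaHat_p-\ansatzBetap|\leq\norm{\betaHat-\bTilde}$, giving the second statement upon multiplying by $\sqrt{n}$. For the third statement, I would use $|X_i\trsp(\betaHat-\bTilde)|\leq \norm{X_i}\,\norm{\betaHat-\bTilde}$ together with $\sup_i\norm{X_i}/\sqrt{n}=\gO_{L_k}(1)$, which follows from the concentration assumption \textbf{O4} applied to the Lipschitz map $X\mapsto\norm{X}/\sqrt{n}$. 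For the fourth statement, observe that $R_i-r_{i,[p]}=X_i\trsp(\gammaHat_{ext}-\betaHat)$; splitting as $X_i\trsp(\gammaHat_{ext}-\bTilde)+X_i\trsp(\bTilde-\betaHat)$ and using $\gammaHat_{ext}-\bTilde=\ansatzBetap\bigl[\,(\mathfrak{S}_p+\tau\id)^{-1}u_p\,;\,-1\,\bigr]\trsp$ so that $X_i\trsp(\gammaHat_{ext}-\bTilde)=\ansatzBetap\pi_i$, the first piece is $\gO_{L_k}(\polyLog(n)/\sqrt{n})$ by the $\pi_i$-bounds established inside the proof of Proposition \ref{prop:controldip}, and the second piece is controlled by the third statement.

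The only real obstacle I anticipate is bookkeeping: verifying that the polylog exponents we accumulate from four simultaneous $L_{4k}$ H\"older factors are indeed finite at every order $k$ we need. This boils down to confirming that each of the four factors has all moments bounded by a power of $\log n$, which has been done in the earlier lemmas under Assumptions \textbf{O1--O7}, \textbf{P1}, \textbf{F2} (the crucial ingredients being the polylog tail of ${\mathcal E}_n$ in \textbf{O7}, the polynomial growth of $\psi$ and $L$ in \textbf{O3}, and the assumption $1/c_n=\gO(\polyLog(n))$). Once those moment conditions are in hand, the rest is a mechanical combination.
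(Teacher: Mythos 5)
Your proposal is correct and follows essentially the same route as the paper: the first bound is exactly the aggregation of Equation \eqref{eq:boundfBtilde} with the previously established $L_k$-controls on $|\ansatzBetap|$, $\sup_i|\mathsf{d}_{i,p}|$, $\opnorm{\SigmaHat}$ and $\norm{(\mathfrak{S}_p+\tau\id)^{-1}u_p}$, and your treatment of the residuals via the decomposition $R_i-r_{i,[p]}=X_i\trsp(\bTilde-\betaHat)+\nu_i$ with $\nu_i=\ansatzBetap\pi_i$ is precisely the argument the paper gives after the corollary. No gaps.
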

The corollary is just the aggregation of all of our results. 

The last statement is the only one that might need an explanation. With the notations of the proof of Proposition \ref{prop:controldip}, we have $R_i-r_{i,[p]}=X_i\trsp (\bTilde-\betaHat)+\nu_i$. The results in the proof of Proposition \ref{prop:controldip} as well as the bound on $\norm{\bTilde-\betaHat}$ give us the announced result. 

We note that when the vectors $X_i$'s are i.i.d with i.i.d entries, all the coordinates play a symmetric role, so using the results of the previous corollary, Equation \eqref{eq:SecondMomentAnsatzBetap} and summing over all the coordinates, we have, asymptotically,  
\begin{equation}\label{eq:firstStepTowardsSecondEquationOfSystem}
\frac{p}{n}\Exp{\norm{\betaHat}^2}=\frac{p^2}{n} \Exp{\ansatzBetap^2}+\lo(1)=\frac{1}{n}\sum_{i=1}^n \Exp{(\mathsf{c}_{\tau,p} \psi(r_{i,[p]})^2}+\lo(1)\;.
\end{equation}

\subsubsection{On $\mathsf{c}_{\tau,p}$ and $c_\tau$}
\begin{proposition}\label{prop:controldeltacctaup}
We have 
$$
|c_\tau-\mathsf{c}_{\tau,p}|=\gO_{L_k}(n^{-1/2}\polyLog(n))\;.
$$
\end{proposition}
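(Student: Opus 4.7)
The quantities $c_\tau$ and $\mathsf{c}_{\tau,p}$ differ in two respects: the residuals ($R_i$ versus $r_{i,[p]}$) and the dimension of the underlying matrix ($p$ versus $p-1$). My plan is to interpolate by the auxiliary quantity
\[
c_\tau^\star=\frac{1}{n}\trace{(S^\star+\tau\id_p)^{-1}}, \qquad S^\star=\frac{1}{n}\sum_{i=1}^n\psi'(r_{i,[p]})X_iX_i\trsp,
\]
which uses $r_{i,[p]}$ but stays in dimension $p$, and then to bound $|c_\tau-c_\tau^\star|$ and $|c_\tau^\star-\mathsf{c}_{\tau,p}|$ separately.

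For the first gap, I would apply the resolvent identity $(S+\tau\id)^{-1}-(S^\star+\tau\id)^{-1}=(S+\tau\id)^{-1}(S^\star-S)(S^\star+\tau\id)^{-1}$, which gives $\opnorm{(S+\tau\id)^{-1}-(S^\star+\tau\id)^{-1}}\leq\tau^{-2}\opnorm{S-S^\star}$. Writing $S-S^\star=\frac{1}{n}X\trsp D X$ with $D$ the diagonal of signed coefficients $\psi'(R_i)-\psi'(r_{i,[p]})$, the quadratic-form argument gives $\opnorm{S-S^\star}\leq\opnorm{\SigmaHat}\sup_i|\psi'(R_i)-\psi'(r_{i,[p]})|$. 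Corollary \ref{coro:ApproxBetaHatThroughLeaveOnePredictorOut} bounds $\sup_i|R_i-r_{i,[p]}|$ by $\polyLog(n)/\sqrt{n}$ in $L_k$; combining this with the local Lipschitz property of $\psi'$ (Assumption \textbf{O3}) together with the $\polyLog(n)$ control on the relevant residuals from Proposition \ref{prop:ControlBni} (applied both to the full model and to the reduced model) upgrades this to $\sup_i|\psi'(R_i)-\psi'(r_{i,[p]})|=\gO_{L_k}(\polyLog(n)/\sqrt{n})$. Since $\opnorm{\SigmaHat}=\gO_{L_k}(\polyLog(n))$ and taking the normalized trace costs at most a factor $p/n=\gO(1)$, I obtain $|c_\tau-c_\tau^\star|=\gO_{L_k}(\polyLog(n)/\sqrt{n})$.

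For the second gap, I would exploit the block structure of $S^\star$: its upper-left $(p-1)\times(p-1)$ block is exactly $\mathfrak{S}_p$, its off-diagonal is $u_p$, and its bottom-right entry is $\alpha=\frac{1}{n}\sum_i\psi'(r_{i,[p]})X_i^2(p)$. The Schur-complement inversion formula, together with the definition \eqref{eq:definitionXin} of $\xi_n$, produces
\[
\trace{(S^\star+\tau\id_p)^{-1}}=\trace{(\mathfrak{S}_p+\tau\id_{p-1})^{-1}}+\frac{1+u_p\trsp(\mathfrak{S}_p+\tau\id)^{-2}u_p}{\tau+\xi_n}.
\]
Since $\xi_n\geq 0$, $u_p\trsp(\mathfrak{S}_p+\tau\id)^{-2}u_p=\norm{(\mathfrak{S}_p+\tau\id)^{-1}u_p}^2$ is controlled by Equation \eqref{eq:boundgothicSPlusTauIdInverse} and hence is $\gO_{L_k}(\polyLog(n))$, so dividing by $n$ yields $|c_\tau^\star-\mathsf{c}_{\tau,p}|=\gO_{L_k}(\polyLog(n)/n)$, which is negligible compared to the first gap.

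The triangle inequality then delivers the claimed $\gO_{L_k}(n^{-1/2}\polyLog(n))$ bound. The main obstacle is Step 1: one must ensure that the random local-Lipschitz constant $L(\max(|R_i|,|r_{i,[p]}|))$ for $\psi'$ has $\polyLog(n)$-size moments of sufficiently high order to survive a H\"older estimate. This is where the polynomial growth hypothesis on $L$ in Assumption \textbf{O3} interacts with the $\polyLog$ bounds on the residuals from Proposition \ref{prop:ControlBni}. Once that bookkeeping is in place, everything else is mechanical.
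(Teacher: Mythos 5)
Your proposal is correct and follows essentially the same route as the paper: both decompose the difference into a dimension-reduction step (handled by the Schur-complement trace formula of Lemma \ref{lemma:impactSizeAugmentationOnTrace}, which with the definition of $\xi_n$ gives exactly your $(1+\norm{(\mathfrak{S}_p+\tau\id)^{-1}u_p}^2)/(\tau+\xi_n)$ correction of order $\polyLog(n)/n$) and a residual-swap step (resolvent identity plus $\sup_i|\psi'(R_i)-\psi'(r_{i,[p]})|=\gO_{L_k}(\polyLog(n)/\sqrt{n})$ from Corollary \ref{coro:ApproxBetaHatThroughLeaveOnePredictorOut} and the arguments of Lemma \ref{lemma:ControlEtais}). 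The only cosmetic difference is that you interpolate through the $p\times p$ matrix built from the $r_{i,[p]}$'s, whereas the paper interpolates through the $(p-1)\times(p-1)$ matrix built from the $R_i$'s, i.e.\ the two steps are performed in the opposite order.
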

\begin{proof}
Let us recall the notation 
$$
S=\frac{1}{n}\sum_{i=1}^n \psi'(R_i) X_i X_i\trsp\;.
$$
If we call $\Gamma=\frac{1}{n}\sum_{i=1}^n \psi'(R_i) V_i V_i\trsp$ and $a=\frac{1}{n}\sum_{i=1}^n\psi'(R_i)X_i^2(p)$, we see that 
$$
S=\begin{pmatrix}
\Gamma & \mathsf{v}\\
\mathsf{v}& a\;.
\end{pmatrix}\;.
$$
According to Lemma \ref{lemma:impactSizeAugmentationOnTrace}, we see that 
$$
|c_\tau-\frac{1}{n}\trace{(\Gamma+\tau \id)^{-1}}|\leq \frac{1}{n} \frac{1+a/\tau}{\tau}\;. 
$$
It is clear that under our assumptions, $a=\gO_{L_k}(\polyLog(n))$. It is also clear that 
$$
\sup_i |\psi'(R_i)-\psi'(r_{i,[p]})|=\gO_{L_k}(\polyLog(n)/\sqrt{n})\;.
$$
Hence, using arguments similar to the ones we have used in the proof of Lemma \ref{lemma:ControlEtais}, we see that 
$$
\left|\frac{1}{n}\trace{(\Gamma+\tau \id)^{-1}}-\frac{1}{n}\trace{(\mathfrak{S}_p+\tau \id)^{-1}}\right|=\gO_{L_k}(\polyLog(n)/\sqrt{n})\;.
$$
Since $\mathsf{c}_{\tau,p}=\frac{1}{n}\trace{(\mathfrak{S}_p+\tau \id)^{-1}}$, the result we announced follows immediately.
\end{proof}

In light of this result, we see that Equation \eqref{eq:firstStepTowardsSecondEquationOfSystem} can be re-written 
$$
\frac{p}{n}\Exp{\norm{\betaHat}^2}=\frac{1}{n}\sum_{i=1}^n \Exp{(c_\tau\psi(R_i))^2}+\lo(1)=\frac{1}{n}\sum_{i=1}^n \Exp{(c_i\psi(\prox_{c_i}(\rho)(\rTilde_{i,(i)})))^2}+\lo(1)\;,
$$
where we have used the remark we made after Lemma \ref{lemma:ControlKn} that showed that $\sup_i |c_i-c_\tau|=\gO_{L_k}(n^{-1/2}\polyLog(n))$. (See also Lemma \ref{lemma:DiffProxRespectc} and its proof where we compute the derivative of $\prox_c(\rho)(x)$ with respect to $c$.)

So we finally have 
\begin{equation}\label{eq:secondStepTowardsSecondEquationOfSystem}
\frac{p}{n}\Exp{\norm{\betaHat}^2}=\frac{1}{n}\sum_{i=1}^n \Exp{(c_\tau\psi(\prox_{c_\tau}(\rho)(\rTilde_{i,(i)})))^2}+\lo(1)\;.
\end{equation}
This will give us the second equation of our system. We also note that for any $x$, $c_\tau\psi[\prox_{c_\tau}(\rho)(x)]=x-\prox_{c_\tau}(\rho)(x)=\prox_1((c_\tau\rho)^*)(x)$ - see e.g \cite{MoreauProxPaper65}. In \cite{NEKOptimalMEstimationPNASPublished2013}, we found that this formulation was nicer when further analytic manipulations where needed.

\section{Putting things together}
\subsection{On the asymptotic distribution of $\rTilde_{i,(i)}$}
We have the following lemma.
\begin{lemma}
As $n$ and $p$ tend to infinity, $\rTilde_{i,(i)}$ behaves like $\eps_i+\sqrt{\Exp{\norm{\betaHat}^2}}Z$, where $Z\sim {\cal N}(0,1)$, in the sense of weak convergence. 

Furthermore, if $i\neq j$, $\rTilde_{i,(i)}$ and $\rTilde_{j,(j)}$ are asymptotically independent. 
\end{lemma}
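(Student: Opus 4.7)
The key observation is that $\betaHat_{(i)}$ is independent of the pair $(X_i,\eps_i)$, so that, \emph{conditionally} on $\betaHat_{(i)}$, the quantity
$$
\rTilde_{i,(i)} = \eps_i - X_i\trsp \betaHat_{(i)}
$$
decomposes as (independent error) $-$ (weighted sum of i.i.d.\ entries of $X_i$ with deterministic weights $\betaHat_{(i)}(k)$). The first step is therefore to condition on $\betaHat_{(i)}$ and apply the Lindeberg CLT to
$$
X_i\trsp \betaHat_{(i)} = \sum_{k=1}^p X_i(k)\, \betaHat_{(i)}(k)
$$
using assumption \textbf{P1} (i.i.d.\ entries, mean zero, variance one). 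Conditional on $\betaHat_{(i)}$, this sum has variance $\norm{\betaHat_{(i)}}^2$, and the conditional Lindeberg condition reduces to showing $\max_k \betaHat_{(i)}^2(k)/\norm{\betaHat_{(i)}}^2 \to 0$ in probability.

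The next step is to verify that smallness of the coordinates. By Corollary~\ref{coro:ApproxBetaHatThroughLeaveOnePredictorOut}, $\sqrt{n}(\betaHat_p - \ansatzBetap) = \gO_{L_k}(\polyLog(n)/\sqrt{n})$, and we already have $\ansatzBetap = \gO_{L_k}(\polyLog(n)/\sqrt{n})$. Crucially, under \textbf{P1} all coordinates play a symmetric role, so the same bound applies to every $\betaHat(k)$, and hence $\Exp{\max_k \betaHat^2(k)} \lesssim \polyLog(n)/n$ by a union bound in $L_k$ for large $k$. Combined with $\norm{\betaHat}^2 \to \Exp{\norm{\betaHat}^2} > 0$ in probability (from $\var{\norm{\betaHat}^2} \to 0$), this gives the Lindeberg condition. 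The first-observation leave-one-out bounds transfer these properties from $\betaHat$ to $\betaHat_{(i)}$ at cost $\polyLog(n)/n$. The conditional CLT then yields that $X_i\trsp \betaHat_{(i)} / \norm{\betaHat_{(i)}}$ converges in distribution to $\mathcal{N}(0,1)$, and since $\norm{\betaHat_{(i)}} \to \sqrt{\Exp{\norm{\betaHat}^2}}$ in probability while $\eps_i$ is independent of $X_i$ and $\betaHat_{(i)}$, Slutsky's theorem gives the claimed weak limit.

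For the asymptotic independence of $\rTilde_{i,(i)}$ and $\rTilde_{j,(j)}$ ($i\neq j$), I would introduce the double-leave-out estimator $\betaHat_{(i,j)}$ (solving the ridge problem without observations $i$ and $j$) and apply the leave-one-observation-out analysis \emph{twice}: first to bound $\norm{\betaHat_{(i)} - \betaHat_{(i,j)}}$ and $\norm{\betaHat_{(j)} - \betaHat_{(i,j)}}$ by $\gO_{L_k}(\polyLog(n)/n)$ via Corollary~\ref{coro:AggregResApproxBetaHatByBetaTildeIncl} adapted to these settings. Since $\sup_i \norm{X_i}/\sqrt n = \gO_{L_k}(1)$, this propagates to
$$
\bigl|\rTilde_{i,(i)} - (\eps_i - X_i\trsp \betaHat_{(i,j)})\bigr| + \bigl|\rTilde_{j,(j)} - (\eps_j - X_j\trsp \betaHat_{(i,j)})\bigr| = \gO_{L_k}\!\left(\tfrac{\polyLog(n)}{\sqrt n}\right)\;.
$$
Now conditional on $\betaHat_{(i,j)}$, the four objects $X_i, X_j, \eps_i, \eps_j$ are mutually independent, so the approximating pair is a pair of conditionally independent Gaussian-like random variables (by the same Lindeberg CLT argument applied coordinate-wise to each of $X_i, X_j$), with the \emph{same} conditioning variable $\norm{\betaHat_{(i,j)}}^2$, which itself concentrates on $\Exp{\norm{\betaHat}^2}$. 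Joint weak convergence to a product measure follows.

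\textbf{Main obstacle.} The delicate point is establishing the Lindeberg condition on the coordinates of $\betaHat_{(i)}$ in the strong sense needed (i.e., uniformly over $k$ with high probability, not merely marginally): the leave-one-predictor-out machinery only directly controls a \emph{single} coordinate $\betaHat_p$, and one has to invoke symmetry under \textbf{P1} plus a union bound with tail control in $L_k$ for $k$ large, using the $\polyLog(n)$ slack to survive the union. A secondary technical point is coupling the $L_2$ concentration of $\norm{\betaHat}^2$ to that of $\norm{\betaHat_{(i)}}^2$ and $\norm{\betaHat_{(i,j)}}^2$, which is routine once $\norm{\betaHat - \betaTilde_i}$ and the analogous double-leave-out bound are in hand.
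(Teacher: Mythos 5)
Your proposal is correct and follows essentially the same route as the paper: condition on the leave-one-out estimator, apply a Lindeberg--Feller CLT to $X_i\trsp\betaHat_{(i)}$ using the single-coordinate bound plus coordinate symmetry under \textbf{P1}, conclude by Slutsky, and handle asymptotic independence via the double-leave-out estimator $\betaHat_{(ij)}$ together with the observation that the only residual dependence is through $\norm{\betaHat_{(ij)}}$, which is asymptotically deterministic. The one minor difference is that the paper verifies the CLT hypothesis through a Lyapunov condition, bounding $\sum_k\Exp{|(\betaHat_{(i)})_k|^3}=\gO(n^{-1/2})$ by symmetry and linearity of expectation, which avoids the union bound over $\max_k\betaHat_{(i)}^2(k)$ that your version requires.
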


\begin{proof}
The only problem is of course showing that 
$
\betaHat_{(i)}\trsp X_i
$
is approximately ${\cal N}(0,\Exp{\norm{\betaHat}^2})$. 
Recall that $\betaHat_{(i)}$ is independent of $X_i$. We assume without loss of generality that $\norm{\betaHat_{(i)}}$ remains bounded away from 0 in our asymptotics. Note that if it is not the case $\Exp{(\betaHat_{(i)}\trsp X_i)^2}=\Exp{\norm{\betaHat_{(i)}}^2}\tendsto 0$ and so $\betaHat_{(i)}\trsp X_i \WeakCv 0$, so the result holds.

Because $\var{\norm{\betaHat}^2}\tendsto 0$ and $\var{\norm{\betaHat_{(i)}}^2}\tendsto 0$, we see that 
$$
\frac{\norm{\betaHat_{(i)}}}{\Exp{\norm{\betaHat_{(i)}}}}\tendsto 1 \text{in probability}.
$$
Provided that we can apply the Lindeberg-Feller theorem (see e.g \cite{BreimanProbaBook}, p.186) conditional on a realization $X_{(i)}$, we will have 
$$
\frac{\betaHat_{(i)}\trsp X_i}{\norm{\betaHat_{(i)}}}|X_{(i)}\WeakCv {\cal N}(0,1)\;.
$$
Because the limit is independent of $\norm{\betaHat_{(i)}}$, we see that the result holds unconditionally, if we can apply the Lindeberg-Feller theorem with high $X_{(i)}$-probability.

And because $\norm{\betaHat_{(i)}}^2/\Exp{\norm{\betaHat}^2}\tendsto 1$ in probability, Slutsky's lemma allows us to conclude that under these assumptions we have 
$$
\frac{\betaHat_{(i)}\trsp X_i}{\sqrt{\Exp{\norm{\betaHat}^2}}}\WeakCv {\cal N}(0,1)
$$

The only question we have to check is therefore to verify that we can apply the Lindeberg-Feller theorem conditionally on $X_{(i)}$, at least with high $X_{(i)}$-probability. Recall that we have shown that 
$$
\betaHat_p=\gO_{L_k}(\frac{1}{\sqrt{n}\tau})\;.
$$ 
The same arguments we used apply also to $(\betaHat_{(i)})_p$. So it is clear that 
$$
\Exp{|(\betaHat_{(i)})_p|^3}=\gO(n^{-3/2})\;.
$$
We conclude that $\sum_{k=1}^n|(\betaHat_{(i)})_k|^3=\gO_{L_k, X_{(i)}}(n^{-1/2})\ll \norm{\betaHat_{(i)}}^2$ with high $X_{(i)}-$probability, since we are in the setting where $\norm{\betaHat_{(i)}}^2$ is bounded away from 0.

This shows the first part of the lemma.

For the second part, we use a leave-two-out approach, namely we use the approximation $\rTilde_{i,(i)}=\eps_i-\betaHat_{i}\trsp X_i= \eps_i-\betaHat_{(ij)}\trsp X_i+\gO_{L_k}(\polyLog(n)/(\sqrt{n}c_n))$ and similarly for $\rTilde_{j,(j)}$. It is clear that $\rTilde_{i,(i)}$ and $\rTilde_{j,(j)}$ are asymptotically independent conditional on $X_{(ij)}$. But because their dependence on $X_{(ij)}$ is only through $\norm{\betaHat_{(ij)}}$, which is asymptotically deterministic, we see that $\rTilde_{i,(i)}$ and $\rTilde_{j,(j)}$ are asymptotically independent. 

The lemma is shown.
\end{proof}
We are now in position to show that $c_\tau=\frac{1}{n}\trace{(S+\tau \id_p)^{-1}}$ is asymptotically deterministic and that the empirical distribution of the residuals $R_i$ is asymptotically non-random.

\begin{lemma}
Consider the random function 
$$
g_n(x)=\frac{1}{n}\sum_{i=1}^n \frac{1}{1+x\psi'(\prox_x(\rho)(\rTilde_{i,(i)}))}\;,
$$
defined for $x\geq 0$.
Let $B>0$ be in $\mathbb{R}_+$.
Call $F_{\rho,B}(u)=\left([\psi'(0)+L(|u|)|u|]+BL(|u|)[|\psi(u)|+|\psi(-u)|]\right)$, where $L(|u|)$ is the Lipschitz constant of $\psi'$ on $[-|u|,|u|]$.
We have, for any $(x,y)\in \mathbb{R}_+^2$, and $x\leq B$, $y\leq B$
$$
\sup_{(x,y):|x-y|\leq \eta, x\leq B, y\leq B}\left|g_n(x))-g_n(y)\right|\leq \eta \frac{1}{n}\sum_{i=1}^n F_{\rho,B}(\rTilde_{i,(i)})\;.
$$
In particular,  we have 
$$
P^*\left(\sup_{(x,y):|x-y|\leq \eta, x\leq B, y\leq B}\left|g_n(x))-g_n(y)\right|>\delta\right)\leq \frac{\eta}{\delta}
\frac{1}{n}\sum_{i=1}^n \Exp{F_{\rho,B}(\rTilde_{i,(i)})}\;.
$$
Hence, $g_n$ is stochastically equicontinuous on $[0,B]$ for any $B>0$ given, since under our assumptions $\Exp{F_{\rho,B}(\rTilde_{i,(i)})}$ is uniformly bounded in $n$,
\end{lemma}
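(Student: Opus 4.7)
My plan is to establish the inequality pointwise in $i$ by showing that, for each fixed $u$, the map $x \mapsto \phi(x;u) := (1 + x\psi'(\prox_x(\rho)(u)))^{-1}$ is Lipschitz on $[0,B]$ with constant $F_{\rho,B}(u)$. The probability statement will then follow immediately from Markov's inequality applied to the averaged Lipschitz constant, and stochastic equicontinuity from the uniform $L_1$-boundedness of $F_{\rho,B}(\rTilde_{i,(i)})$ that is already in hand under the paper's assumptions.

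The reduction step is easy: since $x\psi'(\prox_x(\rho)(u)) \geq 0$ (because $x \geq 0$ and $\psi' \geq 0$ by O2), the denominators in $\phi(x;u)$ and $\phi(y;u)$ are both $\geq 1$, so
\[
|\phi(x;u)-\phi(y;u)| \leq |x\psi'(q_x) - y\psi'(q_y)|, \qquad q_t := \prox_t(\rho)(u).
\]
Splitting this as $|x-y|\,|\psi'(q_x)| + y\,|\psi'(q_x)-\psi'(q_y)|$, the first term is bounded by $|x-y|(\psi'(0) + L(|u|)|u|)$ using the Lipschitz property O3 of $\psi'$ on $(-|u|,|u|)$, together with the crucial contractivity fact that $\prox$ is $1$-Lipschitz and fixes $0$ (since $\psi(0)=0$ under O2), which gives $|q_x|\leq |u|$.

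The heart of the argument is bounding $|q_x - q_y|$. Using the defining identities $u = q_x + x\psi(q_x) = q_y + y\psi(q_y)$ and the mean value theorem applied to $\psi$, there exists $\gamma$ between $q_x$ and $q_y$ with
\[
(1 + x\psi'(\gamma))(q_x - q_y) = (y-x)\psi(q_y).
\]
Since $\psi' \geq 0$, the prefactor is $\geq 1$, yielding $|q_x-q_y| \leq |x-y|\,|\psi(q_y)|$. Monotonicity of $\psi$ (convexity of $\rho$) combined with $|q_y|\leq |u|$ then gives $|\psi(q_y)| \leq |\psi(u)| + |\psi(-u)|$. Applying the Lipschitz property of $\psi'$ once more produces $|\psi'(q_x)-\psi'(q_y)| \leq L(|u|)|x-y|(|\psi(u)|+|\psi(-u)|)$. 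Using $y \leq B$ and assembling everything yields the coefficient $F_{\rho,B}(u) = [\psi'(0)+L(|u|)|u|] + B L(|u|)[|\psi(u)|+|\psi(-u)|]$, as claimed. Averaging over $i$ establishes the first inequality.

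For the probability bound, Markov's inequality applied to the (deterministic Lipschitz) envelope gives the stated $\eta/\delta$ factor. Stochastic equicontinuity on $[0,B]$ then reduces to showing $\sup_n \frac{1}{n}\sum_{i=1}^n \Exp{F_{\rho,B}(\rTilde_{i,(i)})} < \infty$. Under O3 ($|\psi(u)| = \gO(|u|^m)$ and $L(|u|) \leq K|u|^{m_1}$), $F_{\rho,B}$ has polynomial growth of fixed degree, so the required bound follows from the $L_k$-control of $\sup_i |\rTilde_{i,(i)}|$ proved in Lemma \ref{lemma:BoundRiFromrTildeii}. I expect this last uniform-integrability bookkeeping to be the only place where one has to be careful, the earlier steps being essentially routine manipulations once the contractivity of prox and the mean-value identity for $q_x - q_y$ are isolated.
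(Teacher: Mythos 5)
Your proposal is correct and follows essentially the same route as the paper: reduce to the numerator via the denominators being $\geq 1$, split into an $|x-y|\psi'(q_x)$ term bounded by $\psi'(0)+L(|u|)|u|$ and a $y|\psi'(q_x)-\psi'(q_y)|$ term controlled through the Lipschitz constant of $x\mapsto \prox_x(\rho)(u)$, then Markov and uniform $L_1$-boundedness of $F_{\rho,B}(\rTilde_{i,(i)})$. The only (cosmetic) difference is that you obtain $|q_x-q_y|\leq |x-y|\,|\psi(q_y)|$ by applying the mean value theorem directly to the defining identity $u=q_t+t\psi(q_t)$, whereas the paper differentiates $\prox_x(\rho)(u)$ in $x$ (its Lemma \ref{lemma:DiffProxRespectc}) and bounds the derivative by $|\psi(u)|\vee|\psi(-u)|$ — the same computation in integrated versus infinitesimal form.
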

We used the notation $P^*$ above to denote our probability and avoid a discussion of potential measure theoretic issues associated with taking a supremum over a non-countable collection of random variables. We refer the reader to e.g \cite{PollardConvergenceStochProcesses84} for more details on stochastic equicontinuity. 
\begin{proof}
Let us consider the function
$$
h_u(x)=\frac{1}{1+x\psi'(\prox_x(\rho)(u))}=\frac{\partial}{\partial u}\prox_x(\rho)(u)\;.
$$
The last equality comes from Lemma \ref{lemma:simpleObsProx}.

We have 
$$
\left|h_u(x)-h_u(y)\right|\leq |x\psi'(\prox_x(\rho)(u))-y\psi'(\prox_y(\rho)(u))|\wedge 1\;.
$$
Therefore, 
$$
\left|h_u(x)-h_u(y)\right|\leq |x-y|\psi'(\prox_x(\rho)(u))+y|\psi'(\prox_x(\rho)(u))-\psi'(\prox_y(\rho)(u))|\;.
$$
In particular, if $|x-y|\leq \eta$, and $x \vee y \leq B$
$$
\sup_{y:|x-y|\leq \eta}\left|h_u(x)-h_u(y)\right|\leq \eta \psi'(\prox_x(\rho)(u))+(x+\eta)\sup_{y}|\psi'(\prox_x(\rho)(u))-\psi'(\prox_y(\rho)(u))|\;.
$$
Note

Under our assumptions, Lemma \ref{lemma:ValueProxat0} implies that, for $y\geq 0$, $\sup_y|\prox_y(\rho)(u)|\leq |u|$. One of our assumptions is that $\psi'$ is Lipschitz on any $[-t,t]$ with Lipshitz constant $L(t)$. Therefore, 
$$
|\psi'(\prox_x(\rho)(u))-\psi'(\prox_y(\rho)(u))|\leq L(|u|) |\prox_x(\rho)(u)-\prox_y(\rho)(u)|\;.
$$
We recall that, according to Lemma \ref{lemma:DiffProxRespectc}, 
$$
\frac{\partial}{\partial x} \prox_x(\rho)(u)=-\frac{\psi(\prox_x(\rho)(u))}{1+c\psi'(\prox_x(\rho)(u))}\;.
$$
Furthermore, since $\psi$ is non-decreasing and changes sign at 0, we also have 
$$
\sup_x |\frac{\partial}{\partial x} \prox_x(\rho)(u)|\leq |\psi(u)|\vee |\psi(-u)|\;.
$$
This naturally gives us a bound on the Lipschitz constant of the function $x\rightarrow \prox_x(\rho)(u)$. We finally conclude that 
$$
|\psi'(\prox_x(\rho)(u))-\psi'(\prox_y(\rho)(u))|\leq L(|u|)[|\psi(u)|\vee |\psi(-u)|]|x-y|\;.
$$  
We therefore have 
$$
\sup_{y:|x-y|\leq \eta}\left|h_u(x)-h_u(y)\right|\leq \eta \psi'(\prox_x(\rho)(u)) +(x+\eta)L(|u|)[|\psi(u)|\vee |\psi(-u)|]\eta\;.
$$
Of course, $\psi'(\prox_x(\rho)(u))\leq \psi'(0)+L(|u|)|u|$, by using again the fact that $|\prox_x(\rho)(u)|\leq |u|$, the fact that $\prox_x(\rho)(0)=0$ and the fact that the Lipschitz constant of $\psi'$ on $[-|\prox_x(\rho)(u)|,|\prox_x(\rho)(u)|]$ is less than $L(u)$. 

Therefore, if $x+\eta\leq B$, we have 
$$
\sup_{y:|x-y|\leq \eta}\left|h_u(x)-h_u(y)\right|\leq \eta\left([\psi'(0)+L(|u|)|u|]+BL(|u|)[|\psi(u)|+|\psi(-u)|]\right)\;.
$$
Therefore, we also have 
$$
\sup_{(x,y):|x-y|\leq \eta, x\vee y\leq B}\left|h_u(x)-h_u(y)\right|\leq \eta\left([\psi'(0)+L(|u|)|u|]+BL(|u|)[|\psi(u)|+|\psi(-u)|]\right)\;.
$$
We denote by $F_{\rho,B}(u)=\left([\psi'(0)+L(|u|)|u|]+BL(|u|)[|\psi(u)|+|\psi(-u)|]\right)$\;.
This analysis shows that for $x$ given, if $|x-y|\leq \eta$ and $x\vee y \leq B$, we have 
$$
\sup_{(x,y):|x-y|\leq \eta, x\leq B, y\leq B}\left|g_n(x))-g_n(y)\right|\leq \eta \frac{1}{n}\sum_{i=1}^n F_{\rho,B}(\rTilde_{i,(i)})\;.
$$
We can now take expectations, and get the result in $L_1$ provided $\Exp{F_{\rho,B}(\rTilde_{i,(i)})}$ is finite and remains bounded in $n$. However, this holds since $F_{\rho,B}$ grows at most polynomially at $\infty$, and $\eps_i$, $\norm{\betaHat_{(i)}}$ and $X_i$ have infinitely many moments, by Assumptions \textbf{O4}, \textbf{F2} and our work on $\norm{\betaHat}$.

We have established stochastic equicontinuity of $g_n(x)$ on $[0,B]$. 
\end{proof}

\begin{lemma}\label{lemma:ControlSupDeltagnMeanGn}
Let us call $G_n(x)=\Exp{g_n(x)}$.
For any given $x_0\leq B$,
$$
g_n(x_0)-G_n(x_0)=\lo_{L_2}(1)\;.
$$ 

Under our assumptions, we also have 
$$
\outerExp{\sup_{0\leq x\leq B}|g_n(x)-G_n(x)|}\tendsto 0\;.
$$	
\end{lemma}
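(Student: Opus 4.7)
\bigskip
\noindent\textit{Proof proposal.} The plan is to prove the pointwise $L_2$ statement first by a direct covariance computation, and then upgrade to uniform convergence in outer expectation by combining with the stochastic equicontinuity already established.

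For the pointwise statement at a fixed $x_0 \leq B$, write $h_i = h_{\rTilde_{i,(i)}}(x_0)$, so $g_n(x_0) = n^{-1} \sum_i h_i$ and $|h_i| \leq 1$. Then
\[
\var{g_n(x_0)} = \frac{1}{n^2}\sum_{i=1}^n \var{h_i} + \frac{1}{n^2}\sum_{i\neq j}\cov{h_i,h_j}.
\]
The diagonal contributes $\gO(1/n)$ by the bound $|h_i|\leq 1$. For the off-diagonal pairs I will use a leave-two-out reduction: let $\betaHat_{(ij)}$ denote the estimator built without observations $i$ and $j$, and replace $\rTilde_{i,(i)}$ by $\check r_i = \eps_i - X_i\trsp \betaHat_{(ij)}$ (and likewise $\check r_j$). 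By Corollary~\ref{coro:AggregResApproxBetaHatByBetaTildeIncl} applied to the problem without one observation, $|\rTilde_{i,(i)} - \check r_i| = \gO_{L_k}(\polyLog(n)/\sqrt n)$, and $h_u(x_0)$ is $\polyLog(n)$-Lipschitz on the (high probability) range of $\rTilde_{i,(i)}$ by the same Lipschitz computation that appeared in the preceding lemma, so $h_i$ differs from $\check h_i := h_{\check r_i}(x_0)$ by $\gO_{L_2}(\polyLog(n)/\sqrt n) = \lo_{L_2}(1)$.

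Conditional on $\betaHat_{(ij)}$ (which is independent of $(X_i,\eps_i,X_j,\eps_j)$), the random variables $\check h_i$ and $\check h_j$ are independent, so $\cov{\check h_i,\check h_j} = \cov{\mathbf{E}[\check h_i|\betaHat_{(ij)}], \mathbf{E}[\check h_j|\betaHat_{(ij)}]}$. By the central limit argument used in the previous subsection, $X_i\trsp\betaHat_{(ij)} / \norm{\betaHat_{(ij)}} \WeakCv \mathcal{N}(0,1)$ conditionally on $\betaHat_{(ij)}$; combined with $|\check h_i|\leq 1$ and dominated convergence, $\mathbf{E}[\check h_i|\betaHat_{(ij)}]$ equals (up to $\lo_{L_2}(1)$) a fixed function $\Phi(\norm{\betaHat_{(ij)}}^2)$ of the squared norm. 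The same Efron--Stein argument that showed $\var{\norm{\betaHat}^2}\tendsto 0$ applies verbatim to $\norm{\betaHat_{(ij)}}^2$, and $\Phi$ is Lipschitz because $h_u$ is bounded and smooth, so $\cov{\check h_i,\check h_j} = \lo(1)$ uniformly in $i,j$. Adding up, $\var{g_n(x_0)} = \lo(1)$, and since $\mathbf{E}[g_n(x_0)] = G_n(x_0)$, this is the $L_2$ statement.

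For the uniform statement, fix $\eta>0$ and cover $[0,B]$ by points $0 = x_0 < x_1 < \cdots < x_N = B$ with spacing $\leq \eta$ (so $N \leq B/\eta + 1$). Writing
\[
\sup_{0\leq x\leq B} |g_n(x)-G_n(x)| \;\leq\; \max_{0\leq j\leq N} |g_n(x_j)-G_n(x_j)| \;+\; \sup_{|x-y|\leq \eta,\, x,y\leq B}\bigl[|g_n(x)-g_n(y)| + |G_n(x)-G_n(y)|\bigr],
\]
the first term tends to $0$ in probability by the pointwise $L_2$ statement applied at each $x_j$, and the second term is controlled by the stochastic equicontinuity inequality of the previous lemma (since $G_n = \mathbf{E}[g_n]$ inherits the deterministic equicontinuity modulus $\eta \cdot n^{-1}\sum_i \Exp{F_{\rho,B}(\rTilde_{i,(i)})}$). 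Sending $n\tendsto\infty$ then $\eta\tendsto 0$ yields convergence to $0$ of $\sup_{0\leq x\leq B}|g_n(x)-G_n(x)|$ in outer probability. Because $|g_n(x)|,|G_n(x)|\leq 1$ uniformly, the supremum is bounded by $2$, and bounded convergence upgrades convergence in outer probability to convergence in outer expectation $\outerExp{\sup_{0\leq x\leq B}|g_n(x)-G_n(x)|}\tendsto 0$.

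The main obstacle is the quantitative control of $\cov{h_i,h_j}$: one must handle the error from the leave-two-out substitution, the conditional CLT for $X_i\trsp\betaHat_{(ij)}$ (with uniform-in-realization Lindeberg conditions), and the asymptotic concentration of $\norm{\betaHat_{(ij)}}$ all simultaneously. Everything else is routine once these three pieces are in place.
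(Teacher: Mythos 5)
Your proposal is correct and follows essentially the same route as the paper: the pointwise $L_2$ statement comes from $\var{g_n(x_0)}\tendsto 0$ via the asymptotic pairwise independence of the $\rTilde_{i,(i)}$'s (which the paper establishes by exactly the leave-two-out reduction you spell out, conditioning on $\betaHat_{(ij)}$ and using the concentration of $\norm{\betaHat_{(ij)}}^2$), and the uniform statement comes from a finite net on $[0,B]$ combined with the stochastic equicontinuity bound of the preceding lemma. The only cosmetic difference is that the paper bounds $\outerExp{\sup_x|g_n(x)-G_n(x)|}$ directly in $L_1$ using the equicontinuity modulus, whereas you pass through convergence in outer probability and then invoke boundedness of $g_n$; both are fine.
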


\begin{proof}
Asymptotic pairwise independence of $\rTilde_{i,(i)}$ implies that 
$$
\var{g_n(x_0)}\tendsto 0
$$
and therefore gives the first result. 
	
Let us pick $\eps>0$. By the stochastic equicontinuity of $g_n$ and our $L_1$ bound, we can find $x_1,\ldots,x_K$, independent of $n$, such that for all $x \in [0,B]$, there exists $l$ such that, when $n$ is large enough, 
$$
\Exp{|g_n(x)-g_n(x_l)|}\leq \eps\;.
$$
Note that 
$$
|g_n(x)-G_n(x)|\leq |g_n(x)-g_n(x_l)|+|g_n(x_l)-G_n(x_l)|+|G_n(x_l)-G_n(x)|\;.
$$
We immediately get 
$$
\outerExp{\sup_{0\leq x\leq B}|g_n(x)-G_n(x)|}\leq 2 \eps+\Exp{\sup_{1\leq l\leq K}|g_n(x_l)-G_n(x_l)|}\;.
$$
Because $K$ is finite, the fact that for all $l$, $|g_n(x_l)-G_n(x_l)|\tendsto 0$ in $L_2$ implies that $\sup_{1\leq l \leq K}|g_n(x_l)-G_n(x_l)|\tendsto 0$ in $L_2$. In particular, if $n$ is sufficiently large, 
$$
\Exp{\sup_{1\leq l\leq K}|g_n(x_l)-G_n(x_l)|}\leq \eps\;.
$$ 
The lemma is shown.
\end{proof}

\begin{lemma}
Call $c_\tau=\frac{1}{n}\trace{(S+\tau \id_p)^{-1}}$. 
Call as before
$$
g_n(x)=\frac{1}{n}\sum_{i=1}^n \frac{1}{1+x\psi'(\prox_x(\rho)(\rTilde_{i,(i)}))}
$$
Then $c_\tau$ is a near solution of 
$$
\frac{p}{n}-\tau x-1+g_n(x)=0\;,
$$
i.e $\frac{p}{n}-\tau c_\tau-1+g_n(c_\tau)=\lo_{L_k}(1)$.

Asymptotically, near solutions of 
$$
\delta_n(x)\triangleq \frac{p}{n}-\tau x-1+g_n(x)=0\;,
$$
are close to solutions of 
$$
\Delta_n(x)=\frac{p}{n}-\tau x-1+\Exp{g_n}(x)=0\;.
$$
More precisely, call $T_{n,\eps}=\{x:|\Delta_n(x)|<\eps\}$. Note that $T_{n,\eps}\subseteq (0,p/(n\tau)+\eps/\tau)$.
For any given $\eps$, as $n\tendsto \infty$, near solutions of $\delta_n(x_n)=0$ belong to $T_{n,\eps}$ with high-probability.

Our assumptions concerning the distribution of $\eps_i's$, specifically $\textbf{F1}$, guarantee that as $n\tendsto \infty$, there is a unique solution to $\Delta_n(x)=0$. 

Hence $c_\tau$ is asymptotically deterministic. 
\end{lemma}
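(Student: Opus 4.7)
The plan is to derive an exact Sherman-Morrison identity that makes $c_\tau$ an approximate zero of $\delta_n$, use Lemma~\ref{lemma:ControlSupDeltagnMeanGn} to transfer this to the deterministic function $\Delta_n$, and then invoke \textbf{F1} to extract a unique limit. Starting from the matrix identity
$$\frac{p}{n}-\tau c_\tau = \frac{1}{n}\trace{S(S+\tau\id_p)^{-1}} = \frac{1}{n^2}\sum_{i=1}^n \psi'(R_i)\, X_i\trsp (S+\tau\id_p)^{-1}X_i,$$
I would apply Sherman-Morrison-Woodbury with $S_{(i)}=S-\frac{\psi'(R_i)}{n}X_iX_i\trsp$ and set $\widehat{c}_i = \frac{1}{n}X_i\trsp(S_{(i)}+\tau\id_p)^{-1}X_i$. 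A one-line calculation (the analogue of Equation~\eqref{eq:keyEqnInImplicitDefOfCtaup}) then gives the exact identity
$$\frac{p}{n}-\tau c_\tau -1 + \frac{1}{n}\sum_{i=1}^n \frac{1}{1+\psi'(R_i)\widehat{c}_i}=0.$$

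Second, I would replace $\widehat{c}_i$ by $c_\tau$ and $\psi'(R_i)$ by $\psi'(\prox_{c_\tau}(\rho)(\rTilde_{i,(i)}))$ one ingredient at a time. For the $c$'s, $\sup_i|\widehat{c}_i-c_i|=\gO_{L_k}(\polyLog(n)/\sqrt{n})$ by the same Sherman-Morrison comparison used in Lemma~\ref{lemma:ControlEtais}, and $\sup_i|c_i-c_\tau|=\gO_{L_k}(\polyLog(n)/\sqrt{n})$ by the remark after Lemma~\ref{lemma:ControlKn} together with Proposition~\ref{prop:controldeltacctaup}. For the residuals, Corollary~\ref{coro:AggregResApproxBetaHatByBetaTildeIncl} combined with Lemma~\ref{lemma:DiffProxRespectc} gives $\sup_i|R_i-\prox_{c_\tau}(\rho)(\rTilde_{i,(i)})|=\gO_{L_k}(\polyLog(n)/\sqrt{n})$, which propagates through $\psi'$ by the polynomial Lipschitz assumption \textbf{O3}. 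Aggregating via H\"older yields $\delta_n(c_\tau)=\lo_{L_k}(1)$. Since $0\leq c_\tau\leq p/(n\tau)$ deterministically, I may restrict attention to a compact interval $[0,B]$, on which Lemma~\ref{lemma:ControlSupDeltagnMeanGn} supplies $\sup_{[0,B]}|\delta_n(x)-\Delta_n(x)|\tendsto 0$ in outer probability; combined with $\delta_n(c_\tau)=\lo(1)$ this gives $\Delta_n(c_\tau)=\lo(1)$, i.e.\ $c_\tau\in T_{n,\eps}$ with probability tending to one for every fixed $\eps>0$.

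Finally, to upgrade ``approximate root of $\Delta_n$'' to ``converges to a fixed number'', I would combine two inputs. The preceding lemma identifies the weak limit of $\rTilde_{i,(i)}$ as $\eps+rZ$ with $r^2=\lim\Exp{\norm{\betaHat}^2}$, so $\Delta_n$ admits the pointwise limit $\Delta_\infty(x)=\kappa-1-\tau x+\Exp{h_{\eps+rZ}(x)}$. Under \textbf{F1}, the density of $\eps+rZ$ is unimodal, and this is exactly the hypothesis that Lemma~\ref{lemma:suffCondKeyEqnInCHasUniqueSolution} is designed to exploit to force $\Delta_\infty$ to be strictly monotone, hence to have a unique root $x^*$; the near-solution sets $T_{n,\eps}$ then shrink to $\{x^*\}$, giving $c_\tau\tendsto x^*$ in probability. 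The main obstacle is precisely this monotonicity step: $h_u(x)=\partial_u\prox_x(\rho)(u)$ depends on $x$ through both the proximal mapping and its derivative (Lemma~\ref{lemma:DiffProxRespectc}), so one cannot simply differentiate under the expectation to extract a clean sign; it is the Gaussian smoothing present in \textbf{F1} that restores enough log-concavity to guarantee injectivity of the fixed-point equation defining $x^*$. Modulo this structural analysis, the rest is routine Sherman-Morrison bookkeeping plus reuse of the leave-one-out approximation theorems already established.
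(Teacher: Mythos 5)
Your proposal is correct and follows essentially the same route as the paper: an exact Sherman--Morrison identity making $c_\tau$ an exact root of the equation with $\widehat{c}_i$ in place of $c_\tau$ (the direct analogue of Equation \eqref{eq:keyEqnInImplicitDefOfCtaup}, which the paper first writes for $\mathsf{c}_{\tau,p}$ and then transfers verbatim to $c_\tau$), followed by the replacements $\widehat{c}_i\to c_\tau$ and $\psi'(R_i)\to\psi'(\prox_{c_\tau}(\rho)(\rTilde_{i,(i)}))$ via the leave-one-out controls, then uniform convergence $\delta_n\to\Delta_n$ on a compact interval from Lemma \ref{lemma:ControlSupDeltagnMeanGn}, and finally uniqueness of the limiting root from Assumption \textbf{F1} via Lemma \ref{lemma:suffCondKeyEqnInCHasUniqueSolution}. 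The only cosmetic difference is that you derive the exact identity directly in the full $p$-predictor model rather than quoting the $(p-1)$-predictor computation, and your closing remark about monotonicity is resolved exactly as you anticipate, by the integration-by-parts sign argument in that appendix lemma.
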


\begin{proof}

Let $\delta_n$ be the function 
$$
\delta_n(x)=\frac{p}{n}-\tau x-1+g_n(x)\;,
$$	
and $\Delta_n(x)=\Exp{\Delta_n(x)}$. Call $x_n$ the solution $\delta_n(x_n)=0$ and $x_{n,0}$ the solution of $\Delta_n(x_{n,0})=0$.
Since $0\leq g_n \leq 1$, we see that $x_n\leq p/(n\tau)$, for otherwise, $\delta_n(x)<0$. The same argument shows that 
if $x>(p/n+\eps)/\tau$, $\Delta_n(x)<-\eps$ and $x\notin T_{n,\eps}$. Similarly, near solutions of $\delta_n(x)=0$ must be less or equal to $(p/n+\eps)/\tau$.

$\bullet$ {\small \textbf{Proof of the fact that $c_\tau$ is such that $\delta_n(c_\tau)=\lo(1)$}}\\
An important remark is that $c_\tau$ is a near solution of $\delta_n(x)=0$. This follows most clearly for arguments we have developed for $\mathsf{c}_{\tau,p}$ so we start by giving details through arguments for this random variable. Recall that in the notation of Lemma \ref{lemma:approxXinByTrace}, we had 
$$
p/n-\tau \mathsf{c}_{\tau,p}=\frac{1}{n}\trace{\id_n-M}.
$$
Now, according to Equation \eqref{eq:keyEqnInImplicitDefOfCtaup}, 
$$
\frac{1}{n}\trace{\id_n-M}=1-\frac{1}{n}\sum_{i=1}^n \frac{1}{1+\psi'(r_{i,[p]})\frac{1}{n}V_i\trsp (\mathfrak{S}_p(i)+\tau\id)^{-1}V_i}.
$$
According to Lemmas \ref{lemma:ControlEtais} and \ref{lemma:ControlKn}, we have 
$$
\sup_{i}\left|\frac{1}{n}V_i\trsp (\mathfrak{S}_p(i)+\tau\id)^{-1}V_i-\mathsf{c}_{\tau,p}\right|=\gO_{L_k}(\polyLog(n)n^{-1/2}).
$$
Of course, when $x\geq 0$ and $y\geq 0$, $|1/(1+x)-1/(1+y)|\leq |x-y|$. Using our bounds on $\psi'(r_{i,[p]})$, we easily see that, 
$$
p/n-\tau \mathsf{c}_{\tau,p}-1+\frac{1}{n}\sum_{i=1}^n \frac{1}{1+\mathsf{c}_{\tau,p}\psi'(r_{i,[p]})}=\gO_{L_k}(n^{-1/2}\polyLog(n))\;.
$$
Exactly the same computations can be made with $c_{\tau}$, so we have established that 
$$
p/n-\tau c_\tau-1+\frac{1}{n}\sum_{i=1}^n \frac{1}{1+c_{\tau}\psi'(R_i)}=\gO_{L_k}(n^{-1/2}\polyLog(n))\;.
$$
Now we have seen in Corollary \ref{coro:AggregResApproxBetaHatByBetaTildeIncl} that 
$$
\sup_{i}|R_i-\prox_{c_i}(\rho)(\rTilde_{i,i})|=\gO_{L_k}(n^{-1/2}\polyLog(n))\;.
$$
Through our assumptions on $\psi'$, this of course implies that 
$$
\sup_{i}|\psi'(R_i)-\psi'[\prox_{c_i}(\rho)(\rTilde_{i,i})]|=\gO_{L_k}(n^{-1/2}\polyLog(n))
$$
We have furthermore noted that $\sup_i|c_i-c_\tau|=\gO_{L_k}(n^{-1/2}\polyLog(n))$ after Lemma \ref{lemma:ControlKn}. Using the proof of Lemma \ref{lemma:DiffProxRespectc}, this implies that 
$$
\sup_i\left|\psi'[\prox_{c_i}(\rho)(\rTilde_{i,i})]-\psi'[\prox_{c_\tau}(\rho)(\rTilde_{i,i})]\right|=\gO_{L_k}(n^{-1/2}\polyLog(n))\;
$$
and therefore 
$$
\sup_i\left|\psi'[R_i]-\psi'[\prox_{c_\tau}(\rho)(\rTilde_{i,i})]\right|=\gO_{L_k}(n^{-1/2}\polyLog(n))\;
$$
So we have established that $\delta_n(c_\tau)=\gO_{L_k}(n^{-1/2}\polyLog(n))$.

$\bullet$ {\small \textbf{Final details}}\\
Note that for any given $x$, $\delta_n(x)-\Delta_n(x)=\lo_P(1)$ by using Lemma \ref{lemma:ControlSupDeltagnMeanGn}. In our case, with the notation of this lemma, $B=p/(n\tau)+\eta/\tau$, for $\eta>0$ given.

This implies that, for any given $\eps>0$ 
$$
\sup_{x\in(0,p/(n\tau)+\eta/\tau]}|\delta_n(x)-\Delta_n(x)|<\eps\;,
$$
with high-probability when $n$ is large. Therefore, for any $\eps>0$
$$
|\Delta_n(x_n)|\leq \eps
$$
with high-probability. 
This exactly means that $x_n \in T_{n,\eps}$ with high-probability. The same argument applies for near solutions of $\delta_n(x)=0$, which, for any $\eps>0$ must belong to $T_{n,\eps}$ as $n\tendsto \infty$ with high-probability. Of course, there is nothing random about $T_{n,\eps}$ which is a deterministic set. Note that $T_{n,\eps}$ is compact because it is bounded and closed, using the fact that $g_n$ and $\Exp{g_n}$ are continuous.

If $T_{n,0}$ were reduced to a single point, we would have established the asymptotically deterministic character of $c_\tau$. 

Given our work concerning the limiting behavior of $\rTilde_{i,(i)}$ and our assumptions about $\eps_i$'s, we see that Lemma \ref{lemma:suffCondKeyEqnInCHasUniqueSolution} applies to $\lim_{n\tendsto \infty}\Delta_n(x)$ under assumption \textbf{F1}. Therefore, as $n\tendsto \infty$, $T_{n,0}$ is reduced to a point and $c_\tau$ is asymptotically non-random.
\end{proof}
As we had noted in \cite{NEKetAlRobustRegressionTechReport11}, 
$$
\frac{\partial}{\partial t}\prox_c(\rho)(t)=\prox_c(\rho)'(t)=\frac{1}{1+c\psi'(\prox_c(\rho)(t))}\;.
$$
So $\Delta_n$ can be interpreted as 
$$
\Delta_n(x)=\frac{p}{n}-\tau x-1+\frac{1}{n}\sum_{i=1}^n \Exp{\prox_x(\rho)'(\rTilde_{i,(i)})}\;.
$$
The fact that $c_\tau$ is asymptotically arbitrarily close to the root of $\Delta_n(x)=0$ gives us the first equation in the system appearing in Theorem \ref{thm:systemRidgeRegularizedRigorous}.

The second equation of the system comes from Equation \eqref{eq:secondStepTowardsSecondEquationOfSystem}. Theorem \ref{thm:systemRidgeRegularizedRigorous} is shown.

\section{From the ridge-regularized to the un-regularized problem}\label{sec:regularizedToUnregularized}
Our original motivation in \cite{NEKetAlRobustRegressionTechReport11} was to study the unpenalized problem, namely $\betaHat$ was defined as 
$$
\betaHat=\argmin_{\beta} \frac{1}{n}\sum_{i=1}^n \rho(Y_i-X_i\trsp \beta)\;.
$$
We now explain how we can derive the system in the unpenalized case from the one we have obtained in the penalized case, when $p/n<1$. 

We first note that when $p<n$, and for instance the $X_i$'s have a continuous distribution, if $Y_i=X_i\trsp \beta_0+\eps_i$, 
$$
\betaHat-\beta_0=\argmin_{\beta}\frac{1}{n}\sum_{i=1}^n \rho(\eps_i-X_i\trsp \beta)\;.
$$
So to understand the error we make when using regression M-estimates, i.e the vector $\betaHat-\beta_0$, it is enough to study the properties of our estimator in the null case where $\beta_0=0$. Of course, we have previously studied the penalized version of this particular problem. 

We have the following theorem. 
\begin{theorem}
Suppose our assumptions \textbf{O1-O7}, \textbf{P1} and \textbf{F1-F2} hold. Suppose further that $\limsup p/n<1$. 
Call 
\begin{align*}
\betaHat&=\argmin_{\beta}\frac{1}{n}\sum_{i=1}^n \rho(\eps_i-X_i\trsp \beta)\;,\text{ and }\\
\betaHat_\tau&=\argmin_{\beta}\frac{1}{n}\sum_{i=1}^n \rho(\eps_i-X_i\trsp \beta)+\tau \frac{\norm{\betaHat}^2}{2}\;.
\end{align*}

If $\rho$ is strongly convex, 
$$
\lim_{\tau \tendsto 0} \norm{\betaHat_{\tau}-\betaHat}=\lo_P(1)\;.
$$
Hence $\norm{\betaHat}$ is asymptotically deterministic and can be computed via 
$$
\norm{\betaHat}=\lim_{\tau\tendsto 0} \norm{\betaHat_\tau}\;.
$$
\end{theorem}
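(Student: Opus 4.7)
My plan centers on the strong-convexity variant of Proposition \ref{prop:ControlDeltaBetaDeltaf}: since $\rho$ is strongly convex with modulus $C$, re-running the mean-value argument of that proposition with the regularization parameter set to $0$ yields, for the unpenalized gradient $f_0(\beta)=-\frac{1}{n}\sum_i X_i\psi(\eps_i-X_i\trsp\beta)$,
$$
\norm{\beta_1-\beta_2}\leq \frac{1}{C\,\lambda_{\min}(\SigmaHat)}\,\norm{f_0(\beta_1)-f_0(\beta_2)},
$$
provided $\lambda_{\min}(\SigmaHat)>0$. Apply this with $\beta_1=\betaHat$ (so $f_0(\betaHat)=0$) and $\beta_2=\betaHat_\tau$; the penalized first-order condition reads $f_0(\betaHat_\tau)+\tau\betaHat_\tau=0$, so $f_0(\betaHat_\tau)=-\tau\betaHat_\tau$, and we obtain the clean inequality
$$
\norm{\betaHat_\tau-\betaHat}\leq \frac{\tau\,\norm{\betaHat_\tau}}{C\,\lambda_{\min}(\SigmaHat)}.
$$

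I would then bound the two random factors on the right. Comparing the penalized objective at $\betaHat_\tau$ and at $\betaHat$, and using that $\betaHat$ minimizes the unpenalized objective, immediately yields $\norm{\betaHat_\tau}\leq \norm{\betaHat}$. For $\norm{\betaHat}$ itself, the strong convexity chord inequality from $0$ to $\betaHat$ gives $\frac{C\lambda_{\min}(\SigmaHat)}{2}\norm{\betaHat}^2\leq \frac{1}{n}\sum_i\rho(\eps_i)-\frac{1}{n}\sum_i\rho(R_i)\leq \frac{1}{n}\sum_i\rho(\eps_i)$, which is $O_{L_k}(1)$ under assumptions \textbf{O2}, \textbf{O3} and \textbf{F2}, by the same moment-control reasoning used in Lemma \ref{lemma:betaHatIsBounded}.

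The main obstacle is establishing a uniform-in-$n$ lower bound $\lambda_{\min}(\SigmaHat)\geq c_0>0$ on a high-probability event, using only $\limsup p/n<1$. Under \textbf{O4} and \textbf{P1}, this can be done by the same $\eps$-net/union-bound technology cited for the spectral norm in Lemma \ref{lemma:controlSpectralNorms}: for each fixed unit vector $v$, $X\mapsto \norm{Xv}/\sqrt{n}$ is a $1/\sqrt{n}$-Lipschitz convex function of $X$, so by \textbf{O4} it concentrates exponentially around its mean (which tends to $1$); combining a net of cardinality $e^{Kp}$ in $S^{p-1}$ with $p/n<1$ and $1/c_n=\polyLog(n)$ delivers $\lambda_{\min}(\SigmaHat)\geq \frac{1}{2}(1-\sqrt{p/n})^2$ outside a set of probability $\exp(-cn/\polyLog(n))$.

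Putting these ingredients together: on that high-probability event one has the deterministic bound $\norm{\betaHat_\tau-\betaHat}\leq K\tau$ with $K=O_P(1)$ independent of $\tau$, so $\lim_{\tau\downarrow 0}\norm{\betaHat_\tau-\betaHat}=0$ in probability. Theorem \ref{thm:systemRidgeRegularizedRigorous} gives, for each fixed $\tau>0$, $\norm{\betaHat_\tau}\to r_\rho(\kappa,\tau)$ deterministically; since $\tau\mapsto \norm{\betaHat_\tau}$ is non-increasing (a variational consequence of the definition) and uniformly bounded by $\norm{\betaHat}=O_P(1)$, the function $\tau\mapsto r_\rho(\kappa,\tau)$ is non-increasing and bounded, hence admits a finite limit $r_\rho(\kappa,0^+)$ as $\tau\downarrow 0$. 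A standard three-epsilon argument (choose $\tau$ small to make $K\tau$ and $|r_\rho(\kappa,\tau)-r_\rho(\kappa,0^+)|$ small, then $n$ large to make $|\norm{\betaHat_\tau}-r_\rho(\kappa,\tau)|$ small) identifies the in-probability limit of $\norm{\betaHat}$ with $r_\rho(\kappa,0^+)$.
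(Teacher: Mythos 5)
Your argument follows the same skeleton as the paper's proof: the strong-convexity form of Proposition \ref{prop:ControlDeltaBetaDeltaf} applied to the unpenalized gradient, the first-order condition $f_0(\betaHat_\tau)=-\tau\betaHat_\tau$, a bound on $\norm{\betaHat_\tau}$, and a lower bound on $\lambda_{\min}(\SigmaHat)$. Two of your refinements are genuine improvements: the comparison of objectives giving $\norm{\betaHat_\tau}\leq\norm{\betaHat}$, combined with the strong-convexity bound on $\norm{\betaHat}$, yields $\norm{\betaHat_\tau-\betaHat}=\gO(\tau)$ rather than the paper's $\gO(\sqrt{\tau})$ (the paper instead uses $\norm{\betaHat_\tau}\leq\sqrt{2/\tau}\sqrt{n^{-1}\sum_i\rho(\eps_i)}$ from Equation \eqref{eq:boundNormBetaHatFromSimplyRidge}); and your closing three-epsilon argument, using monotonicity of $\tau\mapsto\norm{\betaHat_\tau}$ to identify the limit, makes explicit an interchange of limits that the paper leaves implicit.

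There is, however, one concrete gap: your treatment of $\lambda_{\min}(\SigmaHat)$. A crude $\eps$-net/union bound of the type used for $\opnorm{\SigmaHat}$ in Lemma \ref{lemma:controlSpectralNorms} controls $\inf_{\norm{v}=1}\norm{Xv}/\sqrt{n}$ only up to a deviation of order $\sqrt{Kp/n}$ (the union over $e^{Kp}$ net points forces the individual deviation $t$ to satisfy $c_n n t^2\gg Kp$, and one must additionally pay $\eps\,\opnorm{X/\sqrt{n}}$ to pass from the net to the sphere). This gives a positive lower bound on $\lambda_{\min}(\SigmaHat)$ only when $p/n$ is small enough that $1-C\sqrt{Kp/n}>0$ for the (non-sharp) constants produced by the argument; it does not deliver $\lambda_{\min}(\SigmaHat)\geq\frac{1}{2}(1-\sqrt{p/n})^2$ for all $\limsup p/n<1$. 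The paper avoids this by invoking the sharp Bai--Yin-type result $\lambda_{\min}(\SigmaHat)\tendsto(1-\sqrt{p/n})^2$ for matrices with i.i.d.\ entries having $4+\eps$ moments (available here thanks to \textbf{P1} and \textbf{O4}). You should either cite that result as the paper does, or restrict the claim to $p/n$ below the threshold your net argument actually reaches.
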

\begin{proof}
We call $f_{\tau}(\beta)=-\frac{1}{n}\sum_{i=1}^n X_i\psi(\eps_i-X_i\trsp \beta)+\tau \beta$ and $f(\beta)=f_0(\beta)$. 

Since by definition, $\betaHat$ is such that
$$
\sum X_i \psi(\eps_i-X_i\trsp \betaHat)=0\;,
$$
we see that $f_{\tau}(\betaHat)=\tau \betaHat$. By a similar token, we see that $f_0(\betaHat_\tau)=-\tau \betaHat_\tau$. If $\rho$ is strongly convex with modulus of convexity $C$, we see, using Proposition \ref{prop:ControlDeltaBetaDeltaf} that, by working with $\nabla f_\tau$, we get
$$
\norm{\betaHat_\tau-\betaHat}\leq \frac{\tau}{C \lambda_{\min}(\SigmaHat)+\tau} \norm{\betaHat}\;,
$$
and by working with $\nabla f_0$ - along the same lines as in the proof of Proposition \ref{prop:ControlDeltaBetaDeltaf} - we get 
$$
\norm{\betaHat_\tau-\betaHat}\leq \frac{\tau}{C \lambda_{\min}(\SigmaHat)} \norm{\betaHat_\tau}\;.
$$
Recall that we showed in Equation \eqref{eq:boundNormBetaHatFromSimplyRidge} that 
$$
\norm{\betaHat_\tau}\leq \sqrt{\frac{2}{\tau}} \sqrt{\frac{1}{n}\sum_{i=1}^n \rho(\eps_i)}\;.
$$
This shows that 
$$
\norm{\betaHat_\tau-\betaHat}\leq \frac{\sqrt{2\tau}}{C \lambda_{\min}(\SigmaHat)} \sqrt{\frac{1}{n}\sum_{i=1}^n \rho(\eps_i)}\;.
$$
Under our assumptions, $\frac{1}{n}\sum_{i=1}^n \rho(\eps_i)=\gO_P(1)$. 
Under the assumptions that, for instance, the entries of $X_i$'s are i.i.d with 4+$\eps$ moments (which is always the case under our assumptions), it is well known that $\lambda_{\min}(\SigmaHat)\tendsto (1-\sqrt{\frac{p}{n}})^2$ in probability and a.s (\cite{bai99}).

We conclude that $\norm{\betaHat_\tau-\betaHat}\tendsto 0$ in probability as $\tau \tendsto 0$ under our assumptions. 
\end{proof}

Under for instance Gaussian design assumptions (i.e $X_i$'s have distribution ${\cal N}(0,\id_p)$), it is possible to bound $\Exp{1/\lambda_{\min}(\SigmaHat)}$ using essentially results in \cite{silverstein85} as well as elementary but non-trivial linear algebra (see the appendix of \cite{troppmartinsson2011} for instance). This would give an approximation in $L_2$, provided the random variable $\rho(\eps_i)$ has enough moment. 

It would be possible with quite a bit of extra work to dispense with the assumption of strong convexity and move for instance to strict convexity (see \cite{HiriartLemarechalConvexAnalysisAbridged2001} if needed). We refer to the discussion after the proof of Proposition  \ref{prop:ControlDeltaBetaDeltaf} for more details about this issue. 

We note that convergence in probability of $\betaHat$ is enough for our confidence interval statements from \cite{NEKOptimalMEstimationPNASPublished2013} (details in the supplementary material of that paper) to go through. 

\subsection{Other extensions}
Moving from random vectors $X_i$'s like the ones we have studied to vectors of the form $\tilde{X}_i=\lambda_i X_i$, where $\lambda_i$ is a random variable (i.e a scalar) independent of $X_i$ does not offer any new conceptual difficulties. Indeed our heuristic work in \cite{NEKRobustPaperPNAS2013Published} handled - heuristically of course -  that case, so the arguments we gave here would be easy to modify. This extended class of models - which is akin to elliptical distributions in multivariate statistics (see \cite{anderson03}) - is interesting because it includes distributions that do not share the geometric properties that ``concentrated'' random vectors share. We do not solve the elliptical problem here in complete details because of the extra notational burden involved. 

Another easy extension of the work presented here is to study the weighted regression case, i.e for weights $\{w_i\}_{i=1}^n$, $\betaHat$ is defined as 
$$
\betaHat=\argmin_{\beta \in \mathbb{R}^p} \sum_{i=1}^n w_i \rho(\eps_i-X_i\trsp \beta)+\frac{\tau}{2}\norm{\beta}^2\;.
$$
Once again, only minor modifications are needed to our proof - the heuristic we proposed easily handled this. More generally, working on the problem of understanding 
$$
\betaHat=\argmin_{\beta \in \mathbb{R}^p} \sum_{i=1}^n \rho_i(\eps_i-X_i\trsp \beta)+\frac{\tau}{2}\norm{\beta}^2\;,
$$
where $\rho_i$ are potentially different functions and $X_i$'s are ``elliptical'' (as defined above) seems to be within relatively easy reach of the method developed and presented here. 

Finally, we see that when $Y_i=X_i\trsp \beta_0+\eps_i$, 
$$
\betaHat_\tau-\beta_0=\argmin_{\beta \in \mathbb{R}^p} \frac{1}{n}\sum_{i=1}^n \rho(\eps_i-X_i\trsp \beta)+\frac{\tau}{2} \norm{\beta+\beta_0}^2\;.
$$
This problem - a mild variant of the one we have studied here - should be amenable to analysis with the method we used here. 

\newpage
\appendix
\begin{center}
\textbf{\textsc{APPENDIX}}
\end{center}
\vspace{1cm}
\renewcommand{\thesection}{\Alph{section}}
\renewcommand{\theequation}{\Alph{section}-\arabic{equation}}
\renewcommand{\thelemma}{\Alph{section}-\arabic{lemma}}
\renewcommand{\thecorollary}{\Alph{section}-\arabic{corollary}}
\renewcommand{\thesubsection}{\Alph{section}-\arabic{subsection}}
\renewcommand{\thesubsubsection}{\Alph{section}-\arabic{subsection}.\arabic{subsubsection}}
\setcounter{equation}{0}  
\setcounter{lemma}{0}
\setcounter{corollary}{0}
\setcounter{section}{0}
\section{Notes on the prox}
\begin{lemma}\label{lemma:ValueProxat0}
Let $\rho$ be such that $\psi$ changes sign at 0, i.e $\sgn(\psi(x))=\sgn(x)$ for $x\neq 0$. Then,
$$
\prox_c(\rho)(0)=0\;.
$$	
Furthermore,
$$
\left|\psi(\prox_c(\rho)(x))\right|\leq |\psi(x)|\;.
$$
Also,
$$
\left|\psi(\prox_c(\rho)(x))\right|\leq |x|/c\;.
$$
\end{lemma}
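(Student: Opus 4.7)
The plan is to exploit the characterization $\prox_c(\rho)(x) = (\id + c\psi)^{-1}(x)$, which is available because $\rho$ is convex and differentiable with $\psi = \rho'$, so that $y = \prox_c(\rho)(x)$ is the unique solution of the first-order optimality condition $y + c\psi(y) = x$. Throughout, the sign condition $\sgn(\psi(t)) = \sgn(t)$ and $c > 0$ will be the only facts about $\rho$ actually used; monotonicity of $\psi$ (from $\psi' \geq 0$) will be used at the end.

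For the first claim, I would set $y = \prox_c(\rho)(0)$, so that $y + c\psi(y) = 0$. If $y > 0$, then $\psi(y) \geq 0$ and $y + c\psi(y) > 0$, contradiction; symmetrically $y < 0$ is impossible. Hence $y = 0$.

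For the second claim, I would fix $x$, set $y = \prox_c(\rho)(x)$, and argue first that $\sgn(y) = \sgn(x)$ and $|y| \leq |x|$. Indeed, from $y + c\psi(y) = x$: if $x > 0$ and $y \leq 0$ then the left-hand side is $\leq 0 < x$, a contradiction, so $y > 0$; and if $y > x$ then $y + c\psi(y) \geq y > x$, again a contradiction, so $0 < y \leq x$. The case $x < 0$ is analogous, and $x = 0$ is covered by the first claim. Since $\psi$ is non-decreasing (because $\psi' \geq 0$) and $\sgn(\psi) = \sgn(\id)$, the inequality $0 \leq y \leq x$ (resp.\ $x \leq y \leq 0$) gives $0 \leq \psi(y) \leq \psi(x)$ (resp.\ $\psi(x) \leq \psi(y) \leq 0$), so $|\psi(y)| \leq |\psi(x)|$.

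For the third claim, I would use the optimality condition directly: $c\psi(y) = x - y$, hence $|\psi(y)| = |x - y|/c$. Since $y$ and $x$ have the same sign and $|y| \leq |x|$ by the previous step, $|x - y| = |x| - |y| \leq |x|$, giving $|\psi(y)| \leq |x|/c$. The only subtlety is making sure the sign-and-magnitude argument for $y$ in terms of $x$ is airtight; this is the one place where the sign hypothesis on $\psi$ is indispensable, and I would state it as a short standalone observation before applying it to both bounds.
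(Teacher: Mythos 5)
Your proof is correct and follows essentially the same route as the paper: both rest on the fixed-point characterization $y+c\psi(y)=x$, the sign hypothesis on $\psi$, and monotonicity of $\psi$ from convexity. The only difference is that the paper obtains $|\prox_c(\rho)(x)|\leq |x|$ by invoking contractivity of the proximal mapping, whereas you derive the sign agreement and the bound $|y|\leq |x|$ directly from the optimality condition, which makes the argument slightly more self-contained but is otherwise the same proof.
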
	
\begin{proof}
By definition, we have
$$
\prox_c(\rho)(x)+c\psi(\prox_c(\rho)(x))=x\;.
$$	
Therefore,
$$
\prox_c(\rho)(0)=-c\psi(\prox_c(\rho)(0))\;.
$$
Hence, if we call $y=\prox_c(\rho)(0)$, we have $\sgn(y)=-\sgn(\psi(y))$. The assumptions on $\psi$ therefore guarantee that $y=0$, for otherwise we would have a contradiction.

We also note that $\sgn(\prox_c(\rho)(x))=\sgn(x)$, since $\sgn[\psi(\prox_c(\rho)(x))]=\sgn(\prox_c(\rho)(x)))$.

Using contractivity of the prox, we see that
$$
|\prox_c(\rho)(x)|=|\prox_c(\rho)(x)-\prox_c(\rho)(0)|\leq |x|\;.
$$
Since $\rho$ is convex, we see that $\psi$ is increasing. If $x>0$, $\prox_c(\rho)(x)>0$, and therefore,
$$
0\leq \psi(\prox_c(\rho)(x))\leq \psi(x)\;.
$$
Similarly, if $x<0$, $x\leq \prox_c(\rho)(x)<0$ and therefore, $\psi(x)\leq \psi(\prox_c(\rho)(x))<0$. The second statement of the lemma is shown.

The last statement is a simple consequence of the fact that $c\psi(\prox_c(\rho)(x))=x-\prox_c(\rho)(x)$, from which it immediately follows that
$$
|\psi(\prox_c(\rho)(x))|\leq \frac{|x|}{c}\;.
$$
\end{proof}

We will also need the following simple result. 

\begin{lemma}\label{lemma:DiffProxRespectc}
Suppose $x$ is a real and $\rho$ is twice differentiable and convex. Then, for $c>0$,
$$
\frac{\partial}{\partial c}\rho(\prox_c(\rho)(x))=-\frac{\psi^2(\prox_c(\rho)(x))}{1+c\psi'(\prox_c(\rho)(x))}\;.
$$	
In partial, at $x$ given $c\rightarrow \rho(\prox_c(\rho)(x))$ is decreasing in $c$.
\end{lemma}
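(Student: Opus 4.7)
My plan is to exploit the implicit characterization of the proximal mapping, namely $\prox_c(\rho)(x) + c\psi(\prox_c(\rho)(x)) = x$, which is valid under the stated assumptions since $\rho$ is differentiable and convex. Fixing $x$ and writing $y(c) = \prox_c(\rho)(x)$, this identity says $y(c) + c\psi(y(c)) = x$ for every $c > 0$.

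The first step is to verify that $c \mapsto y(c)$ is differentiable. Since $\rho$ is twice differentiable and convex, $\psi'$ exists and is nonnegative, so $t \mapsto t + c\psi(t)$ is $C^1$ with derivative $1 + c\psi'(t) \geq 1 > 0$. Hence this map is a $C^1$ diffeomorphism of $\mathbb{R}$, and $y(c) = (\id + c\psi)^{-1}(x)$ inherits $C^1$ dependence on $c$ by the implicit function theorem applied jointly to $(c,t) \mapsto t + c\psi(t) - x$.

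Differentiating the defining identity $y(c) + c\psi(y(c)) = x$ with respect to $c$ gives
\[
y'(c) + \psi(y(c)) + c\psi'(y(c))\, y'(c) = 0,
\]
which rearranges to
\[
y'(c) = -\frac{\psi(y(c))}{1 + c\psi'(y(c))}.
\]
Then, by the chain rule,
\[
\frac{\partial}{\partial c}\rho(y(c)) = \psi(y(c))\, y'(c) = -\frac{\psi^2(\prox_c(\rho)(x))}{1 + c\psi'(\prox_c(\rho)(x))},
\]
which is the claimed formula. The final monotonicity statement is immediate: since $\rho$ is convex we have $\psi' \geq 0$, so the denominator is at least $1$, and the numerator is nonnegative, showing $c \mapsto \rho(\prox_c(\rho)(x))$ is nonincreasing in $c$.

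There is no real obstacle here; the only technical point to be careful about is justifying the differentiability of $y(c)$ before differentiating implicitly, which the strict monotonicity of $t \mapsto t + c\psi(t)$ (a consequence of $\psi' \geq 0$) and the implicit function theorem handle cleanly.
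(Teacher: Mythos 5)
Your proof is correct and follows essentially the same route as the paper: implicit differentiation of the identity $\prox_c(\rho)(x)+c\psi(\prox_c(\rho)(x))=x$ to get $\partial_c \prox_c(\rho)(x)$, then the chain rule and the sign of the resulting expression. The only difference is that you explicitly justify the differentiability of $c\mapsto\prox_c(\rho)(x)$ via the implicit function theorem, a point the paper leaves implicit.
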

\newcommand{\pcx}{\prox_c(\rho)(x)}
\begin{proof}
Using the fact that 
$$
\pcx+c\psi(\pcx)=x,
$$
we easily see that 
$$
\frac{\partial}{\partial c}\pcx=-\frac{\psi(\prox_c(\rho)(x))}{1+c\psi'(\prox_c(\rho)(x))}\;.
$$
It then follows immediately that 
$$
\frac{\partial}{\partial c}\rho(\prox_c(\rho)(x))=-\frac{\psi^2(\prox_c(\rho)(x))}{1+c\psi'(\prox_c(\rho)(x))}\;.
$$
The denominator is positive, from which we immediately deduce that $c\rightarrow \rho(\prox_c(\rho)(x))$ is decreasing in $c$.
\end{proof}
We also make the following observation, which was essential to finding the system of equations in \cite{NEKRobustPaperPNAS2013Published}
\begin{lemma}\label{lemma:simpleObsProx}
We have 
$$
\frac{\partial}{\partial x}\pcx=\frac{1}{1+c\psi'(\pcx)}\;.
$$
\end{lemma}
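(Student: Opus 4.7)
The plan is to differentiate the defining equation of the proximal mapping implicitly. Recall that under Assumption \textbf{O2}, $\rho$ is twice differentiable and convex, so $\psi=\rho'$ is continuously differentiable with $\psi'\geq 0$. By the equivalent characterization
$$
\prox_c(\rho)(x) = (\id+c\psi)^{-1}(x),
$$
given in the paper's introduction, $\pcx$ satisfies the identity
$$
\pcx + c\psi(\pcx) = x.
$$

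First I would verify that $x\mapsto \pcx$ is differentiable. The map $y\mapsto y+c\psi(y)$ has derivative $1+c\psi'(y)\geq 1>0$ (since $c>0$ and $\psi'\geq 0$), so it is a $C^1$-diffeomorphism of $\mathbb{R}$ onto itself, and hence its inverse $\pcx$ is $C^1$ in $x$. Then, differentiating the displayed identity with respect to $x$ and applying the chain rule gives
$$
\frac{\partial}{\partial x}\pcx + c\,\psi'(\pcx)\,\frac{\partial}{\partial x}\pcx = 1,
$$
from which solving for the derivative yields exactly
$$
\frac{\partial}{\partial x}\pcx = \frac{1}{1+c\psi'(\pcx)},
$$
as claimed.

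There is no real obstacle here: the denominator $1+c\psi'(\pcx)$ is bounded below by $1$, so the division is unproblematic, and the only subtlety is invoking the inverse function theorem (equivalently, the implicit function theorem applied to $G(x,y)=y+c\psi(y)-x$ with $\partial_y G = 1+c\psi'(y)>0$) to legitimately differentiate $\pcx$ with respect to $x$. This is essentially the same calculation already carried out for the $c$-derivative in Lemma \ref{lemma:DiffProxRespectc}, just with $x$ playing the role of the parameter instead of $c$.
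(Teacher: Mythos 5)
Your proof is correct and follows essentially the same route as the paper, which simply cites the representation $\prox_c(\rho)=(\id+c\psi)^{-1}$ and states that the derivative formula follows immediately; you have merely filled in the implicit differentiation and the inverse-function-theorem justification. No issues.
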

A proof of this fact follows immediately from the well-known representation (see \cite{MoreauProxPaper65})
$$
\pcx=(\id+c\psi)^{-1}(x)\;.
$$
We finally make notice of the following simple fact. 
\begin{lemma}\label{lemma:simpleObsxpsiproxsquared}
The function $x\rightarrow [x\psi(\prox_x(\rho)(u))]^2$ (defined on $\mathbb{R}_+$) is increasing, for any $u$.
\end{lemma}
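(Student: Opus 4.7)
The plan is to reduce the claim to monotonicity of $[u-\prox_x(\rho)(u)]^2$ and then differentiate using the derivative formulas already established in Lemmas \ref{lemma:ValueProxat0} and \ref{lemma:DiffProxRespectc}. The key identity is the defining relation of the proximal mapping
$$
\prox_x(\rho)(u)+x\,\psi(\prox_x(\rho)(u))=u,
$$
which rewrites $x\psi(\prox_x(\rho)(u))=u-\prox_x(\rho)(u)$. Hence
$$
[x\,\psi(\prox_x(\rho)(u))]^2=[u-\prox_x(\rho)(u)]^2,
$$
so it suffices to show the right-hand side is non-decreasing in $x\in\mathbb{R}_+$ for fixed $u$.

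Next, I would use the computation carried out inside the proof of Lemma \ref{lemma:DiffProxRespectc}, namely
$$
\frac{\partial}{\partial x}\prox_x(\rho)(u)=-\frac{\psi(\prox_x(\rho)(u))}{1+x\,\psi'(\prox_x(\rho)(u))}.
$$
Applying the chain rule and substituting back the identity $u-\prox_x(\rho)(u)=x\,\psi(\prox_x(\rho)(u))$ yields
$$
\frac{d}{dx}[u-\prox_x(\rho)(u)]^2
=-2[u-\prox_x(\rho)(u)]\cdot\frac{\partial}{\partial x}\prox_x(\rho)(u)
=\frac{2x\,\psi^2(\prox_x(\rho)(u))}{1+x\,\psi'(\prox_x(\rho)(u))}.
$$

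Under Assumption \textbf{O2}, $\rho$ is convex so $\psi'\geq 0$; combined with $x\geq 0$ this makes the displayed derivative non-negative, proving that the function is non-decreasing (strictly increasing wherever $\psi(\prox_x(\rho)(u))\neq 0$, which by Lemma \ref{lemma:ValueProxat0} occurs iff $u\neq 0$). There is no real obstacle here: the only subtlety is making sure the derivative formula for $\prox_x(\rho)(u)$ with respect to $x$ is valid, which is immediate from the implicit differentiation of the prox identity and was already verified in Lemma \ref{lemma:DiffProxRespectc}.
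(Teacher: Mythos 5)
Your proposal is correct and follows essentially the same route as the paper: both rewrite $x\psi(\prox_x(\rho)(u))=u-\prox_x(\rho)(u)$, differentiate in $x$ via the implicit derivative of the prox from Lemma \ref{lemma:DiffProxRespectc}, and conclude from the sign of $\frac{2x\,\psi^2(\prox_x(\rho)(u))}{1+x\psi'(\prox_x(\rho)(u))}$. No gaps.
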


\begin{proof}
Let us consider $f_u(x)=x\psi(\prox_x(\rho)(u))$. Note that $f_u(x)=u-\prox_x(\rho)(u)$. So 
$$
\frac{\partial}{\partial x} f_u(x)=\frac{\psi(\prox_x(\rho)(u))}{1+x\psi'(\prox_x(\rho(u)))}\;.
$$
Hence, 
$$
\frac{\partial}{\partial x} f^2_u(x)=2x\frac{\psi^2(\prox_x(\rho)(u))}{1+x\psi'(\prox_x(\rho(u)))}\geq 0
$$
since $x\geq 0$ and $\psi'\geq 0$.
\end{proof}

\section{On convex Lipschitz functions of random variables}
In this section, we provide a brief reminder concerning convex Lipschitz functions of random variables.
\begin{lemma}\label{lemma:controlLipFuncConcRVsLp}
Suppose that $\{X_i\} \in \mathbb{R}^p$ satisfy the following concentration property: $\exists C_n, c_n$ such that for any $F_i$, a convex, 1-Lipschitz function of $X_i$, 
$$
P(|F_i(X_i)-m_i|\geq t)\leq C_n \exp(-c_n t^2)\;,
$$
where $m_i$ is deterministic. 
Then if ${\cal F}_n=\sup_i |F_i(X_i)-m_i|$, we have, even when the $X_i$'s are dependent:
\begin{enumerate}
\item if $u_n=\sqrt{\log(n)/c_n}$, $\Exp{{\cal F}_n}\leq u_n+C_n/(2\sqrt{c_n}\sqrt{\log n})=\frac{\sqrt{\log n}}{\sqrt{c_n}}\left(1+C_n/(2\log n)\right)$. Similar bounds hold in $L_k$ for any finite given $k$.
\item when $C_n\leq C$, where $C$ is independent of $n$, there exists $K$, independent of $n$ such that ${\cal F}_n/u_n \leq K$ with overwhelming probability, i.e probability asymptotically smaller than any power of $1/n$. 
\end{enumerate}
In particular, 
$$
{\cal F}_n=\gO(\polyLog(n)/\sqrt{c_n})
$$
in probability and any $L_k$, $k$ fixed and given.
\end{lemma}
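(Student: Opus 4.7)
The plan is a straightforward union bound followed by tail integration, which uses no independence at all and so automatically covers the dependent case. I would begin by observing that for any $t\geq 0$,
$$
P({\cal F}_n \geq t) \;\leq\; \sum_{i=1}^n P(|F_i(X_i)-m_i| \geq t) \;\leq\; n C_n \exp(-c_n t^2).
$$
The choice $u_n = \sqrt{\log(n)/c_n}$ is calibrated precisely so that $n\exp(-c_n u_n^2) = 1$: below $u_n$ the union bound is useless, while above $u_n$ it turns into $C_n\exp(-c_n(t^2-u_n^2))$ and decays at the individual-component rate.

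For Part (1), I would substitute this bound into the layer-cake identity $\Exp{{\cal F}_n} = \int_0^\infty P({\cal F}_n > t)\,dt$, split the integral at $u_n$, bound the lower piece trivially by $u_n$, and estimate the upper piece with the standard Gaussian-tail inequality $\int_{u_n}^\infty e^{-c_n t^2}\,dt \leq e^{-c_n u_n^2}/(2 c_n u_n)$. The cancellation $n e^{-c_n u_n^2} = 1$ then delivers exactly the advertised remainder $C_n/(2\sqrt{c_n}\sqrt{\log n})$. For moments of order $k$ I would do the same with $\Exp{{\cal F}_n^k} = k \int_0^\infty t^{k-1} P({\cal F}_n > t)\,dt$, change variables $s = \sqrt{c_n}\, t$ in the upper part, and reduce to a truncated Gaussian moment starting at $\sqrt{\log n}$; standard incomplete-Gamma asymptotics show this contribution is $O(C_n \polyLog(n)/c_n^{k/2})$, a lower-order perturbation of $u_n^k$, so that $\Exp{{\cal F}_n^k}^{1/k} \lesssim u_n$ up to polylog corrections.

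Part (2) is immediate from the same union bound: for any fixed $K>0$,
$$
P({\cal F}_n > K u_n) \;\leq\; n C_n \exp(-K^2 \log n) \;=\; C_n\, n^{1-K^2},
$$
so when $C_n$ is bounded, for any prescribed polynomial decay rate $n^{-\alpha}$ one can fix $K$ large enough (independent of $n$) to achieve it -- which is what ``overwhelming probability'' amounts to here. The ``In particular'' statement ${\cal F}_n = \gO(\polyLog(n)/\sqrt{c_n})$, both in probability and in every $L_k$, is then a direct consequence of (1) and (2). There is really no obstacle here; the only care required is in bookkeeping of the tail integrals and tracking $C_n$ through the $L_k$ estimate so that the final dependence on $c_n$ and $\log n$ is correctly recorded.
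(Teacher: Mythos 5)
Your proposal is correct and follows essentially the same route as the paper: a union bound on the tail of ${\cal F}_n$, the layer-cake/tail-integration identity $\Exp{{\cal F}_n^k}=\int_0^\infty k t^{k-1}P({\cal F}_n\geq t)\,dt$ split at $u_n=\sqrt{\log n/c_n}$ with the standard Gaussian-tail (incomplete-Gamma) estimate on the upper piece, and the same direct union-bound computation $P({\cal F}_n>Ku_n)\leq C_n n^{1-K^2}$ for the overwhelming-probability claim. The bookkeeping you describe, including the cancellation $n e^{-c_n u_n^2}=1$ and the constant $C_n/(2\sqrt{c_n}\sqrt{\log n})$, matches the paper's computation exactly.
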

We note that similar techniques can be used to extend the result to situations where we have 
$P(|F_i(X_i)-m_i|\geq t)\leq C_n \exp(-c_n t^\alpha)$, with $\alpha\neq 2$. Of course, the order of magnitudes of the bounds then change.
\begin{proof}
Clearly, 
$$
P({\cal F}_n\geq t)\leq 1\wedge n C_n \exp(-c_n t^2)\;.
$$
Hence, for any $u\geq 0$, 
$$
\Exp{{\cal F}_n^k}\leq u^k+\int_u^{\infty} k t^{k-1} n C_n \exp(-c_n t^2)\;,
$$
since $\Exp{{\cal F}_n^k}=\int_0^{\infty} k t^{k-1} P({\cal F}_n\geq t) dt$. Standard computations show that when $uc_n^2$ is large, and $k\geq 1$,
$$
\int_u^{\infty} t^{k-1}  \exp(-c_n t^2)dt \sim \gO(\frac{u^{k}}{2c_n u^2}\exp(-c_n u^2))\;.
$$
So we see that in that case, for a constant $k$ that depends only on $k$,
$$
\Exp{{\cal F}^k_n}\leq u^k (1+K_k\frac{n C_n}{c_n u^2}\exp(-c_n u^2))\;.
$$
Taking $u_n=\sqrt{\log n/c_n}$, we see that 
$$
\Exp{{\cal F}^k_n}\leq u_n^k(1+K_k\frac{C_n}{\log n})\;.
$$
We conclude that when $C_n/\log n$ remains bounded, $\Exp{{\cal F}^k_n}/u_n^k$ remains bounded. In the case $k=1$, it is easy to see that $K_k=1/2$ and we do not require $\sqrt{c_n}u$ to be large for our arguments to go through. This gives the bound announced in the Lemma.

The probabilistic bound comes simply from the fact that 
$$
P({\cal F}_n\geq t u_n)\leq C_n \exp(-\log_n (t^2-1))\leq C_n \exp(-\log_n (t^2-1))\;.
$$
Hence, 
$$
P\left(\frac{{\cal F}_n}{u_n}\geq K\right)\leq n^{-d}\;,
$$
for any given $d$ if $K$ is large enough.
If we allow $K$ to grow like a power of $\log n$, we also see that the right hand side above can be made even smaller.

\end{proof}
We recall that we denote by $X_{(i)}=\{X_1,\ldots,X_{i-1},X_{i+1},\ldots,X_n\}$. If $I$ is a subset of $\{1,\ldots,n\}$ of size $n-1$, we call $X_I$ the collection of the corresponding $X_i$ random variables. We call $X_{I^c}$ the remaining random variable. 
\begin{lemma}\label{lemma:controlRandomLipFuncConcRVsLp}
Suppose $X_i$'s are independent and satisfy the concentration inequalities as above. 
Consider the situation where $F_{I_k}$ is a convex Lipschitz function of 1 variable, depending on $X_{I_k}$ only and call ${\cal L}_{I_k}$ its Lipschitz constant (at $X_{I_k}$ given)- which is assumed to be random. Call $m_{F_{I_k}}=m_{F_i(X_{I_k^c})}|X_{I_k}$, $m$ being the mean or the median. As before, call ${\mathcal F}_n=\sup_{j=1,\ldots,n}|F_{I_j}(X_{I_j^c})-m_{F_{I_j}}|$
Then ${\mathcal F_n}=\gO(\sqrt{\log n/c_n}\sup_{1\leq j\leq n} {\cal L}_{I_j})$ in probability and in $\sqrt{L_{2k}}$, i.e there exists $K>0$, independent of $n$, such that 
$$
\Exp{{\mathcal F_n}^k}\leq K (\sqrt{\log n/c_n})^k \sqrt{\Exp{\sup_{1\leq j\leq n} {\cal L}_{I_j}^{2k}}}\;.
$$
Hence, ${\mathcal F_n}$ is $\polyLog(n)/c_n^{1/2} \sup_{1\leq j\leq n} {\cal L}_{I_j}$ in $\sqrt{L_{2k}}$.
\end{lemma}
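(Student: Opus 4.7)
The idea is to divide by the (conditionally deterministic) Lipschitz constant $\mathcal{L}_{I_j}$, apply the previous lemma to the resulting $1$-Lipschitz map, and then reintroduce $\sup_j \mathcal{L}_{I_j}$ via Cauchy--Schwarz at the very end.

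I would begin by conditioning on $X_{I_j}$. Because $X_{I_j^c}$ is independent of $X_{I_j}$ by Assumption \textbf{O4} and because $\mathcal{L}_{I_j}$ is deterministic once $X_{I_j}$ is fixed, the map $x\mapsto F_{I_j}(x)/\mathcal{L}_{I_j}$ is convex and $1$-Lipschitz as a function of the free variable $X_{I_j^c}$. The concentration hypothesis then yields, conditionally on $X_{I_j}$,
$$
P\Bigl(|F_{I_j}(X_{I_j^c})-m_{F_{I_j}}|\geq t\,\mathcal{L}_{I_j}\ \Bigm|\ X_{I_j}\Bigr)\leq C_n\exp(-c_n t^2)\;,
$$
and integrating out $X_{I_j}$ gives the unconditional tail $P(G_j\geq t)\leq C_n\exp(-c_n t^2)$, where $G_j\triangleq |F_{I_j}(X_{I_j^c})-m_{F_{I_j}}|/\mathcal{L}_{I_j}$. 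A union bound over $j=1,\dots,n$ produces $P(\sup_j G_j\geq t)\leq nC_n\exp(-c_n t^2)$, and integrating this tail in the same way as in the proof of Lemma \ref{lemma:controlLipFuncConcRVsLp} (taking $u_n=\sqrt{\log n/c_n}$) yields both the high-probability statement $\sup_j G_j=\gO(\sqrt{\log n/c_n})$ and the moment bound $\Exp{(\sup_j G_j)^{2k}}\leq K(\log n/c_n)^k$ for any fixed $k$.

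Finally, from the deterministic inequality $\mathcal{F}_n\leq \bigl(\sup_j \mathcal{L}_{I_j}\bigr)\bigl(\sup_j G_j\bigr)$ and Cauchy--Schwarz, one gets
$$
\Exp{\mathcal{F}_n^k}\leq \sqrt{\Exp{\bigl(\sup_{1\leq j\leq n}\mathcal{L}_{I_j}\bigr)^{2k}}}\,\sqrt{\Exp{\bigl(\sup_{1\leq j\leq n} G_j\bigr)^{2k}}}\leq K\,(\sqrt{\log n/c_n})^{k}\sqrt{\Exp{\sup_{1\leq j\leq n}\mathcal{L}_{I_j}^{2k}}}\;,
$$
which is precisely the announced $\sqrt{L_{2k}}$ bound; the in-probability statement follows from the overwhelming-probability control of $\sup_j G_j$. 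The main subtlety -- rather than a genuine obstacle -- is to keep the conditioning straight: $m_{F_{I_j}}$ must be the conditional mean (or median) of $F_{I_j}(X_{I_j^c})$ given $X_{I_j}$, so that once $X_{I_j}$ is frozen the quantity $F_{I_j}(X_{I_j^c})-m_{F_{I_j}}$ is a centered, convex, $\mathcal{L}_{I_j}$-Lipschitz function of the single free variable to which the hypothesis applies verbatim; the $\sqrt{\,\cdot\,}$ in the final bound is the inevitable price paid for allowing the Lipschitz constant to be random.
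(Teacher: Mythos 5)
Your proposal is correct and follows essentially the same route as the paper: normalize by the (conditionally deterministic) Lipschitz constant to obtain a $1$-Lipschitz convex function to which the concentration hypothesis applies conditionally on $X_{I_j}$, sum the resulting sub-Gaussian tails over $j$ and integrate as in Lemma \ref{lemma:controlLipFuncConcRVsLp}, then reintroduce $\sup_j {\cal L}_{I_j}$ by Cauchy--Schwarz (the paper phrases this as H\"older with exponent $2$, dividing directly by ${\cal L}=\sup_j{\cal L}_{I_j}$ and using ${\cal L}\geq {\cal L}_{I_j}$ inside the tail integral, which is trivially equivalent to your term-by-term normalization). No gaps.
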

\begin{proof}
We call ${\cal L}=\sup_i {\cal L}_i$.
By Holder's inequality, we have 
$$
\Exp{{\mathcal F_n}^k}=\Exp{({\mathcal F_n}^k/{\cal L}^k) {\cal L}^k}\leq \sqrt{\Exp{{\mathcal F_n}^{2k}/{\cal L}^{2k}}} \sqrt{\Exp{{\cal L}^{2k}}}\;.
$$
Let us call $\widetilde{\mathcal F}_n={\mathcal F_n}/{\cal L}$. As before, 
\begin{align*}
\Exp{\widetilde{\mathcal F}_n^k}&\leq u^k+\sum_{j=1}^n\int_{u}^{\infty}k x^{k-1}P(|F_{I_j}(X_{I_j^c})-m_{F_{I_j}}|\geq {\cal L} x)\;,\\
&\leq u^k+\sum_{j=1}^n\int_{u}^{\infty}k x^{k-1}P(|F_{I_j}(X_{I_j^c})-m_{F_{I_j}}|\geq L_{I_j} x)\;,\\
&=u^k+\sum_{j=1}^n\int_{u}^{\infty}k x^{k-1}\Exp{P(|F_{I_j}(X_{I_j^c})-m_{F_{I_j}}|\geq L_{I_j} x|X_{I_j})}\;.
\end{align*}
Now our assumptions guarantee that 
$$
P(|F_{I_k}(X_{I_k^c})-m_{F_{I_k}}|\geq L_{I_k} x|X_{I_k})\leq C_n \exp(-c_n x^2)\;,
$$
since $F_{I_k}/L_{I_k}$ is 1-Lipschitz (and independent of $X_{I_k^c}$).
We conclude that 
$$
\Exp{\widetilde{\mathcal F}_n^k}\leq u^k+n C_n \int_{u}^{\infty} k x^{k-1}\exp(-c_n x^2)\;.
$$
This is exactly the same situation as we had before and the conclusion follows. 
\end{proof}

\begin{lemma}\label{lemma:RandomQuadFormsBounds}
Suppose the assumptions of the previous Lemma are satisfied. 
Consider $Q_{I_j}=\frac{1}{n}X_{I_j^c}\trsp M_{I_j} X_{I_j^c}$, where $M$ is a random symmetric matrix depending only on $X_{I_j}$ whose largest eigenvalue is $\lambda_{max,I_j}$. Assume that $\scov{X_i}=\id_p$ and $nc_n\tendsto \infty$.
Then, we have in $L_k$, 
$$
\sup_{1\leq j \leq n} \left|Q_{I_j}-\frac{1}{n}\trace{M_{I_j}}\right|=\gO_{L_k}(\frac{\polyLog(n)}{\sqrt{nc_n}}\sup_{1\leq j\leq n} \lambda_{max,I_j})\;.
$$
The same bound holds when considering a single $Q_{I_j}$ without the $\polyLog(n)$ term.
\end{lemma}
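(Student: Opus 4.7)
The plan is to condition on $X_{I_j}$ (so that $M_{I_j}$ becomes deterministic), split $M_{I_j}$ into its positive and negative semidefinite parts, apply the convex concentration of Lemma~\ref{lemma:controlLipFuncConcRVsLp} to the square roots of the resulting PSD quadratic forms, and then square back up. The supremum is handled by Lemma~\ref{lemma:controlRandomLipFuncConcRVsLp}, which is tailored to random Lipschitz constants depending on the conditioning variables.

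First, write $M_{I_j}=M_{I_j}^{+}-M_{I_j}^{-}$ with $M_{I_j}^{\pm}\succeq 0$ and $\opnorm{M_{I_j}^{\pm}}\le\lambda_{max,I_j}$, so $Q_{I_j}=Q_{I_j}^{+}-Q_{I_j}^{-}$ with $Q_{I_j}^{\pm}=\frac{1}{n}X_{I_j^c}\trsp M_{I_j}^{\pm}X_{I_j^c}$, and it suffices to bound each piece. Conditional on $X_{I_j}$ the map $x\mapsto F_{I_j}^{\pm}(x)=\sqrt{x\trsp M_{I_j}^{\pm}x/n}$ is a seminorm on $\mathbb{R}^p$, hence convex, with Euclidean Lipschitz constant exactly $\sqrt{\lambda_{max}(M_{I_j}^{\pm})/n}\le\sqrt{\lambda_{max,I_j}/n}$. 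Since $X_{I_j^c}$ is independent of $M_{I_j}^{\pm}$ and satisfies assumption \textbf{O4}, applying Lemma~\ref{lemma:controlLipFuncConcRVsLp} conditionally on $X_{I_j}$ gives, for any fixed $k$,
$$
\bigl|F_{I_j}^{\pm}(X_{I_j^c})-m_{F_{I_j}^{\pm}}\bigr|=\gO_{L_k}\!\left(\tfrac{1}{\sqrt{c_n}}\sqrt{\lambda_{max,I_j}/n}\right),
$$
and in particular $\var(F_{I_j}^{\pm}\,|\,X_{I_j})=\gO(\lambda_{max,I_j}/(nc_n))$.

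To pass from $F_{I_j}^{\pm}$ to $(F_{I_j}^{\pm})^{2}=Q_{I_j}^{\pm}$ I use the identity
$$
(F_{I_j}^{\pm})^{2}-m_{F_{I_j}^{\pm}}^{2}=\bigl(F_{I_j}^{\pm}-m_{F_{I_j}^{\pm}}\bigr)\bigl(F_{I_j}^{\pm}+m_{F_{I_j}^{\pm}}\bigr).
$$
Because $\scov{X_i}=\id_p$, the conditional expectation satisfies $\Exp{(F_{I_j}^{\pm})^{2}\,|\,X_{I_j}}=\trace(M_{I_j}^{\pm})/n$, hence $m_{F_{I_j}^{\pm}}\le\sqrt{\trace(M_{I_j}^{\pm})/n}\le\sqrt{(p/n)\lambda_{max,I_j}}=\gO(\sqrt{\lambda_{max,I_j}})$ using \textbf{O1}. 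Combined with the concentration bound this yields
$$
\bigl|Q_{I_j}^{\pm}-m_{F_{I_j}^{\pm}}^{2}\bigr|=\gO_{L_k}\!\left(\tfrac{\lambda_{max,I_j}}{\sqrt{nc_n}}\right).
$$
The gap $|m_{F_{I_j}^{\pm}}^{2}-\trace(M_{I_j}^{\pm})/n|$ is itself at most $\var(F_{I_j}^{\pm})+|\Exp{F_{I_j}^{\pm}}-m_{F_{I_j}^{\pm}}|\,(\Exp{F_{I_j}^{\pm}}+m_{F_{I_j}^{\pm}})$, which is controlled by the same variance estimate and so is of the same (or smaller) order. Summing the $\pm$ contributions, together with $\trace(M_{I_j})=\trace(M_{I_j}^{+})-\trace(M_{I_j}^{-})$, yields the single-index bound $|Q_{I_j}-\trace(M_{I_j})/n|=\gO_{L_k}(\lambda_{max,I_j}/\sqrt{nc_n})$, with no $\polyLog(n)$ factor.

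For the supremum over $j$, I invoke Lemma~\ref{lemma:controlRandomLipFuncConcRVsLp} with $F_{I_j}=F_{I_j}^{\pm}$ and random Lipschitz constants ${\cal L}_{I_j}=\sqrt{\lambda_{max,I_j}/n}$, which gives
$$
\sup_{1\le j\le n}\bigl|F_{I_j}^{\pm}(X_{I_j^c})-m_{F_{I_j}^{\pm}}\bigr|=\gO_{\sqrt{L_{2k}}}\!\left(\sqrt{\tfrac{\polyLog(n)}{nc_n}}\,\sup_{1\le j\le n}\sqrt{\lambda_{max,I_j}}\right).
$$
Feeding this uniform bound into the squaring step above, and noting that $\sup_j m_{F_{I_j}^{\pm}}=\gO(\sup_j\sqrt{\lambda_{max,I_j}})$, produces the claimed rate $\polyLog(n)/\sqrt{nc_n}$ times $\sup_j\lambda_{max,I_j}$ uniformly over $j$. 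The main technical care is the squaring step: the product $(F-m)(F+m)$ would, if treated carelessly, introduce a spurious $\sqrt{p/n}$ factor from $m$; using $p/n=\gO(1)$ via \textbf{O1} is exactly what lets that factor be absorbed into the constant so that the final dependence is on $\lambda_{max,I_j}$ alone, matching the statement of the lemma.
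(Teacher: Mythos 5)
Your proof is correct and follows essentially the same route as the paper's: apply the concentration lemma to the convex Lipschitz map $x\mapsto\sqrt{x\trsp M_{I_j} x/n}$ (with Lipschitz constant $\sqrt{\lambda_{max,I_j}/n}$, uniformized via Lemma \ref{lemma:controlRandomLipFuncConcRVsLp}), square back up through $(F-m)(F+m)$ using $m=\gO(\sqrt{(p/n)\,\lambda_{max,I_j}})$ and \textbf{O1}, and control $|m^2-\trace{M_{I_j}}/n|$ by a variance-type estimate (the paper invokes Proposition 1.9 of \cite{ledoux2001} for that step). The only departure is your splitting $M=M^{+}-M^{-}$, which the paper omits because in all its applications $M\succeq 0$; be aware that for a genuinely indefinite $M$ your claim $\opnorm{M^{-}}\le\lambda_{max,I_j}$ fails (one has $\opnorm{M^{-}}=\max(0,-\lambda_{\min})$), so in that generality the bound should be stated with $\opnorm{M_{I_j}}$ in place of $\lambda_{max,I_j}$.
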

\begin{proof}
Lemma \ref{lemma:controlRandomLipFuncConcRVsLp} applies to $\sqrt{Q_{I_j}}$ and $\sup_{1\leq j \leq n} |\sqrt{Q_{I_j}}-m_{\sqrt{Q_{I_j}}}|$. The corresponding 
Lipschitz constant if of course $\sqrt{\lambda_{max,I_j}/n}$.

So all we need to do is show that we can go from this control to the control of $\sup_{1\leq j \leq n} |Q_{I_j}-\frac{1}{n}\trace{M_{I_j}}|$.

Of course, 
$$
|Q_{I_j}-\frac{1}{n}\trace{M_{I_j}}|\leq |Q_{I_j}-m^2_{\sqrt{Q_{I_j}}}|+|m^2_{\sqrt{Q_{I_j}}}-\frac{1}{n}\trace{M_{I_j}}|\;.
$$
The idea from there is simply to use the fact that for $a$ and $b$ non-negative, $(a+b)^{k}\leq 2^{k-1}(a^k+b^k)$.
Using Proposition 1.9 in \citet{ledoux2001}, we know that 
$$
|m^2_{\sqrt{Q_{I_j}}}-\frac{1}{n}\trace{M_{I_j}}|\leq \frac{C_n}{nc_n}\lambda_{max}(M_{I_j})\;.
$$
On the other hand, 
$$
|Q_{I_j}-m^2_{\sqrt{Q_{I_j}}}|=\left|\sqrt{Q_{I_j}}-m_{\sqrt{Q_{I_j}}}\right|\left|\sqrt{Q_{I_j}}+m_{\sqrt{Q_{I_j}}}\right|\leq \left|\sqrt{Q_{I_j}}-m_{\sqrt{Q_{I_j}}}\right|^2+
2\left|\sqrt{Q_{I_j}}-m_{\sqrt{Q_{I_j}}}\right| m_{\sqrt{Q_{I_j}}}\;,
$$
since $m_{\sqrt{Q_{I_j}}}\geq 0$.

Therefore, 
$$
\sup_{1\leq j \leq n} |Q_{I_j}-m^2_{\sqrt{Q_{I_j}}}|\leq \sup_{1\leq j \leq n}\left|\sqrt{Q_{I_j}}-m_{\sqrt{Q_{I_j}}}\right|^2+2\left[\sup_{1\leq j \leq n}
\left|\sqrt{Q_{I_j}}-m_{\sqrt{Q_{I_j}}}\right|\right] \left[\sup_{1\leq j \leq n}  m_{\sqrt{Q_{I_j}}}\right]
$$

Lemma \ref{lemma:controlRandomLipFuncConcRVsLp} gives us control of $\sup_{1\leq j \leq n}\left|\sqrt{Q_{I_j}}-m_{\sqrt{Q_{I_j}}}\right|$  in $L_{2k}$ and therefore control of 
$\sup_{1\leq j \leq n}\left|\sqrt{Q_{I_j}}-m_{\sqrt{Q_{I_j}}}\right|^2$ in $L_{2k}$  with a bound of the form $\frac{\polyLog(n)}{(nc_n)}\sup_{1\leq j \leq n} \lambda_{max}(M_{I_j})$.

The result will therefore be shown provided we control $\left[\sup_{1\leq j \leq n}
\left|\sqrt{Q_{I_j}}-m_{\sqrt{Q_{I_j}}}\right|\right] \left[\sup_{1\leq j \leq n}  m_{\sqrt{Q_{I_j}}}\right]$. By using Holder's inequality and our control of 
$\left[\sup_{1\leq j \leq n}
\left|\sqrt{Q_{I_j}}-m_{\sqrt{Q_{I_j}}}\right|\right]$ in $L_{2k}$, it is clear that the only issue remaining is control of $\left[\sup_{1\leq j \leq n}  m_{\sqrt{Q_{I_j}}}\right]$ in $L_{2k}$.

Since $m_{\sqrt{Q_{I_j}}}=\Expj{X_{I_j}}{\sqrt{X_{I_j^c}\trsp M_{I_j} X_{I_j^c}/n}}\leq \sqrt{\Expj{X_{I_j}}{X_{I_j^c}\trsp M_{I_j} X_{I_j^c}/n}}=\sqrt{\trace{M_{I_j}}/n}$, since $\scov{X_{I_j}}=\id_p$, we see that 
$$
\left[\sup_{1\leq j \leq n}  m_{\sqrt{Q_{I_j}}}\right]\leq \sqrt{p/n} \sup_{1\leq j \leq n} \sqrt{\lambda_{\max,I_j}}\;.
$$
Therefore, 
$$
\left[\sup_{1\leq j \leq n}
\left|\sqrt{Q_{I_j}}-m_{\sqrt{Q_{I_j}}}\right|\right] \left[\sup_{1\leq j \leq n}  m_{\sqrt{Q_{I_j}}}\right]\leq K\frac{\polyLog(n)\sqrt{p/n}}{\sqrt{nc_n}}\sup_{1\leq j \leq n}\lambda_{\max,I_j}\text{ in } L_k\;,
$$
provided all the random quantities we work with have $2k$ moments. 

The conclusions of the Lemma follow by recalling our assumption that $p/n$ remains bounded and using the fact that $1/c_n\geq K/\sqrt{c_n}$ in the situations we are considering, i.e $c_n$ bounded but possibly going to zero. 
\end{proof}

\begin{lemma}\label{lemma:controlApproxQuadFormSDepXi}
Suppose $S_i=\frac{1}{n}\sum_{j\neq i}^n d_j X_j X_j\trsp$ where $\{d_j\}_{j=1}^n$ depends on $\{X_i\}_{i=1}^n$. Suppose we can find $\{\tilde{d}_{j,(i)}\}_{j\neq i}$ independent of $X_i$ such that $\sup_{j\neq i}|d_j-\tilde{d}_{j,(i)}|\leq \delta_n(i)$. Then we have 
$$
\left|\frac{1}{n}X_i\trsp (S_i+\tau \id_p)^{-1}X_i-\frac{1}{n} \trace{(S_i+\tau \id_p)^{-1}}\right|
=\gO_{L_k}\left(\delta_n(i)\opnorm{\SigmaHat}\vee \frac{1}{nc_n\tau}\right)\;.
$$

If the same can be done with all $i$'s and $\sup_{1\leq i\leq n}\delta_n(i)\leq K_n$, we have 
$$
\sup_i\left|\frac{1}{n}X_i\trsp (S_i+\tau \id_p)^{-1}X_i-\frac{1}{n} \trace{(S_i+\tau \id_p)^{-1}}\right|=\gO_{L_k}\left(
(1+\frac{\polyLog(n)}{\sqrt{nc_n}})K_n\opnorm{\SigmaHat}\vee \frac{\polyLog(n)}{\sqrt{nc_n}\tau}
\right)
$$
\end{lemma}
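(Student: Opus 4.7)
The strategy is to replace $S_i$ by an approximant $\widetilde{S}_i$ that is independent of $X_i$, apply Lemma~\ref{lemma:RandomQuadFormsBounds} to the resulting quadratic form, and control the substitution error via the first resolvent identity combined with an operator-norm bound on $S_i-\widetilde{S}_i$.

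Concretely, set $\widetilde{S}_i=\frac{1}{n}\sum_{j\neq i}\tilde{d}_{j,(i)}X_jX_j\trsp$. By hypothesis $\widetilde{S}_i$ is independent of $X_i$, and $\opnorm{(\widetilde{S}_i+\tau\id_p)^{-1}}\leq 1/\tau$. Since $S_i-\widetilde{S}_i=\frac{1}{n}\sum_{j\neq i}(d_j-\tilde{d}_{j,(i)})X_jX_j\trsp$, the elementary identity $v\trsp\bigl(\frac{1}{n}\sum_j e_jX_jX_j\trsp\bigr)v=\sum_j e_j(X_j\trsp v)^2/n$ yields, for arbitrary signs of the scalars $e_j$,
$$
\opnorm{S_i-\widetilde{S}_i}\leq \bigl(\max_{j\neq i}|d_j-\tilde{d}_{j,(i)}|\bigr)\,\opnorm{\SigmaHat}\leq \delta_n(i)\opnorm{\SigmaHat}\;.
$$
The first resolvent identity then gives $\opnorm{(S_i+\tau\id_p)^{-1}-(\widetilde{S}_i+\tau\id_p)^{-1}}\leq \delta_n(i)\opnorm{\SigmaHat}/\tau^2$, and consequently
$$
\Bigl|\tfrac{1}{n}X_i\trsp(S_i+\tau\id_p)^{-1}X_i-\tfrac{1}{n}X_i\trsp(\widetilde{S}_i+\tau\id_p)^{-1}X_i\Bigr|\leq \frac{\norm{X_i}^2}{n}\frac{\delta_n(i)\opnorm{\SigmaHat}}{\tau^2}\;,
$$
together with an analogous bound, of order $\frac{p}{n}\delta_n(i)\opnorm{\SigmaHat}/\tau^2$, for the two traces.

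On the other hand, because $(\widetilde{S}_i+\tau\id_p)^{-1}$ is independent of $X_i$ with largest eigenvalue at most $1/\tau$, the single-form version of Lemma~\ref{lemma:RandomQuadFormsBounds} yields
$$
\Bigl|\tfrac{1}{n}X_i\trsp(\widetilde{S}_i+\tau\id_p)^{-1}X_i-\tfrac{1}{n}\trace{(\widetilde{S}_i+\tau\id_p)^{-1}}\Bigr|=\gO_{L_k}\Bigl(\frac{1}{\tau\sqrt{nc_n}}\Bigr)\;.
$$
Triangle-inequalizing these three bounds and using $\norm{X_i}^2/n=\gO_{L_k}(1)$ and $p/n=\gO(1)$ under our assumptions gives the first displayed inequality. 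For the uniform statement, the same scheme applies: replace $\delta_n(i)$ by $K_n$ in the resolvent step, use the full supremum form of Lemma~\ref{lemma:RandomQuadFormsBounds} which pays a $\polyLog(n)/\sqrt{nc_n}$ price for the quadratic-form deviation, and absorb $\sup_i \norm{X_i}^2/n=\gO_{L_k}(1)$ via Hölder's inequality against $K_n\opnorm{\SigmaHat}$ (which is controlled in $L_{k'}$ for any $k'$ by Lemma~\ref{lemma:controlSpectralNorms} and our hypothesis that $K_n$ has enough moments).

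The only non-routine point is the operator-norm reduction $\opnorm{S_i-\widetilde{S}_i}\leq\delta_n(i)\opnorm{\SigmaHat}$: the coefficients $d_j-\tilde{d}_{j,(i)}$ may have mixed signs, so one cannot write $S_i-\widetilde{S}_i$ as a PSD multiple of a part of $\SigmaHat$; the quadratic-form argument above circumvents this issue cleanly. Everything else is bookkeeping around Hölder and the resolvent identity.
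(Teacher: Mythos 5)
Your proposal is correct and follows essentially the same route as the paper's own proof: introduce $\widetilde{S}_i=\frac{1}{n}\sum_{j\neq i}\tilde{d}_{j,(i)}X_jX_j\trsp$, control the substitution error for both the quadratic form and the trace via the first resolvent identity and the bound $\opnorm{S_i-\widetilde{S}_i}\leq \delta_n(i)\opnorm{\SigmaHat}$, and apply Lemma \ref{lemma:RandomQuadFormsBounds} to the form in $\widetilde{S}_i$, which is independent of $X_i$. Your explicit mixed-sign argument for the operator-norm step and your $\frac{p}{n}\opnorm{\SigmaHat}$ bound for the trace difference are harmless cosmetic variants of what the paper does (it writes the diagonal-matrix factorization and an explicit sum over $\norm{X_j}^2/n$, respectively), and like the paper's own proof you land on the rate $1/(\tau\sqrt{nc_n})$ rather than the $1/(nc_n\tau)$ displayed in the lemma statement.
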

\begin{proof}
We call $\widetilde{S}_i=\frac{1}{n}\sum_{j\neq i} \tilde{d}_{j,(i)} X_j X_j\trsp$. 
Recall the first resolvent identity, i.e $A^{-1}-B^{-1}=A^{-1}(B-A)B^{-1}$. 
Clearly, by applying this identity of $S_i+\tau \id_p$ and $\widetilde{S}_i+\tau \id_p$, we have 
$$
\left|\frac{1}{n}X_i\trsp (S_i+\tau \id_p)^{-1}X_i-\frac{1}{n}X_i\trsp (\widetilde{S}_i+\tau \id_p)^{-1}X_i\right|\leq \frac{\norm{X_i}^2}{n}\frac{1}{\tau^2}\delta_n(i) \opnorm{\SigmaHat}\;,
$$
by using the fact that $\opnorm{\frac{1}{n}\sum_{j\neq i}(\tilde{d}_{j,(i)}-d_j) X_jX_j\trsp}\leq \sup_{j\neq i}|\tilde{d}_{j,(i)}-d_j|\opnorm{\SigmaHat}$.
	
Since $\widetilde{S}_i$ is independent of $X_i$, we can apply the results from Lemma \ref{lemma:RandomQuadFormsBounds} to see that 
$$
\frac{1}{n}X_i\trsp (\widetilde{S}_i+\tau \id_p)^{-1}X_i-\frac{1}{n}\trace{(\widetilde{S}_i+\tau \id_p)^{-1}}=\gO_{L_k}(\frac{1}{\sqrt{nc_n}\tau})\;.
$$
On the other hand, by making use again of the first resolvent identity, we see that 
\begin{align*}
\left|\frac{1}{n}\trace{(S_i+\tau \id_p)^{-1}-(\widetilde{S}_i+\tau \id_p)^{-1}}\right|&=\left|\frac{1}{n}\trace{(S_i+\tau \id_p)^{-1}\frac{1}{n}\sum_{j\neq i}(\tilde{d}_{j,(i)}-d_j) X_jX_j\trsp (\widetilde{S}_i+\tau \id_p)^{-1}}\right|\;,\\
&=\left|\frac{1}{n^2}\sum_{j\neq i} (\tilde{d}_{j,(i)}-d_j) X_j\trsp (\widetilde{S}_i+\tau \id_p)^{-1}(S_i+\tau \id_p)^{-1} X_j\right|\;,\\
&\leq \frac{1}{n^2}\sum_{j\neq i} \left|\tilde{d}_{j,(i)}-d_j\right|\frac{\norm{X_j}^2}{\tau^2}\;,\\
&\leq \delta_n(i) \frac{1}{n}\sum_{i=1}^n \frac{\norm{X_j}^2}{n \tau^2}
\end{align*}
We conclude that 
$$
\left|\frac{1}{n}X_i\trsp (S_i+\tau \id_p)^{-1}X_i-\frac{1}{n} \trace{(S_i+\tau \id_p)^{-1}}\right|=\gO_{L_k}\left( 
\frac{\norm{X_i}^2}{n}\frac{1}{\tau^2}\delta_n(i) \opnorm{\SigmaHat}+\frac{1}{\sqrt{nc_n} \tau}+\delta_n(i)\frac{1}{n}\sum_{i=1}^n \frac{\norm{X_j}^2}{n \tau^2}\right)\;.
$$

The result for the $\sup$ follows by the same technique and adjusting for taking $\sup$ in the various approximations. 
\end{proof}
\subsection*{On the spectral norm of covariance matrices}
In this subsection, we show that under our initial concentration assumptions, we can control $\opnorm{\SigmaHat}$. These results are very likely known but we did not find a reference covering precisely the same question we consider. The proof is a simple adaption of the well-known $\eps$-net argument explained e.g in \cite{TalagrandSpinGlassesBook03}, Appendix A.4.

\begin{lemma}\label{lemma:controlSpectralNorms}
Suppose $X_i$'s are independent random vectors in $\mathbb{R}^p$, satisfying our concentration assumptions in \textbf{O4}, and having mean 0 and covariance $\id_p$. Let $\SigmaHat=\frac{1}{n}\sum_{i=1}^n X_i X_i\trsp$. Then,  
$$
\norm{\SigmaHat}=\gO_P(c_n^{-1/2})\;. 
$$
The results hold also in $L_k$.
\end{lemma}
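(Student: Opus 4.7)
The plan is the classical $\epsilon$-net argument combined with convex concentration on linear forms, essentially the template of Appendix~A.4 in \cite{TalagrandSpinGlassesBook03} that the paper itself points to. First, since $\SigmaHat$ is symmetric positive semi-definite, I will fix a $1/4$-net $\mathcal{N}\subset S^{p-1}$ with $|\mathcal{N}|\leq 9^p$ and use the standard estimate $\opnorm{\SigmaHat}\leq 2\sup_{v\in\mathcal{N}} v\trsp\SigmaHat v$. This reduces matters to a uniform control, over finitely many unit vectors $v$, of the quadratic forms $v\trsp \SigmaHat v = n^{-1}\|Xv\|^2$.

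Next, for $v\in S^{p-1}$ fixed I set $G_v(X_1,\ldots,X_n) = n^{-1/2}\|Xv\|$. This is convex (a norm composed with a linear map) and $n^{-1/2}$-Lipschitz in the Euclidean metric on $\mathbb{R}^{np}$: the block-gradient $\partial_{X_i}G_v$ has squared norm $(v\trsp X_i)^2/(n\|Xv\|^2)$, which sums in $i$ to $1/n$. Because the $X_i$'s are independent and each satisfies the convex concentration property \textbf{O4} with constants $(C_n,c_n)$, the tensorization statement the paper already invokes in the discussion after \textbf{P1}, namely Corollary~4.10 of \cite{ledoux2001}, gives that the product measure on $\mathbb{R}^{np}$ itself satisfies convex concentration with the same $c_n$ up to absolute constants. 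Combined with the $n^{-1/2}$-Lipschitz bound this yields
\[
P\bigl(|G_v - m_{G_v}| > t\bigr) \leq C'_n \exp(-c_n n t^2),
\]
while $m_{G_v}\leq (\Exp{G_v^2})^{1/2}=\|v\|=1$ since $\scov{X_i}=\id_p$.

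I then choose $t_\star = K\sqrt{p/(n c_n)}$, which is $\gO(c_n^{-1/2})$ as $p/n$ stays bounded, and union-bound over $\mathcal{N}$: the failure probability is at most $9^p C'_n \exp(-K^2 p \log 9)$, which is $\leq n^{-d}$ for any prescribed $d$ once $K$ is taken large enough. Squaring and using the net bound yields $\opnorm{\SigmaHat}=\gO_P(c_n^{-1/2})$, and integrating the tail (subgaussian in $s$ at radius $s t_\star$) gives the same bound in $L_k$ for every fixed $k$.

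The only conceptual point that requires care is the tensorization step that upgrades single-block convex concentration \textbf{O4} to a joint statement on $\mathbb{R}^{np}$; since Corollary~4.10 of \cite{ledoux2001} is available and the paper already uses it in discussing the compatibility of \textbf{O4} and \textbf{P1}, this is routine rather than a genuine obstacle. A minor quantitative subtlety is that the single-vector argument naturally produces a bound on $\sqrt{\opnorm{\SigmaHat}}=\opnorm{X}/\sqrt{n}$; when $c_n\to 0$ the stated $c_n^{-1/2}$ rate is therefore to be read modulo the $\polyLog$ tolerance in which the paper operates throughout.
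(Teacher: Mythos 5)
Your Steps 1, 2, 4 and 5 are fine, but Step 3 --- the tensorization --- is a genuine gap, and it is precisely the step the paper's own proof is engineered to avoid. Assumption \textbf{O4} is a convex-concentration hypothesis on each $X_i$ \emph{separately}; whether this property is preserved under taking products (i.e.\ whether the law of the concatenated vector $(X_1,\dots,X_n)\in\mathbb{R}^{np}$ then satisfies convex concentration with comparable constants) is not a routine fact --- it is a well-known delicate point, and it is not what Corollary~4.10 of \cite{ledoux2001} says. That corollary gives convex concentration for product measures with \emph{bounded} coordinates (Talagrand's inequality); the paper cites it only to exhibit examples of a single $X_i$ satisfying \textbf{O4}, not as a device for upgrading \textbf{O4} to a joint statement. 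Your function $G_v=n^{-1/2}\norm{Xv}$ is a genuinely joint convex function of all $n$ blocks, so the tail $\exp(-c_n n t^2)$ you need is exactly this unavailable joint concentration. What \emph{does} follow from \textbf{O4} block by block (say via a martingale decomposition, each block being $n^{-1/2}$-Lipschitz so each increment has variance proxy of order $(nc_n)^{-1}$, summed over $n$ blocks) is only $P(|G_v-m|>t)\lesssim \exp(-c\,c_n t^2)$; after the union bound over $9^p$ points this forces $t\sim\sqrt{p/c_n}$ and yields $\opnorm{\SigmaHat}=\gO(p/c_n)$, which is far too weak. Your argument is correct as written only under extra structure (bounded entries, so that the $np$ coordinates are jointly a bounded product measure, or Gaussian $X_i$), which the lemma does not assume.

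The paper's route sidesteps the issue by linearizing completely: it writes $\opnorm{X/\sqrt n}=\sup_{\norm{u}=\norm{v}=1}u\trsp Xv/\sqrt n$ with $u\trsp Xv=\sum_i u_i(X_i\trsp v)$, uses a \emph{double} net over $u\in S^{n-1}$ and $v\in S^{p-1}$, applies \textbf{O4} only to the linear (hence convex $1$-Lipschitz) forms $x\mapsto x\trsp v$ of a single $X_i$ to get that each $X_i\trsp v$ is subGaussian with parameter $\gO(1/c_n)$, and then uses independence and a moment-generating-function computation to conclude that the weighted sum is subGaussian with the same parameter. Only single-vector linear concentration plus independence enter. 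If you want to keep your single net over $v$, the fix is to control $v\trsp\SigmaHat v=n^{-1}\sum_i(X_i\trsp v)^2$ directly as a sum of independent subexponential variables via Bernstein's inequality, rather than through a joint Lipschitz function. The quantitative slack you flag at the end ($c_n^{-1}$ for $\opnorm{\SigmaHat}$ versus the stated $c_n^{-1/2}$, reconciled by $1/c_n=\gO(\polyLog(n))$) is shared by the paper's own argument and is not the problem; the tensorization is.
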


\begin{proof}
We study the largest singular value, $\sigma_1$ of the matrix $X/\sqrt{n}$, where the $i$-th row of $X$ is $X_i$. Of course, 
$$
\sigma_1(X/\sqrt{n})=\sup_{u,v,\norm{u}=1,\norm{v}=1} \frac{1}{\sqrt{n}}u\trsp X v\;.
$$
Note that 
$$
u\trsp Xv=\sum_{i=1}^n u_i (X_i\trsp v)\;.
$$
Consider first the case where $c_n=1$. Under our assumptions, $X_i\trsp v$ are independent subGaussian random vectors, with mean 0. Note that $\var{X_i\trsp v}=1$ if $\scov{X_i}=1$ and $\norm{v}=1$. Computing the moment generating function of $u\trsp X v$, we see that this random variable is itself subGaussian and has variance 1. Therefore, we have for all $t$, and constants $c_1$ and $c_2$, 
$$
P(|u\trsp X v|>t)\leq c_1 \exp(-c_2 t^2)\;.
$$
The $\eps$-net argument given in the proof of Lemma A.4.1 in \cite{TalagrandSpinGlassesBook03} then can be applied and the conclusions of that Lemma reached. (A slight adaption is needed to handle the fact that $u\in \mathbb{R}^n$ and $v\in \mathbb{R}^p$ but it is completely trivial and omitted). The fact that the results hold in $L_k$ is a simple consequence of the proof. 

In the case where $c_n\neq 1$, we just need to note that the moment generating function of $u\trsp X v$ is smaller than that of a Gaussian random variable with variance $1/c_n$. The result follows immediately.   
\end{proof}
\section{Miscellaneous results}
\subsection{An analytic remark}
One of our assumptions concerns the existence and uniqueness of a solution of the equation $F(x)=0$, where
$$
F(x)=\frac{p}{n}-\tau x -1 +\Exp{(\prox_x(\rho))'(W)}
$$
where $W$ is a random variable and $(\prox_x(\rho))'(t)=\frac{\partial }{\partial t} \prox_x(\rho)(t)=\frac{1}{1+x\psi'(\prox_x(\rho)(t))}$.

We now show that under mild conditions on $W$ this equation has a unique solution. This guarantees that our assumptions are not terribly strong and in particular apply to problems of interest to statisticians.

\begin{lemma}\label{lemma:suffCondKeyEqnInCHasUniqueSolution}
Suppose that $W$ has a smooth density $f$ with $\sgn(f'(x))=-\sgn(x)$. Suppose further that $\lim_{|t|\tendsto \infty} tf(t)=0$ and that $\sgn(\psi(x))=\sgn(x)$. Then, if 
$$
F(x)=\frac{p}{n}-\tau x -1 +\Exp{(\prox_x(\rho))'(W)}\;,
$$
the equation $F(x)=0$ has a unique solution. 
\end{lemma}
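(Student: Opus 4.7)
The plan is to show that $F$ is strictly decreasing on $[0,\infty)$, with $F(0)>0$ and $F(x)\tendsto -\infty$ as $x\tendsto \infty$; the intermediate value theorem then immediately yields a unique root.

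The heart of the argument is to rewrite $\Exp{(\prox_x(\rho))'(W)}$ in a form that makes its $x$-dependence transparent. Since $(\prox_x(\rho))'(t)=\frac{d}{dt}\prox_x(\rho)(t)$ by Lemma \ref{lemma:simpleObsProx}, integration by parts gives
\[
\Exp{(\prox_x(\rho))'(W)} \;=\; \bigl[\prox_x(\rho)(t)\,f(t)\bigr]_{-\infty}^{\infty} \,-\, \int \prox_x(\rho)(t)\,f'(t)\,dt.
\]
The boundary term vanishes: the $1$-Lipschitz property of the prox combined with $\prox_x(\rho)(0)=0$ (Lemma \ref{lemma:ValueProxat0}) gives $|\prox_x(\rho)(t)|\leq |t|$, and then the assumption $tf(t)\tendsto 0$ kills the product at $\pm\infty$. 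Next, combining $\sgn(\prox_x(\rho)(t))=\sgn(t)$ (Lemma \ref{lemma:ValueProxat0}) with $\sgn(f'(t))=-\sgn(t)$, the remaining integrand satisfies $-\prox_x(\rho)(t)\,f'(t)=|\prox_x(\rho)(t)|\cdot|f'(t)|\geq 0$ pointwise, so
\[
\Exp{(\prox_x(\rho))'(W)} \;=\; \int |\prox_x(\rho)(t)|\cdot |f'(t)|\,dt.
\]

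Monotonicity in $x$ follows from the observation that $x\mapsto |\prox_x(\rho)(t)|$ is strictly decreasing for every $t\neq 0$: by Lemma \ref{lemma:DiffProxRespectc}, $\partial_x\prox_x(\rho)(t) = -\psi(\prox_x(\rho)(t))/(1+x\psi'(\prox_x(\rho)(t)))$ has sign $-\sgn(t)$ (the denominator is positive and $\sgn(\psi(\prox_x(\rho)(t)))=\sgn(\prox_x(\rho)(t))=\sgn(t)$), so the prox moves monotonically toward $0$ as $x$ grows. Since $|f'(t)|>0$ for every $t\neq 0$ by hypothesis, this yields a strict decrease of the integral, and combined with the $-\tau x$ term we get $F$ strictly decreasing. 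For the endpoints, $\prox_0(\rho)(t)=t$ yields $(\prox_0(\rho))'\equiv 1$ and thus $F(0)=p/n>0$, while $\Exp{(\prox_x(\rho))'(W)}\in[0,1]$ together with $-\tau x\tendsto -\infty$ forces $F(x)\tendsto -\infty$.

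The main technical wrinkle is the integration-by-parts step: one must verify that the boundary term decays, which is precisely where the hypothesis $tf(t)\tendsto 0$ enters, and that the dominating bound $|\prox_x(\rho)(t)|\cdot|f'(t)|\leq |t|\cdot|f'(t)|$ is integrable (a further integration by parts shows $\int |t||f'(t)|\,dt = \int f(t)\,dt = 1$), so that dominated convergence may be invoked both to justify the rewriting and to pass limits under the integral when checking continuity of $F$. Everything else reduces to a direct computation.
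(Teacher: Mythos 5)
Your proof is correct and follows essentially the same route as the paper's: integrate by parts using $tf(t)\to 0$ to rewrite $\Exp{(\prox_x(\rho))'(W)}$ as $-\int \prox_x(\rho)(t)f'(t)\,dt$, then exploit $\sgn(\psi(\prox_x(\rho)(t)))=\sgn(t)$ and $\sgn(f'(t))=-\sgn(t)$ to show this quantity is non-increasing in $x$, so that $F$ is strictly decreasing with $F(0)=p/n>0$ and $F(x)\to-\infty$. The only cosmetic difference is that the paper differentiates under the integral sign to conclude $G'(x)\leq 0$ directly, whereas you argue pointwise monotonicity of $x\mapsto|\prox_x(\rho)(t)|$ via the same derivative formula; your explicit integrability check $\int |t|\,|f'(t)|\,dt=1$ is a nice additional piece of bookkeeping but does not change the argument.
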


\begin{proof}
We call 
$$
G(x)\triangleq \Exp{(\prox_x(\rho))'(W)}\;.
$$
Of course, 
$$
\Exp{(\prox_x(\rho))'(W)}=\int (\prox_x(\rho))'(t) f(t) dt.
$$
Using contractivity of the proximal mapping (see \cite{MoreauProxPaper65}) we see that $\lim_{|t|\tendsto \infty} \prox_x(\rho)(t)f(t)=0$ under our assumptions. 

Integrating the previous equation by parts, we see that 
$$
\Exp{(\prox_x(\rho))'(W)}=-\int (\prox_x(\rho))(t) f'(t) dt\;.
$$
To compute $G'(x)$, we differentiate under the integral sign (under our assumptions the conditions of Theorem 9.1 in \cite{durrett96} are satisfied) to get 
$$
G'(x)=\int \frac{\psi(\prox_x(\rho)(t)) f'(t)}{1+x\psi'(\prox_x(\rho)(t))}dt\;.
$$
Under our assumptions, $\sgn(\psi(\prox_x(\rho)(t)))=\sgn(t)$ and $\sgn(f'(t))=-\sgn(t)$, so that 
$$
\forall t\neq 0, \sgn(\psi(\prox_x(\rho)(t)) f'(t))=-1 \;.
$$
Since the denominator of the function we integrate is positive, we conclude that 
$$
G'(x)\leq 0\;.
$$
Since $F'(x)=-\tau+G'(x)$, we see that $F'(x)<0$. Therefore $F$ is a decreasing function on $\mathbb{R}_+$.
Of course, $F(0)=p/n$ and $\lim_{x\tendsto \infty} F(x)=-\infty$. So we conclude that the equation $F(x)=0$ has a unique root.
\end{proof}
\textbf{Remark:} the conditions on the density of $W$ are satisfied in many situations. For instance if $W=\eps+rZ$, where $\eps$ is symmetric about 0 and log-concave,  $Z$ is ${\cal N}(0,1)$ and $r>0$, it is clear that the density of $W$ satisfies the conditions of our lemma. Similar results hold under weaker assumptions on $\eps$ of course but since the paper is already a bit long, we do not dwell on these issues which are well-known in the theory of log-concave functions (see e.g \cite{KarlinTotalPositivity68}, \cite{PrekopaLogConcaveSzeged73} and \cite{IbragimovLogConcave1956}). 
\subsection{A linear algebraic remark}
We have the following lemma. 
\begin{lemma}\label{lemma:impactSizeAugmentationOnTrace}
Suppose the $p\times p$ matrix $A$ is positive semi-definite and 
$$
A=\begin{pmatrix} 
\Gamma & v \\
v\trsp& a
\end{pmatrix}\;.
$$
Here $a\in \mathbb{R}$. 
Let $\tau$ be a strictly positive real. Call $\Gamma_\tau=\Gamma+\tau \id_{p-1}$. 
Then we have 
$$
\trace{(A+\tau)^{-1}}=\trace{\Gamma_\tau^{-1}}+\frac{1+v\trsp \Gamma_\tau^{-2}v}{a+\tau-v\trsp\Gamma_\tau^{-1}v}\;.
$$
In particular, 
$$
\left|\trace{(A+\tau)^{-1}}-\trace{\Gamma_\tau^{-1}}\right|\leq \frac{1+a/\tau}{\tau}\;.
$$
\end{lemma}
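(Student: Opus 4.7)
My plan is to use the Schur complement formula for the inverse of a $2\times 2$ block matrix, then simply take the trace. Write
\[
A+\tau \id_p = \begin{pmatrix} \Gamma_\tau & v \\ v\trsp & a+\tau \end{pmatrix},
\]
and let $s = a + \tau - v\trsp \Gamma_\tau^{-1} v$ denote the Schur complement of the $\Gamma_\tau$ block. Since $A\succeq 0$ and $\tau>0$, $A+\tau\id_p$ is positive definite, and since the leading block $\Gamma_\tau$ is also positive definite, the Schur complement $s$ must be strictly positive. The standard block inverse formula then gives
\[
(A+\tau \id_p)^{-1} = \begin{pmatrix} \Gamma_\tau^{-1} + \tfrac{1}{s}\Gamma_\tau^{-1} v v\trsp \Gamma_\tau^{-1} & -\tfrac{1}{s}\Gamma_\tau^{-1} v \\ -\tfrac{1}{s} v\trsp \Gamma_\tau^{-1} & \tfrac{1}{s} \end{pmatrix}.
\]

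Taking traces, using $\trace{\Gamma_\tau^{-1} v v\trsp \Gamma_\tau^{-1}} = v\trsp \Gamma_\tau^{-2} v$, yields
\[
\trace{(A+\tau \id_p)^{-1}} = \trace{\Gamma_\tau^{-1}} + \frac{v\trsp \Gamma_\tau^{-2} v}{s} + \frac{1}{s} = \trace{\Gamma_\tau^{-1}} + \frac{1 + v\trsp \Gamma_\tau^{-2} v}{a+\tau - v\trsp \Gamma_\tau^{-1} v},
\]
which is the announced identity.

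For the quantitative bound, the only work is to control the numerator and denominator of the correction term. Because $(A+\tau \id_p)^{-1} \preceq \tau^{-1} \id_p$ (as $A+\tau\id_p \succeq \tau\id_p$), the bottom-right entry $1/s$ is bounded by $1/\tau$, i.e.\ $s \geq \tau$. Rearranging the definition of $s$ gives $v\trsp \Gamma_\tau^{-1} v = a + \tau - s \leq a$, and then, using the operator-monotone bound $\Gamma_\tau^{-1} \preceq \tau^{-1}\id_{p-1}$,
\[
v\trsp \Gamma_\tau^{-2} v \leq \tau^{-1}\, v\trsp \Gamma_\tau^{-1} v \leq \frac{a}{\tau}.
\]
Combining these,
\[
\left|\trace{(A+\tau\id_p)^{-1}} - \trace{\Gamma_\tau^{-1}}\right| = \frac{1 + v\trsp \Gamma_\tau^{-2} v}{s} \leq \frac{1 + a/\tau}{\tau},
\]
completing the proof. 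No step here is truly an obstacle; the only mild subtlety is verifying $s>0$ (and more precisely $s\geq \tau$), which follows directly from the positive definiteness of $A+\tau\id_p$ via the block-inverse identity.
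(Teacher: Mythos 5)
Your proof is correct and follows essentially the same route as the paper: the block-inversion (Schur complement) formula for the identity, then the bounds $s\geq \tau$ and $v\trsp \Gamma_\tau^{-2}v\leq \tau^{-1}v\trsp\Gamma_\tau^{-1}v\leq a/\tau$ for the quantitative estimate. The only (minor, and arguably cleaner) difference is that you obtain $s\geq\tau$ by reading the bottom-right entry of $(A+\tau\id_p)^{-1}\preceq \tau^{-1}\id_p$, whereas the paper derives $a\geq v\trsp\Gamma_\tau^{-1}v$ from the Schur complement of $A$ itself together with a continuity argument to cover the merely semi-definite case.
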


\begin{proof}
The first equation is simply an application of the block inversion formula for matrices (see \cite{hj}, p.18) and the Sherman-Morrison-Woodbury formula (\cite{hj}, p.19). 
Suppose temporarily that $A$ is positive definite. Then the Schur complement formula guarantees that $a\geq v\trsp \Gamma^{-1}v > v\trsp \Gamma_\tau^{-1} v$. The fact that $a\geq v\trsp \Gamma_{\tau}^{-1}v$ in general is obtained by a continuity argument (change $A$ to $A_\eps=A+\eps \id_p$ and let $\eps$ tend to 0). This implies that 
$$
\frac{1}{a+\tau-v\trsp \Gamma_{\tau}^{-1}v}\leq \frac{1}{\tau}\;.
$$
Since $v\trsp \Gamma_\tau^{-2}v\leq \frac{1}{\tau}v\trsp \Gamma_\tau^{-1}v\leq a/\tau$, we get the second equation.
\end{proof}

\bibliographystyle{/Users/nkaroui/Documents/NekTexAuxiliaries/Bibliography/annstats}
\bibliography{/Users/nkaroui/Documents/NekTexAuxiliaries/Bibliography/research}

\end{document}